\numberwithin{equation}{section}
\newtheorem{theorem}{Theorem}[section]
\newtheorem{lemma}[theorem]{Lemma}
\newtheorem{corollary}[theorem]{Corollary}
\newtheorem{proposition}[theorem]{Proposition}
\theoremstyle{definition}
\newtheorem{definition}[equation]{Definition}
\newtheorem{remark}[equation]{Remark}
\begin{document}
	\title[Multi-dimensional Bohr radii of Banach space valued holomorphic functions]
	{Multi-dimensional Bohr radii of Banach space valued holomorphic functions}

	\subjclass[2020]{ 32A05, 32A10, 46B45.}
	\keywords{Bohr radius, Holomorphic function,  Homogeneous Polynomials, Symmetric $M$-linear map, Banach spaces.}
	
	\author[Shankey Kumar]{Shankey Kumar}
	\address{Shankey Kumar, School of Mathematical Sciences, National Institute of Science Education and Research, Bhubaneswar, An OCC of Homi Bhabha National Institute, Jatni 752050, India}
	\email{shankeygarg93@gmail.com}
	
	\author{Ramesh Manna}
	\address{Ramesh Manna, School of Mathematical Sciences, National Institute of Science Education and Research, Bhubaneswar, An OCC of Homi Bhabha National Institute, Jatni 752050, India}
	\email{rameshmanna@niser.ac.in}

	\begin{abstract}
		In this article, we study the multi-dimensional Bohr radii of holomorphic functions defined on the Banach sequence spaces with values in the Banach spaces. For the case of finite dimensional Banach spaces, we exhibit the exact asymptotic growth of the Bohr radius. To achieve our goal in the finite case, we use $\ell_{p'}$-summability of certain coefficients of a given polynomial in terms of its uniform norm on $\ell_p^n$. The infinite case is handled using the techniques developed in recent years from the work of Defant, Maestre and Schwarting. We crucially use several properties of the symmetric $M$-linear mapping associated with a homogeneous polynomial of degree $M$ in our analysis. Furthermore, we study the bounds of the arithmetic Bohr radius of Banach space-valued holomorphic functions defined on the Banach sequence spaces, which generalises the work of Defant, Maestre, and C. Prengel in this direction.
	\end{abstract}
	
	\maketitle
	
	\section{\bf Introduction}
	In the early nineties, Harald Bohr \cite{B13,B14} made a great contribution to the development of a general theory of the Dirichlet series
	$$
	D(s)=\sum_{n\geq 1} \frac{a_n}{n^s}
	$$
	where $s=\sigma+it$ and $a_n\in \mathbb{C}$. Let us fix $\sigma_0 \in \mathbb{R}$. It is known that the Dirichlet series converges absolutely and uniformly in the half-planes given by $[{\rm Re}(s)>\sigma_0]$ in $\mathbb{C}.$ Bohr's work mainly focused on finding the region of convergence of the above series. To tackle this problem, Bohr discovered a deep connection between the Dirichlet series and the Power series in several variables. For each $n\in \mathbb{N}$, with the help of the prime decomposition $n=p_1^{\alpha_1}\cdots p_r^{\alpha_r}$, let $z=(p_1^{-s},\dots,p_r^{-s})$. Bohr invented the following identities:
	$$
	D(s)=\sum_{n\geq 1} a_n (p_1^{-s})^{\alpha_1} \cdots (p_r^{-s})^{\alpha_r}=\sum a_n z_1^{\alpha_1} \cdots z_r^{\alpha_r},
	$$
	where $(p_k)_{k\in \mathbb{N}}$ denotes the sequence formed by the ordered primes. The above correspondence is known as the Bohr transform. The correspondence between the Dirichlet series and power series in infinite variables brought ideas to the mind of Bohr to ask whether it is possible to compare the absolute value of its coefficients. This problem is nowadays referred to as Bohr's inequality. The application of Bohr's inequality in the Banach algebras provided by Dixon \cite{D95}. After this, many mathematicians were motivated and worked in this direction. To see the work done on the topic of  Bohr's inequality, we refer to a survey article by Abu-Muhanna et al. \cite{Abu-M16}.
	
	\medskip
	Aizenberg, Boas, and Khavinson (see, \cite{A00,B2000,BK97}) generalized the work of Bohr radii in a single variable to several variables. In particular, they introduced and study the Bohr radii for some Reinhardt domains. Examples of Reinhardt's domain are the following:
	$$
	B_{\ell_p^n}:=\{z=(z_1,\dots,z_n)\in\mathbb{C}^n:\|z\|_p<1\}, \,\ 1\leq p \leq \infty,
	$$ 
	where the $p$-norm is given by $\|z\|_p=(\sum_{k=1}^{n}|z_k|^p)^{1/p}$ (with the obvious modification for $p=\infty$).
	
	\medskip
	In \cite{DMS12}, Defant et al. studied the Bohr radius of Banach space-valued holomorphic functions defined on the polydisk $B_{\ell_\infty^n}$. This motivates us to give the following definition of the Bohr radius of Banach space-valued holomorphic functions defined on $B_{\ell_p^n}$, $1\leq p\leq \infty$.

	\begin{definition}\label{def1}
		Let $1\leq p\leq \infty$, $\bm{\lambda}>1$, $n\in\mathbb{N}$, $\mathbb{N}_0=\mathbb{N}\cup \{0\}$, and let $\mathcal{V}: X\to Y$ be a bounded (linear) operator between complex Banach spaces $X$ and $Y$. The {\em $\bm{\lambda}$-Bohr radius of} $\mathcal{V}$, denoted by $K(B_{{\ell}_p^n},\mathcal{V},\bm{\lambda})$ is the supremum of all $r\geq0$ such that the following inequality
		\begin{equation}\label{eq1.2}
			\sup_{z\in rB_{\ell_p^n}} \sum_{\alpha\in \mathbb{N}_0^n} \|\mathcal{V}(c_\alpha) z^\alpha\|_Y \leq \bm{\lambda} \sup_{z\in B_{\ell_p^n}} \big\|f(z)\big\|_X
		\end{equation}
		holds for all holomorphic functions $f(z)= \sum_{\alpha\in \mathbb{N}_0^n} c_\alpha z^\alpha
		$ on $B_{\ell_p^n}$ with values in $Y$.
		Here $c_\alpha \in X$ denote the $\alpha_{th}$ coefficient of monomial series expansion of function $f$, and $z^\alpha={z_1}^{\alpha_1}\cdots {z_n}^{\alpha_n}$ for $\alpha=(\alpha_1, \dots,\alpha_n)\in \mathbb{N}_0^n$ (where $0^0$ is interpreted as $1$). We set $K(B_{{\ell}_p^n},\mathcal{V}):=K(B_{{\ell}_p^n},\mathcal{V},1)$. If $\mathcal{V}$ is an identity operator on $X$ we simply write $K(B_{{\ell}_p^n},X,\bm{\lambda})$ and when $X=\mathbb{C}$,   we use the notation $K(B_{{\ell}_p^n},\bm{\lambda})$. 
	\end{definition}

	Throughout this paper, we will assume that the holomorphic function $f$, defined on $B_{{\ell}_p^n}$, $1\leq p\leq \infty$, is bounded because if $f$ is an unbounded function on $B_{{\ell}_p^n}$, $1\leq p\leq \infty$, then inequality \eqref{eq1.2} is trivial holds.
	
	\medskip 
	A remarkable result by Bohr \cite{B14} states that 
	$$
	K(\mathbb{D})=\frac{1}{3},
	$$
	where $\mathbb{D}:=B_{\ell_p^1}$ denotes the open unit disk.
	More precisely, Bohr’s \cite{B14}, first obtained the radius $1/6$, and later his colleagues Riesz, Schur, and Wiener, independently improved up to $1/3$ and showed that the  radius $1/3$ was sharp.
	
	\medskip

	In \cite{BB04,Bombieri62}, Bombieri and Bourgain has been extended the result of Bohr and studied the general Bohr radius $K(\mathbb{D},\bm{\lambda})$ in terms of $\bm{\lambda} \geq 1$. More precisely, they obtained that
	$$
	K(\mathbb{D},\bm{\lambda})=\frac{1}{3\bm{\lambda}-2\sqrt{2(\bm{\lambda}^2-1)}}, \,\ \mbox{ for all } 1\leq \bm{\lambda} \leq \sqrt{2},
	$$
	and, for $\bm{\lambda}$ close to $\infty$,
	$$
	K(\mathbb{D},\bm{\lambda})\sim\frac{\sqrt{\bm{\lambda}^2-1 }}{\bm{\lambda}}.
	$$
	On the other hand Blasco in \cite[Theorem 1.2]{Blasco} observed that 
	$$
	K(\mathbb{D}, \ell_p^2,1)=0, \mbox{ for every } 1\leq p\leq \infty.
	$$ 
	The above fact force us to consider $\bm{\lambda}>1$ in our definition of the Banach space valued Bohr radii $K(B_{{\ell}_p^n},\mathcal{V},\bm{\lambda})$. In fact, in this article we show that $K(B_{{\ell}_p^n},\mathcal{V},\bm{\lambda})>0$, for every bounded operator $\mathcal{V}:X\to Y$ with $\|\mathcal{V}\|<\bm{\lambda}$ and $\bm{\lambda}>1$.
	
	\medskip
	In the direction of the multi-dimensional Bohr radius Aizenberg, Boas, Dineen, Khavinson, Timoney, Defant and Frerick (see, \cite{A00,B2000,BK97,DF06,DT89}) determined that there is a constant $C\geq 1$ such that the following inequalities hold
	$$
	\frac{1}{C} \Bigg(\frac{\log n}{n\log \log n}\Bigg)^{1-\frac{1}{\min\{p,2\}}}\leq K(B_{{\ell}_p^n}) \leq C \Bigg(\frac{\log n}{n}\Bigg)^{1-\frac{1}{\min\{p,2\}}},
	$$
	for all $1\leq p \leq \infty$ and all $n$.
	Dineen and Timoney in \cite{DT89} renewed the interest in
	multi-dimensional Bohr radii during the study of absolute bases in connection with the spaces of holomorphic functions on
	infinite-dimensional spaces. Khavinson and Boas \cite{BK97} took this work and study the multi-dimensional Bohr radii. They found the above upper bound of
	$K(B_{{\ell}_\infty^n})$ and other interesting results.
	Later, for the remaining cases of $p$, the upper bound of $K(B_{{\ell}_p^n})$ was estimated by Boas in \cite{B2000} by generalising a theorem of Kahane-Salem-Zygmund on random trigonometric polynomials \cite{Kah85}. Further by establishing the link between the multi-dimensional Bohr radii and local Banach space theory (see \cite{DGM03}), Defant and Frerick \cite{DF06} obtained the lower bound of $K(B_{{\ell}_p^n})$.

	\medskip
	The following incredible improvement of the above result is given by Defant and Frerick \cite{DF11} and they provided the existence of a constant $C\geq 1$ such that 
	\begin{equation}\label{eq1.3}
		\frac{1}{C} \Bigg(\frac{\log n}{n}\Bigg)^{1-\frac{1}{\min\{p,2\}}}\leq K(B_{{\ell}_p^n}),
	\end{equation}
	for all $1\leq p \leq \infty$ and all $n$. 
	The important case $p=\infty$ is a consequence of a celebrated fact that ``the Bohnenblust-Hille inequality for homogeneous polynomials is hypercontractive" and was proved in \cite[Theorem 2]{DFOOS11}. Then the case $2\leq p \leq \infty$ was managed using the inequality $K(B_{{\ell}_\infty^n})\leq K(B_{{\ell}_p^n})$ which is true for all $1\leq p \leq \infty$.
	The range $1\leq p<2$ required sophisticated methods from symmetric tensor products and local
	Banach space theory.
	
	\medskip
	The gap between the upper and lower estimates in all of the above papers, as we mentioned, led to many
	efforts to compute the exact asymptotic order of $K(B_{{\ell}_p^n})$ for $1\leq p\leq \infty$. Many mathematical areas such as complex analysis \cite{BDS19,DMP09,LP21}, number theory \cite{BB04,CDGMS15,DFOOS11}, and operator algebra \cite{D95,PPS02} have been enriched during the study of the Bohr radius. The Bohr radius of functions on the Boolean cube was studied in \cite{DMP18,DMP19}. In \cite{GMM2020}, Galicer et al. determined the exact asymptotic growth of the mixed
	$(p,q)$-Bohr radius as $n$ (the number of variables) tends to infinity, for every $1 \leq p \leq q \leq \infty$. Furthermore, many authors have added their contributions to expand the theory of multi-dimensional Bohr's radius (see, e.g., \cite{A05,A07,AAD00,A22,AK22,B2002,BCGMMS21,HHK09,KM22}).

	\medskip
	In \cite[Theorem 4.1]{DMS12}, Defant et al. estimated the asymptotic of $K(B_{{\ell}_\infty^n},X,\bm{\lambda})$, for the finite-dimensional 
	Banach space $X$,
	which is exactly like in \eqref{eq1.3} for $p=\infty$.
	Also, they studied the asymptotic of $K(B_{{\ell}_\infty^n},X,\bm{\lambda})$ for infinite dimensional Banach space $X$ in \cite[Theorem 4.2]{DMS12}. Moreover, they determined the Bohr radii of operators mainly for embeddings in Banach sequence spaces and for bounded operators on some particular type of Banach sequence spaces, like cotype and concave Banach spaces (definitions given in the preliminaries section). All these results are the motivation for this paper. 
	
	\medskip
	
	Our primary interest in this paper is to establish upper and lower estimates for Bohr radii $K(B_{{\ell}_p^n},\mathcal{V},\bm{\lambda})$ of particular operators $\mathcal{V}$ between complex Banach spaces. In general, for every $\|\mathcal{V}\|<\bm{\lambda}$ and $1\leq p \leq \infty$, we determined the following estimates
	$$
	\frac{1}{n^{1-\frac{1}{p}}} \lesssim K(B_{{\ell}_p^n},\mathcal{V},\bm{\lambda}) \lesssim \Bigg(\frac{\log n}{n}\Bigg)^{1-\frac{1}{\min\{p,2\}}}.
	$$
	In particular, for the finite-dimensional Banach space, we determine the following exact asymptotic  estimate of the Bohr radii.
	\begin{theorem} \label{maintheorem}
		Let $\bm{\lambda}>1$. Suppose $X$ is a finite-dimensional Banach space. Then there are constants $C,B>0$ such that the following inequalities
		$$
		C \frac{\bm{\lambda}-1}{2\bm{\lambda}-1} \Bigg(\frac{\log n}{n}\Bigg)^{1-\frac{1}{\min\{p,2\}}}\leq K(B_{{\ell}_p^n},X,\bm{\lambda}) \leq B \bm{\lambda}^2 \Bigg(\frac{\log n}{n}\Bigg)^{1-\frac{1}{\min\{p,2\}}}
		$$
		hold for $n\in \mathbb{N}$ and each $1\leq p \leq \infty$. Here $B$ denotes a universal constant and the constant $C$ depends only on $X$.
	\end{theorem}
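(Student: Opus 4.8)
The plan is to prove the two inequalities separately, exploiting that a finite-dimensional $X$ has finite cotype with cotype constant depending only on $X$; this is what allows us to pass from the $X$-valued supremum norm to $\ell_s$-summability of the coefficients. In both directions I would first decompose a bounded holomorphic $f=\sum_{m\ge 0}P_m$ into its $m$-homogeneous parts $P_m=\sum_{|\alpha|=m}c_\alpha z^\alpha$, using the Cauchy estimate $\|P_m\|_\infty\le\|f\|_\infty$ on $B_{\ell_p^n}$. Since for an $m$-homogeneous polynomial one has $\sup_{z\in rB_{\ell_p^n}}\sum_{|\alpha|=m}\|c_\alpha\|\,|z^\alpha|=r^m\sum_{|\alpha|=m}\|c_\alpha\|\sup_{w\in B_{\ell_p^n}}|w^\alpha|$, the whole problem reduces to estimating, for each $m$, the quantity $A_m:=\sum_{|\alpha|=m}\|c_\alpha\|_X\,\sup_{w\in B_{\ell_p^n}}|w^\alpha|$ and then summing the series $\sum_m r^m A_m$.

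For the lower bound the key analytic input is the computation $\sup_{\|w\|_p\le 1}|w^\alpha|=m^{-m/p}\prod_k\alpha_k^{\alpha_k/p}\asymp(\alpha!/m!)^{1/p}$, obtained by Lagrange multipliers and Stirling's formula (and equal to $1$ when $p=\infty$). By Hölder's inequality with the Bohnenblust--Hille exponent,
\[
A_m\le\Big(\sum_{|\alpha|=m}\|c_\alpha\|_X^{\,s}\Big)^{1/s}\Big(\sum_{|\alpha|=m}\big(\sup_{w}|w^\alpha|\big)^{s'}\Big)^{1/s'}.
\]
I would bound the first factor by the \emph{hypercontractive, vector-valued} Bohnenblust--Hille inequality $\big(\sum_{|\alpha|=m}\|c_\alpha\|_X^{\,s}\big)^{1/s}\le C_X^{\,m}\|P_m\|_\infty$, where the constant grows only geometrically in $m$ and depends on $X$ solely through its cotype; here the $\ell_{p'}$-summability lemma for coefficients of polynomials on $\ell_p^n$ is what supplies the correct exponent in place of $s$ when $p<\infty$, and the symmetric $M$-linear map attached to $P_m$ is used to transfer summability from the multilinear to the polynomial coefficients. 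The second factor is a purely combinatorial sum of the weights $(\alpha!/m!)^{1/p}$; evaluating it produces exactly the exponent $1-\tfrac1{\min\{p,2\}}$ once one separates $2\le p\le\infty$ from $1\le p<2$. Combining the two factors gives $A_m\le(D_X/K_n)^m\|f\|_\infty$ with $K_n\asymp(\log n/n)^{1-1/\min\{p,2\}}$; choosing $r=\theta K_n/D_X$ turns $\sum_{m\ge1}r^mA_m$ into a convergent geometric series, and a careful split between the degree-zero term $\|c_0\|\le\|f\|_\infty$ and the homogeneous tail yields the admissible radius with the stated prefactor $\frac{\bm{\lambda}-1}{2\bm{\lambda}-1}$.

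For the upper bound I would first reduce to the scalar case: fixing a unit vector $x_0\in X$ and applying the construction to $f(z)=g(z)x_0$ gives $\|f\|_\infty=\|g\|_\infty$ and identical Bohr sums, hence $K(B_{\ell_p^n},X,\bm{\lambda})\le K(B_{\ell_p^n},\bm{\lambda})$. For the scalar estimate I would invoke a Kahane--Salem--Zygmund-type construction: for $m\asymp\log n$ there exist unimodular signs $(\varepsilon_\alpha)$ for which the $m$-homogeneous polynomial $P(z)=\sum_{|\alpha|=m}\varepsilon_\alpha\binom{m}{\alpha}^{1/2}z^\alpha$ has $\|P\|_{\infty,B_{\ell_p^n}}$ of the expected order $\sqrt{m\log m}$ times the correct power of $n$, while its Bohr sum $\sum_{|\alpha|=m}|c_\alpha|\sup_w|w^\alpha|$ is explicitly large. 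The failure of \eqref{eq1.2} then forces $r\gtrsim\big(\bm{\lambda}\,\|P\|_\infty/\sum|c_\alpha|\sup_w|w^\alpha|\big)^{1/m}$, and optimizing the construction while tracking the $\bm{\lambda}$ arising both from the threshold and from the freedom to adjoin a constant term yields $B\bm{\lambda}^2(\log n/n)^{1-1/\min\{p,2\}}$.

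The main obstacle is the first factor in the lower bound: establishing the hypercontractive, vector-valued Bohnenblust--Hille inequality on $\ell_p^n$ with a constant of the form $C_X^{\,m}$, geometric in $m$ and depending only on the cotype of $X$, and with the exponent replaced by the $\ell_{p'}$-summability exponent for $1\le p<\infty$. Keeping the $m$-dependence geometric is essential, since a super-exponential constant would destroy the convergence of $\sum_m r^m A_m$ at the claimed radius; and the passage between $1\le p<2$ and $2\le p\le\infty$, handled through different summation and symmetric-tensor arguments, is precisely where the $\min\{p,2\}$ in the exponent is produced.
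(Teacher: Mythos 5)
Your upper bound is fine in outline (the reduction $K(B_{\ell_p^n},X,\bm{\lambda})\le K(B_{\ell_p^n},\bm{\lambda})$ via $f=g\,x_0$ is exactly the paper's first step; the paper then quotes the arithmetic-Bohr-radius estimates of Defant--Maestre--Prengel instead of redoing Kahane--Salem--Zygmund, but either route works), and your homogeneous decomposition with the prefactor $\frac{\bm{\lambda}-1}{2\bm{\lambda}-1}$ is the paper's Lemma \ref{4Klemma1}. The genuine gap is the key step of your lower bound: the unweighted inequality $\bigl(\sum_{|\alpha|=m}\|c_\alpha\|_X^{s}\bigr)^{1/s}\le C_X^m\|P_m\|_{\mathcal{P}(\prescript{m}{}{\ell_p^n})}$ is \emph{false} for every finite $p$ and every exponent $s$, even $s=\infty$. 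Indeed, for $1\le p\le 2$ and $m$ even take $P(z)=(z_1^2+\cdots+z_n^2)^{m/2}$ with $n\ge m/2$: then $\|P\|_{B_{\ell_p^n}}\le\|z\|_2^{m}\le 1$, yet the coefficient of $z_{i_1}^2\cdots z_{i_{m/2}}^2$ (distinct indices) equals $(m/2)!$, which is super-exponential in $m$; for $2<p<\infty$ the same happens with $(z_1^q+\cdots+z_n^q)^{m/q}$ for an even integer $q\ge p$. (This does not contradict the polydisc Bohnenblust--Hille inequality, since on $B_{\ell_\infty^n}$ these polynomials have norm $n^{m/2}$, not $1$.) The moral is that for $p<\infty$ the weights $\sup_{w\in B_{\ell_p^n}}|w^\alpha|$ cannot be separated from the coefficients by a single global application of H\"older: they must live \emph{inside} the summability statement. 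That is precisely what the paper's key tool, Lemma \ref{4Klemma3} (Bayart--Defant--Schl\"uters), provides -- a slice-wise bound $\bigl(\sum_k|c_{({\bf j},k)}|^{p'}\bigr)^{1/p'}\le m\,e^{1+\frac{m-1}{p}}|{\bf j}|^{1/p}\|P\|$ carrying the multinomial weight $|{\bf j}|^{1/p}$ -- and the Bohr sum is then assembled by splitting $\mathcal{J}(m,n)$ into slices over $\mathcal{J}(m-1,n)$ and applying H\"older \emph{twice}, not once. Your appeal to ``the $\ell_{p'}$-summability lemma'' acknowledges this lemma's existence but uses it as if it gave full unweighted summability, which it does not.

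A second, related error is your claim that the vector-valued constant ``depends on $X$ solely through its cotype.'' Finite cotype cannot be the operative hypothesis: if it were, your argument would prove the theorem for infinite-dimensional cotype-$2$ spaces such as $\ell_2$, contradicting the paper's Theorem \ref{Bqcotype}, which gives $K(B_{\ell_p^n},X,\bm{\lambda})\le \bm{\lambda}\,n^{-(1-1/\mathrm{Cot}(X))}$ with no logarithmic factor. What the paper actually uses is that the identity of a finite-dimensional space is $1$-summing (hence $s$-summing for all $s\ge 1$, with constant $\pi_1(I_X)$); this strictly finite-dimensional property is the mechanism that transfers the scalar slice lemma to $X$-valued coefficients, and it is where the constant $C$ acquires its dependence on $X$. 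Finally, note that even after repairing the summability step, the slice argument only yields the exponent $1-\frac1p$, which is sharp solely for $p\le 2$; the range $2\le p\le\infty$ is obtained in the paper from the monotonicity $K_m(B_{\ell_\infty^n},X,\bm{\lambda})\le K_m(B_{\ell_p^n},X,\bm{\lambda})$ together with the known polydisc bound (Lemma \ref{4Klemma2}), a reduction your proposal gestures at (``different summation and symmetric-tensor arguments'') but never makes precise.
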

	The bounds of the Bohr radii for the general bounded linear operator on $\ell_1$ space to $\ell_q$, $1\leq q< \infty$, we provide in the following theorem.
	\begin{theorem} \label{th1.2}
		Let $1\leq p \leq \infty$ and  $1\leq q<\infty$. Suppose $\mathcal{V}:\ell_1 \to \ell_q$ is any bounded operator. Then, we have
		$$
		\bigg( \frac{\log n}{n} \bigg)^{1-\frac{1}{\max\{2,q\}}}\lesssim K(B_{{\ell}_p^n},\mathcal{V},\bm{\lambda}) \lesssim \Bigg(\frac{\log n}{n}\Bigg)^{1-\frac{1}{\min\{p,2\}}},
		$$
		with constants only depending on $\bm{\lambda}$ and $\mathcal{V}.$
	\end{theorem}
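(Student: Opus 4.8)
The plan is to prove the two inequalities by separate mechanisms. For the \emph{upper bound} I would restrict the test functions in Definition~\ref{def1} to a one-dimensional range. Fix any $x\in\ell_1$ with $\mathcal V(x)\neq0$ and consider only $f(z)=g(z)\,x$ with $g$ scalar-valued and holomorphic on $B_{\ell_p^n}$; then $c_\alpha=\widehat g(\alpha)\,x$, so $\|\mathcal V(c_\alpha)\|_q=|\widehat g(\alpha)|\,\|\mathcal V(x)\|_q$ and $\|f(z)\|_{\ell_1}=|g(z)|\,\|x\|_1$, whence \eqref{eq1.2} collapses to the scalar $\bm\mu$-Bohr inequality with $\bm\mu=\bm\lambda\,\|x\|_1/\|\mathcal V(x)\|_q$. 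This gives $K(B_{\ell_p^n},\mathcal V,\bm\lambda)\le K(B_{\ell_p^n},\bm\mu)$, and the scalar upper estimate $K(B_{\ell_p^n},\bm\mu)\lesssim_{\bm\mu}(\log n/n)^{1-1/\min\{p,2\}}$ (the general upper bound of the Introduction, specialized to $X=\mathbb C$, valid for every $\bm\mu$ after enlarging $\bm\mu$ to $\max\{\bm\mu,2\}$ if needed) yields the claimed upper bound with a constant depending only on $\bm\lambda$ and $\mathcal V$.

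For the \emph{lower bound} I would first pass to the polydisk: since $B_{\ell_p^n}\subseteq B_{\ell_\infty^n}$, the monotonicity $K(B_{\ell_\infty^n},\mathcal V,\bm\lambda)\le K(B_{\ell_p^n},\mathcal V,\bm\lambda)$ reduces everything to bounding $K(B_{\ell_\infty^n},\mathcal V,\bm\lambda)$ from below by $(\log n/n)^{1-1/s}$, where $s:=\max\{2,q\}$ is the cotype of $\ell_q$. The key point is that, although the source $\ell_1$ has only cotype $2$, pushing $f$ forward through $\mathcal V$ lands in $\ell_q$ and lets the better cotype $s$ enter. Concretely, given $f=\sum_\alpha c_\alpha z^\alpha$ with $\|f\|_\infty\le1$, I decompose $f=\sum_{m\ge0}P_m$ into homogeneous parts, note $\|P_m\|_\infty\le\|f\|_\infty\le1$ by the maximum principle, and apply to the $\ell_q$-valued $m$-homogeneous polynomial $\mathcal V\circ P_m=\sum_{|\alpha|=m}\mathcal V(c_\alpha)z^\alpha$ the hypercontractive vector-valued Bohnenblust--Hille inequality (passing, as the abstract indicates, through the associated symmetric $m$-linear map and polarizing, at a cost of at most $e^m$), which together with $\|\mathcal V\circ P_m\|_\infty\le\|\mathcal V\|\,\|P_m\|_\infty$ yields
$$
\Big(\sum_{|\alpha|=m}\|\mathcal V(c_\alpha)\|_q^{\rho_s}\Big)^{1/\rho_s}\ \le\ C^{\,m}\,\|\mathcal V\|\,\|P_m\|_\infty,
$$
where $\rho_s$ is the cotype-$s$ exponent, with $\rho_s\to s$ as $m\to\infty$, and $C$ an absolute constant.

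With this coefficient bound, Hölder's inequality gives, for $z\in rB_{\ell_\infty^n}$,
$$
\sum_{|\alpha|=m}\|\mathcal V(c_\alpha)\|_q\,|z^\alpha|\ \le\ C^{\,m}\,\|\mathcal V\|\,\binom{n+m-1}{m}^{1/\rho_s'}r^{m},
$$
using $|z^\alpha|\le r^m$ on the polydisk of radius $r$ and $1/\rho_s'=1-1/\rho_s\to1-1/s$. Summing over $m$, the interplay between the combinatorial factor and the geometric factor $(Cr)^m$ is controlled by the critical degrees $m\asymp\log n$, and optimizing exactly as in the scalar Bohnenblust--Hille/Bohr computation shows the whole series stays below $\bm\lambda$ once $r\le c\,(\log n/n)^{1-1/s}$, the slack $\bm\lambda>1$ absorbing the low-degree terms and the tail; by the monotonicity above this is the desired lower bound for all $p$. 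The \textbf{main obstacle} is securing the displayed Bohnenblust--Hille inequality with a \emph{geometric} constant $C^m$ and the correct cotype exponent $\rho_s$ of $\ell_q$: one must verify that neither the (dimension-free) cotype constant of $\ell_q$ nor the polarization from $P_m$ to its symmetric $m$-linear form spoils the $m$-growth, since only a geometric constant keeps the $m\asymp\log n$ terms in check and preserves the logarithmic factor. Once this is established, the reductions and the optimization over $r$ are routine.
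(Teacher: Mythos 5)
Your upper bound is correct and is essentially the paper's own route: the paper likewise reduces to the scalar case (the observation $K(B_{{\ell}_p^n},\mathcal{V},\bm{\lambda})\leq K(B_{{\ell}_p^n},\mathbb{C},\bm{\lambda}')$, which your rank-one test functions $f=g\,x$ prove, with $\bm{\lambda}'$ depending only on $\bm{\lambda}$ and $\mathcal{V}$) and then invokes the scalar estimate of Theorem \ref{maintheorem} and \cite{DMP08}. Your polydisk monotonicity $K(B_{{\ell}_\infty^n},\mathcal{V},\bm{\lambda})\leq K(B_{{\ell}_p^n},\mathcal{V},\bm{\lambda})$ is also a correct and standard reduction (cf.\ the substitution argument in Lemma \ref{4Klemma2}).

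The lower bound, however, breaks at precisely the step you flag as the ``main obstacle,'' and the problem is not one of tracking polarization constants: the inequality you invoke is false. Your derivation uses only two facts about $\mathcal{V}\circ P_m$, namely that its target $\ell_q$ has cotype $s=\max\{2,q\}$ and that $\|\mathcal{V}\circ P_m\|_\infty\leq\|\mathcal{V}\|\,\|P_m\|_\infty$; hence what you are really claiming is that \emph{every} $\ell_q$-valued $m$-homogeneous polynomial $Q=\sum_{|\alpha|=m}d_\alpha z^\alpha$ satisfies $\big(\sum_\alpha\|d_\alpha\|_q^{\rho_s}\big)^{1/\rho_s}\leq C^m\|Q\|_\infty$ with a geometric constant and $\rho_s\to s$. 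If that held, your own optimization would give $K(B_{{\ell}_\infty^n},\ell_q,\bm{\lambda})\gtrsim(\log n/n)^{1-1/s}$, contradicting the upper bound $K(B_{{\ell}_\infty^n},\ell_q,\bm{\lambda})\leq \bm{\lambda}\,n^{-(1-1/s)}$ of Theorem \ref{Bqcotype} (equivalently \cite[Theorem 4.2]{DMS12}), since $\mathrm{Cot}(\ell_q)=s$ and $(\log n)^{1-1/s}\to\infty$. (For $q\geq2$ one can also see the failure directly: $Q(z)=\sum_{k=1}^n z_k^m e_k$ has $\|Q\|_\infty=n^{1/q}$ but coefficient $\rho_s$-norm $n^{1/\rho_s}$ with $\rho_s<q$.) What cotype alone yields is the multilinear inequality \eqref{KMeq3.3} of Bombal et al., whose exponent is exactly $s$ with no $1/m$-improvement; feeding exponent $s$ into your H\"older/counting step makes the constraint tightest at $m=1$ and produces only $n^{-(1-1/s)}$, i.e.\ the logarithm is lost. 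The missing idea is the specific structure of the domain $\ell_1$: by Kwapie\'n's theorem \cite{K68}, every bounded $\mathcal{V}:\ell_1\to\ell_q$ is $(r,1)$-summing with $1/r=1-|1/q-1/2|$, and it is the \emph{pair} ($(r,1)$-summing operator, $\max\{2,q\}$-concave lattice) --- not the target space alone --- that gives the hypercontractive coefficient estimate of Theorem \ref{Oqconcave} (i.e.\ \cite[Theorem 5.3]{DMS12}), with exponent $\rho=\frac{srm}{s+(m-1)r}$ whose $O(1/m)$ defect from $s$ is exactly what controls the critical degrees $m\asymp\log n$. This is how the paper argues, via Theorem \ref{KMthe3.3}(2); your proposal uses no property of $\mathcal{V}$ beyond boundedness, and no such argument can recover the $\log$ factor.
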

	It is worth here to note that the proof of the above theorem fundamentally uses  the techniques and
	results developed in the last decade. In particular, one of the key arguments for these results is a polynomial reformulation of cotype and concavity.  In fact, the above result follows from the following more general result (for the definitions of cotype and concavity see Section 2.):
	
	\begin{theorem}\label{KMthe3.31}
		Suppose $\mathcal{V}:X \to Y$ is a bounded operator between Banach spaces $X$ and $Y$ with $\|\mathcal{V}\|<\bm{\lambda}$. Then the following holds.
		\begin{enumerate}
			\item Suppose that $X$ or $Y$ is of cotype $q$ with $2\leq q \leq \infty$. Then for every $n$
			$$
			K(B_{{\ell}_p^n},\mathcal{V},\bm{\lambda})\gtrsim \begin{cases}\vspace{0.1cm}
				1,
				& \text{if $p\leq r$,}\\\vspace{0.2cm}
				\cfrac{1}{ n^{\frac{1}{r}-\frac{1}{p}}},
				& \text{if $r<p$}.
			\end{cases}
			$$
			where $r=q/(q-1)$.
			\item Let $2\leq q \leq \infty$. Assume that $Y$ is a $q$-concave Banach lattice and there is a $r \in [1,q)$ such that the operator $\mathcal{V}$ is $(r,1)$-summing. Then, for every $n$
			$$
			K(B_{{\ell}_p^n},\mathcal{V},\bm{\lambda})\gtrsim  \bigg( \frac{\log n}{n} \bigg)^{1-\frac{1}{q}}.
			$$
		\end{enumerate}
	\end{theorem}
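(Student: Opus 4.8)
The plan is to treat both parts within a single scheme and to let the structural hypotheses (cotype in part (1); concavity together with $(r,1)$-summability in part (2)) enter only through a coefficient inequality for homogeneous polynomials. Writing $f=\sum_{m\ge 0}P_m$ with $P_m(z)=\sum_{|\alpha|=m}c_\alpha z^\alpha$ its $m$-homogeneous part, a Cauchy integral over the circles $\{wz_0:|w|\le 1\}\subset B_{\ell_p^n}$ gives $\|P_m\|_{B_{\ell_p^n}}\le \|f\|_{B_{\ell_p^n}}$, so it suffices to bound each $\sup_{w\in B_{\ell_p^n}}\sum_{|\alpha|=m}\|\mathcal V(c_\alpha)\|\,|w^\alpha|$ by $A^m\|f\|$ for a suitable $A$ and then to sum a geometric-type series in $r^m$. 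The degree-zero term contributes $\|\mathcal V(c_0)\|\le \|\mathcal V\|\,\|f\|$, and since $\|\mathcal V\|<\bm{\lambda}$ there remains room $\bm{\lambda}-\|\mathcal V\|>0$ to absorb the tail $m\ge 1$; this is exactly what converts summability of the series into a quantitative lower bound for the radius.

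For part (1) I would pair the point $w$ against the coefficient array slotwise. For fixed $w\in B_{\ell_p^n}$, Hölder's inequality with the conjugate exponents $p,p'$, applied in each of the $m$ arguments of the symmetric $m$-linear form associated with $P_m$, reduces $\sum_{|\alpha|=m}\|\mathcal V(c_\alpha)\|\,|w^\alpha|$, up to the harmless factor $\|w\|_p^m\le 1$, to a mixed $\ell_{p'}$-norm of the coefficients $(\|\mathcal V(c_\alpha)\|)_\alpha$. The decisive point is then the $\ell_{p'}$-summability estimate $\big\|(\|\mathcal V(c_\alpha)\|)\big\|_{\ell_{p'}}\le A^m\|P_m\|_{B_{\ell_p^n}}$: cotype $q$ of $X$ (or of $Y$) yields an $\ell_q$ coefficient estimate in terms of the polynomial norm, and since $r=q'$ one has $p'\ge q\iff p\le r$. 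Thus for $p\le r$ no dimensional constant appears, giving $A\lesssim 1$ and hence $K\gtrsim 1$; for $p>r$ one has $p'<q$, and passing from $\ell_q$ to $\ell_{p'}$ costs a factor $n^{1/p'-1/q}=n^{1/r-1/p}$ per degree, so the series $\sum_m (rA\,n^{1/r-1/p})^m$ is summable precisely when $r\lesssim n^{-(1/r-1/p)}$, which is the claimed bound.

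For part (2) I would first reduce to $p=\infty$: since $B_{\ell_p^n}\subset B_{\ell_\infty^n}$ one has $K(B_{\ell_\infty^n},\mathcal V,\bm{\lambda})\le K(B_{\ell_p^n},\mathcal V,\bm{\lambda})$, so it suffices to treat the polydisk, where $|w^\alpha|\le 1$ and the degree-$m$ Bohr sum is $\Sigma_m=\sum_{|\alpha|=m}\|\mathcal V(c_\alpha)\|$. The key input is a hypercontractive Bohnenblust--Hille inequality for $\mathcal V$: combining its $(r,1)$-summability (the base case) with the $q$-concavity of $Y$ one obtains $\big(\sum_{|\alpha|=m}\|\mathcal V(c_\alpha)\|^{\rho_m}\big)^{1/\rho_m}\le D_m\|P_m\|_\infty$ with exponents $\rho_m\uparrow q$ (so that $1-1/\rho_m\to 1-1/q$) and, crucially, subexponential constants $D_m$. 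Hölder against $N_m=\binom{n+m-1}{m}\approx(en/m)^m$ gives $\Sigma_m\le D_m\,N_m^{\,1-1/\rho_m}\|f\|$ with $N_m^{\,1-1/\rho_m}\approx(en/m)^{(1-1/q)m}$. Substituting $r=\kappa(\log n/n)^{1-1/q}$ makes the base $r(en/m)^{1-1/q}\approx\kappa(e\log n/m)^{1-1/q}$ of order one at the critical degree $m\approx\log n$; since $D_m^{1/m}\to 1$ and $(en/m)$ decays for $m\gg\log n$, the series converges for small $\kappa$, yielding $K\gtrsim(\log n/n)^{1-1/q}$.

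The main obstacle in both parts is the coefficient inequality, not the summation. In part (1) it is the genuinely $\ell_p$-adapted $\ell_{p'}$-summability estimate, which must be proved through the symmetric $m$-linear form and polarization while tracking the constant as $A^m$ and, importantly, avoiding the naive comparison $\|P_m\|_{\mathbb D^n}\le n^{m/p}\|P_m\|_{B_{\ell_p^n}}$ that would destroy the bound. In part (2) the hard point is hypercontractivity: one needs the Bohnenblust--Hille constants $D_m$ to grow subexponentially in $m$, since a geometric growth $c^m$ with $c>1$ would only reproduce the weaker part-(1) bound $n^{-(1-1/q)}$ and lose the logarithmic gain. Establishing that subexponential control in the operator/lattice setting, via the multiple $(r,1)$-summing norms and the concavity of $Y$, is the technical heart of the argument.
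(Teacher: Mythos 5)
Your part (1) is essentially the paper's own argument. The paper reduces to the identity operator via $K(B_{\ell_p^n},\mathcal{V},\bm{\lambda})\ge\max\{K(B_{\ell_p^n},X,\bm{\lambda}/\|\mathcal{V}\|),\,K(B_{\ell_p^n},Y,\bm{\lambda}/\|\mathcal{V}\|)\}$ and then, in Theorem \ref{Bqcotype}, expands $\sum_{|\alpha|=m}\|c_\alpha z^\alpha\|$ as a sum over the index set $\mathcal{M}(m,n)$ of the symmetric $m$-linear form $A$, applies H\"older with the pair $(q,r)$, Bombal's cotype inequality \eqref{KMeq3.3}, and polarization $\|A\|\le e^m\|P\|$, finishing with Lemma \ref{4Klemma1}. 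Your H\"older with $(p,p')$ followed by comparison of $\ell_{p'}$ with $\ell_q$ (cost $n^{m(1/r-1/p)}$ when $p>r$) is the same computation with the two H\"older steps rearranged, and your absorption of the tail using $\bm{\lambda}-\|\mathcal{V}\|>0$ is exactly the content of Lemma \ref{4Klemma1}. The multiplicity issue you flag is resolved, as in the paper, by working over $\mathcal{M}(m,n)$ rather than $\Lambda(m,n)$.

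Part (2) contains a genuine gap: you misidentify the mechanism that produces the logarithm, and the summation step as you describe it fails. Write the coefficient inequality as $\big(\sum_{|\alpha|=m}\|\mathcal{V}(c_\alpha)\|^{\rho_m}\big)^{1/\rho_m}\le D_m\|P_m\|$. You keep only the qualitative information $\rho_m\uparrow q$, replace $N_m^{1-1/\rho_m}$ by $(en/m)^{(1-1/q)m}$, and require $D_m^{1/m}\to1$. With these data the degree-$m$ term of the Bohr series at radius $\kappa(\log n/n)^{1-1/q}$ is of order $D_m\,\kappa^m(e\log n/m)^{(1-1/q)m}$, which is \emph{not} small for $1\le m\lesssim\log n$, no matter how small the constant $\kappa>0$ is: already at $m=1$ it is $\approx D_1\,\kappa\,(e\log n)^{1-1/q}\to\infty$ as $n\to\infty$. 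Indeed, with exponent exactly $q$ and even $D_m\equiv1$ one can only conclude $K\gtrsim n^{-(1-1/q)}$. What actually produces the gain --- and what the paper uses through Theorem \ref{Oqconcave}, obtained from \cite[Theorem 5.3]{DMS12} composed with the diagonal operator $D_z:\ell_\infty^n\to\ell_p^n$ (so your reduction to the polydisk is the same device, applied at a different level) --- is the \emph{quantitative deficiency} of the exponent: $\rho_m=\frac{qrm}{q+(m-1)r}$ gives $1-\frac1{\rho_m}=\big(1-\frac1q\big)-\frac{q-r}{qrm}$, hence $N_m^{1-1/\rho_m}\approx\big(e(1+n/m)\big)^{(1-1/q)m}\cdot\big(e(1+n/m)\big)^{-\frac{q-r}{qr}}$. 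The second factor is a saving which is polynomial in $n$ precisely at the small degrees $m\lesssim\log n$, and it is exactly what absorbs the offending $(\log n)^{(1-1/q)m}$ there; note $\rho_1=r<q$, which is where the strict hypothesis $r<q$ enters.

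Consequently your closing diagnosis is backwards. Geometric constants are harmless: the paper's Theorem \ref{Oqconcave} has constants $C^m$, and since $K_m$ involves an $m$-th root these only cost a fixed factor $1/C$, leaving the logarithm intact (this is also why Theorem \ref{KMthe3.3}(2) goes through). Conversely, subexponential constants $D_m$ without an exponent deficiency of order $1/m$ do not give the logarithmic gain at all, by the $m=1$ computation above. So the technical heart is not hypercontractivity of the constants but the Bohnenblust--Hille-type interpolation of exponents between $r$ at $m=1$ and $q$ at $m=\infty$ at the rate $1/m$.
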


	We illustrate this general result regarding the coefficient map of the power series with an example where the coefficient map is given by the Fourier transform. We note that the Fourier transform $\mathcal{F}$ is a linear operator initially defined on $L^1(\mathbb{R}^n)$ and $L^2(\mathbb{R}^n)$. It is well know that the first space $L^1(\mathbb{R}^n)$ is mapped  by $\mathcal{F}$ boundedly with norm $\leq 1$ into $L^{\infty}(\mathbb{R}^n).$ As an application of Plancherel theorem yields that the second space $L^2(\mathbb{R}^n)$ is mapped into itself  by $\mathcal{F}$ with operator norm exactly equal to $1.$ In general, Fourier transform $\mathcal{F}$ has a natural extension to the spaces $L^s(\mathbb{R}^n),~1\leq s\leq 2.$ And  by the Hausdorff-Young inequality, this space $L^s,1\leq s\leq 2$ is mapped by $\mathcal{F}$ boundedly with operator norm $\leq 1$ into dual space $L^{s'}(\mathbb{R}^n),$ where $s,s'$ are H\"{o}lder's conjugate. Since $L^s,~1\leq s\leq 2$ has cotype 2,  we obtain the following:
	\begin{theorem}
		There is constant $C>0$ and for every $n$, we have
		$$
		K(B_{{\ell}_p^n},\mathcal{F},\bm{\lambda})\geq \begin{cases}\vspace{0.1cm}
			C\cfrac{\bm{\lambda}-\|\mathcal{F}\|}{\bm{\lambda}},
			& \text{if $p\leq 2$,}\\\vspace{0.2cm}
			C\cfrac{\bm{\lambda}-\|\mathcal{F}\|}{\bm{\lambda}} \cdot \cfrac{1}{ n^{\frac{1}{r}-\frac{1}{p}}},
			& \text{if $2<p$}.
		\end{cases}
		$$
	\end{theorem}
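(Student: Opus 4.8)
The result is a direct specialization of Theorem~\ref{KMthe3.31}(1) to the operator $\mathcal{V}=\mathcal{F}$, so the plan is first to check the hypotheses and then to read off the explicit constant. Regard $\mathcal{F}$ as a bounded operator $L^{s}\to L^{s'}$ for a fixed $1\leq s\leq 2$. By the Hausdorff--Young inequality $\|\mathcal{F}\|\leq 1$, and since $\bm{\lambda}>1$ we have $\|\mathcal{F}\|<\bm{\lambda}$, so $\mathcal{F}$ satisfies the standing assumption $\|\mathcal{V}\|<\bm{\lambda}$ of Theorem~\ref{KMthe3.31}. Moreover the domain $X=L^{s}$ has cotype $q=2$ for every $1\leq s\leq 2$, which is exactly the cotype hypothesis of part~(1). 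With $q=2$ one has $r=q/(q-1)=2$, so the dichotomy ``$p\leq r$'' versus ``$r<p$'' becomes ``$p\leq 2$'' versus ``$2<p$'', and in the latter range the exponent is $1/r-1/p=1/2-1/p$. This reproduces precisely the case split and the power of $n$ claimed in the assertion.

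It remains to extract the dependence of the implicit constant on $\bm{\lambda}$ and $\|\mathcal{F}\|$. For this I would retrace the proof of Theorem~\ref{KMthe3.31}, dividing inequality \eqref{eq1.2} by $\bm{\lambda}$ so that the right-hand side becomes the unit budget $\sup_{z\in B_{\ell_p^n}}\|f(z)\|_X$. Writing a bounded holomorphic $f=\sum_{\alpha}c_{\alpha}z^{\alpha}$ and separating its degree-zero term, that term contributes $\|\mathcal{F}(c_{0})\|_Y/\bm{\lambda}\leq(\|\mathcal{F}\|/\bm{\lambda})\sup_{z\in B_{\ell_p^n}}\|f(z)\|_X$, i.e.\ a fraction $\|\mathcal{F}\|/\bm{\lambda}$ of the budget. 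The sum of the homogeneous terms of degree $\geq 1$ is controlled, via the cotype-$2$ polynomial inequality for $L^{s}$, by a convergent series in the radius whose leading behaviour is linear in the radius; forcing this tail to fit inside the remaining fraction $(\bm{\lambda}-\|\mathcal{F}\|)/\bm{\lambda}$ of the budget produces an admissible radius proportional to $(\bm{\lambda}-\|\mathcal{F}\|)/\bm{\lambda}$ times the critical order, namely $1$ when $p\leq 2$ and $1/n^{1/2-1/p}$ when $p>2$. The cotype-$2$ constant of $L^{s}$ and the universal constants of the underlying polynomial estimates are then absorbed into $C$.

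The main obstacle is precisely this bookkeeping: one must verify that the implicit constant in the $\gtrsim$ of Theorem~\ref{KMthe3.31} genuinely scales linearly in the slack $\bm{\lambda}-\|\mathcal{F}\|$ and inversely in $\bm{\lambda}$, rather than being swallowed into an unstructured constant, which requires keeping the normalization by $\bm{\lambda}$ explicit throughout that proof. A secondary point is that the cotype-$2$ constant $C_{2}(L^{s})$ is finite for each fixed $s\in[1,2]$, so that $C$ may be chosen independent of $n$ (it may, however, depend on $s$, which is permitted here since $\mathcal{F}$ is held fixed).
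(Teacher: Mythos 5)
Your proposal is correct and takes essentially the same route as the paper: the theorem is precisely the specialization of the general cotype result to $\mathcal{F}\colon L^{s}\to L^{s'}$, $1\leq s\leq 2$, using the Hausdorff--Young bound $\|\mathcal{F}\|\leq 1<\bm{\lambda}$ and the cotype-$2$ property of $L^{s}$, so that $q=2$, $r=q/(q-1)=2$, which yields exactly the stated case split and the exponent $\frac{1}{2}-\frac{1}{p}$. The constant-tracking you flag as the main obstacle is already done in the paper: Theorem \ref{KMthe3.3}(1) (the explicit version of Theorem \ref{KMthe3.31}(1)) is stated with the factor $C\,\frac{\bm{\lambda}-\|\mathcal{V}\|}{\bm{\lambda}}$, obtained there from $K(B_{\ell_p^n},\mathcal{V},\bm{\lambda})\geq K(B_{\ell_p^n},X,\bm{\lambda}/\|\mathcal{V}\|)$ combined with Theorem \ref{Bqcotype}, and your bookkeeping sketch (constant term consuming $\|\mathcal{F}\|/\bm{\lambda}$ of the budget, geometric-series tail via the homogeneous estimates, as in Lemma \ref{4Klemma1}) is consistent with how that factor actually arises, so it can simply be cited rather than re-derived.
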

	We now turn to the arithmetic Bohr radius of Banach spaces, which was first time introduced and studied by Defant et al. in \cite{DMP08} for the scalar case. The concept of the arithmetic Bohr radius plays an important role in studying the upper bound of the multi-dimensional Bohr radius and is also used as a main technical tool in order to derive upper inclusions for domains of convergence by Defant et al. \cite{DMP09}.
	\begin{definition}
		Let $\mathcal{H}(B_{{\ell}_p^n},X)$ be a set of all holomorphic functions on a domain $B_{{\ell}_p^n}$ take value in a complex Banach space $X$. Also, let  $1\leq p\leq \infty$, $n\in\mathbb{N}$, $\bm{\lambda}>1$, and $\mathcal{V}:X\to Y$ be a bounded operator in complex Banach space such that $\|\mathcal{V}\|\leq \bm{\lambda}$.
		Then the {\em$\bm{\lambda}$-arithmetic Bohr radius} of $\mathcal{V}$ is defined as
		\begin{align*}
			A(\mathcal{H}(B_{{\ell}_p^n},X),\mathcal{V},\bm{\lambda}):=  \sup\bigg\{ & \frac{1}{n}\sum_{i=1}^{n}r_i,~ r\in \mathbb{R}_{\geq 0}^n:\sum_{\alpha\in \mathbb{N}_0^n} \|\mathcal{V}(c_\alpha(f))\|_Y ~ r^\alpha\leq \bm{\lambda}\sup_{z\in B_{\ell_p^n}} \big\|f(z)\big\|_X \\ 
			&\mbox{ for all } f\in \mathcal{H}(B_{{\ell}_p^n},X)\bigg\},
		\end{align*}
		where $\mathbb{R}_{\geq 0}^n=\{r=(r_1,\dots,r_n)\in \mathbb{R}^n: r_i\geq 0, 1\leq i \leq n \}$. If $\mathcal{H}(B_{{\ell}_p^n},X)$ is a set of all bounded holomorphic functions defined on $B_{{\ell}_p^n}$ and take value in a complex Banach space $X$ then we set $A(B_{{\ell}_p^n},\mathcal{V},\bm{\lambda}):=A(\mathcal{H}(B_{{\ell}_p^n},X),\mathcal{V},\bm{\lambda})$. If $\mathcal{V}$ is an identity operator on $X$ we set $A(B_{{\ell}_p^n},X,\bm{\lambda}):=A(B_{{\ell}_p^n},\mathcal{V},\bm{\lambda})$. 
	\end{definition}
	Finally, we provide the bounds of the arithmetic Bohr radii for the general bounded linear operator on $\ell_1$ space to $\ell_q$, $1\leq q< \infty$, in the following theorem.
	\begin{theorem}\label{Aoperator}
		Let $1\leq p \leq \infty$ and  $1\leq q<\infty$. Suppose $\mathcal{V}:\ell_1 \to \ell_q$ is any bounded operator. Then, we have
		$$
		\frac{1}{n^{1/p}}\bigg( \frac{\log n}{n} \bigg)^{1-\frac{1}{\max\{2,q\}}}\lesssim A(B_{{\ell}_p^n},\mathcal{V},\bm{\lambda}) \lesssim \cfrac{\big(\log n\big)^{ 1- (1/\min\{p,2\})}}{\displaystyle n^{ \frac{1}{2}+(1/\max\{p,2\})}},
		$$
		with constants only depending on $\bm{\lambda}$ and $\mathcal{V}.$
	\end{theorem}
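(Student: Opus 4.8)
The plan is to compare the arithmetic Bohr radius with the (isotropic) Bohr radius $K(B_{\ell_p^n},\cdot,\bm\lambda)$ controlled in Theorem \ref{th1.2}; the guiding principle is that on $B_{\ell_p^n}$ the passage from the isotropic to the arithmetic radius costs exactly a factor $n^{-1/p}$, and this is the origin of the extra $n^{-1/p}$ appearing in both estimates of Theorem \ref{Aoperator}. Accordingly I would prove the two inequalities separately, the lower one from an admissible constant radius vector and the upper one by reducing to the scalar case and testing against a Kahane--Salem--Zygmund polynomial.

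For the lower bound I would exploit the freedom to take the radius vector constant in all coordinates. Writing $\rho:=K(B_{\ell_p^n},\mathcal V,\bm\lambda)$, for any $0\le\tilde\rho<\rho$ set $r:=\tilde\rho\,n^{-1/p}(1,\dots,1)$, so that $\|r\|_p=\tilde\rho<\rho$ and hence $r\in\rho B_{\ell_p^n}$. For every bounded holomorphic $f=\sum_\alpha c_\alpha z^\alpha$ on $B_{\ell_p^n}$, evaluating the defining inequality of $K(B_{\ell_p^n},\mathcal V,\bm\lambda)$ at the point $r$ gives
\[
\sum_{\alpha\in\mathbb N_0^n}\|\mathcal V(c_\alpha)\|_Y\,r^\alpha\le\sup_{z\in\rho B_{\ell_p^n}}\sum_{\alpha}\|\mathcal V(c_\alpha)z^\alpha\|_Y\le\bm\lambda\sup_{z\in B_{\ell_p^n}}\|f(z)\|_X .
\]
Thus $r$ is admissible in the definition of the arithmetic Bohr radius, so $A(B_{\ell_p^n},\mathcal V,\bm\lambda)\ge\frac1n\sum_{i=1}^n r_i=\tilde\rho\,n^{-1/p}$; letting $\tilde\rho\uparrow\rho$ yields $A(B_{\ell_p^n},\mathcal V,\bm\lambda)\ge n^{-1/p}K(B_{\ell_p^n},\mathcal V,\bm\lambda)$. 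Feeding in the lower bound for $K(B_{\ell_p^n},\mathcal V,\bm\lambda)$ from Theorem \ref{th1.2} (with exponent $1-1/\max\{2,q\}$) produces the left-hand estimate of Theorem \ref{Aoperator}.

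For the upper bound I would first reduce to the scalar case. Fix $a\in\ell_1$ with $\mathcal Va\ne0$ and test the admissibility condition only against the rank-one functions $f=g\,a$, where $g$ is a bounded scalar holomorphic function on $B_{\ell_p^n}$; then $c_\alpha(f)=c_\alpha(g)a$, $\|\mathcal V(c_\alpha(f))\|_Y=|c_\alpha(g)|\,\|\mathcal Va\|_q$ and $\sup_z\|f(z)\|_{\ell_1}=\|a\|_1\,\|g\|_\infty$. Since the defining condition must hold for \emph{all} $f$, restricting to this subfamily can only enlarge the admissible set of radii, whence $A(B_{\ell_p^n},\mathcal V,\bm\lambda)\le A(B_{\ell_p^n},\mathbb C,\bm\lambda')$ with $\bm\lambda':=\bm\lambda\,\|a\|_1/\|\mathcal Va\|_q$; in particular the upper bound is $q$-independent, exactly as the claimed estimate is.

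It then remains to bound the scalar arithmetic Bohr radius. Here I would test against a single $m$-homogeneous polynomial $P=\sum_{|\alpha|=m}\varepsilon_\alpha z^\alpha$ with unimodular coefficients $\varepsilon_\alpha$. For such $P$ the admissibility condition reads $\sum_{|\alpha|=m}r^\alpha=h_m(r)\le\bm\lambda'\|P\|_\infty$, where $h_m$ is the complete homogeneous symmetric polynomial; the elementary identity/inequality $\big(\sum_i r_i\big)^m=\sum_{|\alpha|=m}\binom{m}{\alpha}r^\alpha\le m!\,h_m(r)$ then controls \emph{every} admissible $r$ simultaneously and yields $\frac1n\sum_i r_i\le (m!\,\bm\lambda'\|P\|_\infty)^{1/m}/n$. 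Choosing the signs $\varepsilon_\alpha$ via the Kahane--Salem--Zygmund inequality to make $\|P\|_{\infty,B_{\ell_p^n}}$ as small as possible, and then optimizing the degree at $m\asymp\log n$, produces precisely the bound $n^{-1/p}\big(\tfrac{\log n}{n}\big)^{1-1/\min\{p,2\}}=(\log n)^{1-1/\min\{p,2\}}/n^{1/2+1/\max\{p,2\}}$; this is the very KSZ polynomial that yields the classical isotropic estimate $K(B_{\ell_p^n})\lesssim(\log n/n)^{1-1/\min\{p,2\}}$ recalled in the introduction, now smaller by the factor $n^{-1/p}$. I expect this last step to be the main obstacle: extracting the sharp KSZ sup-norm bound on $B_{\ell_p^n}$ for the full family of degree-$m$ monomials and carrying out the optimization over $m$, which is what generates both the logarithmic factor and the exponent $1/2+1/\max\{p,2\}$. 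By contrast, the symmetric-function inequality above is precisely the device that makes the argument uniform over non-uniform radius vectors $r$, and so is the conceptual key to passing from a single test polynomial to a genuine upper bound for the supremum defining $A$.
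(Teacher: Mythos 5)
Your overall architecture is the same as the paper's. The lower bound --- $A(B_{\ell_p^n},\mathcal V,\bm{\lambda})\ge n^{-1/p}K(B_{\ell_p^n},\mathcal V,\bm{\lambda})$ via an admissible constant radius vector, then Theorem \ref{th1.2} --- is exactly Lemma \ref{KMthe1.4} combined with Theorem \ref{th1.2}, and your constant-vector proof of that inequality is correct (indeed slightly cleaner than the paper's). Your rank-one reduction $A(B_{\ell_p^n},\mathcal V,\bm{\lambda})\le A(B_{\ell_p^n},\mathbb C,\bm{\lambda}')$ with $\bm{\lambda}'=\bm{\lambda}\|a\|_1/\|\mathcal V a\|_q$ is also correct, and it is precisely the observation behind the paper's upper bound (proof of Theorem \ref{Aembed}), with the normalization constant made explicit. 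The only divergence is in the last step: the paper does not re-prove the scalar estimate, it quotes $A(B_{\ell_p^n},\bm{\lambda})\lesssim \bm{\lambda}^{2/\log n}(\log n)^{1-1/\min\{p,2\}}/n^{1/2+1/\max\{p,2\}}$ from \cite[Theorem 4.1]{DMP08}, whereas you attempt to re-derive it.

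That re-derivation has a genuine quantitative gap. You test with $P=\sum_{|\alpha|=m}\varepsilon_\alpha z^\alpha$ (unimodular coefficients), so admissibility only controls $h_m(r)$, and you pass to $\big(\sum_i r_i\big)^m$ via $\binom{m}{\alpha}\le m!$; after taking $m$-th roots this costs a factor $(m!)^{1/m}\approx m/e$. For $p\le 2$ none of this loss can be recovered through the Kahane--Salem--Zygmund bound, because there the square function satisfies $\sup_{\|z\|_p\le1}\sum_{|\alpha|=m}|z^\alpha|^2=1$ (it is maximized at $z=e_1$, since $h_m(w)\le\|w\|_1^m\le\|w\|_{p/2}^m$), so the best KSZ can give is $\|P\|_{B_{\ell_p^n}}\lesssim\sqrt{n\log m}$, and your scheme yields
$$
\frac1n\sum_{i=1}^n r_i\ \le\ \frac{\big(m!\,\bm{\lambda}'\,\|P\|_{B_{\ell_p^n}}\big)^{1/m}}{n}\ \lesssim\ \frac{(m!)^{1/m}(n\log m)^{1/2m}}{n}\ \asymp\ \frac{\log n}{n}
$$
at the optimal $m\asymp\log n$. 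This misses the claimed bound $(\log n)^{1-1/p}/n$ by the factor $(\log n)^{1/p}$ --- already $\sqrt{\log n}$ at $p=2$ --- so the proposal does not prove the theorem in the range $1<p\le2$. (For $p>2$ the scheme can be rescued, but only via the non-obvious refinement $\sup_{B_{\ell_p^n}}\sum_\alpha|z^\alpha|^2\approx n^{m(1-2/p)}/m!$; the natural estimate through $\|z\|_2^{2m}\le n^{m(1-2/p)}$ again loses $\sqrt{\log n}$.) The device that makes the argument work --- used in \cite{DMP08} and by Boas \cite{B2000}, and in fact suggested by the very identity you wrote down --- is to put the random signs on the \emph{weighted} coefficients: take $P=\sum_{|\alpha|=m}\varepsilon_\alpha\frac{m!}{\alpha!}z^\alpha$, so that admissibility controls $\sum_\alpha\frac{m!}{\alpha!}r^\alpha=\big(\sum_i r_i\big)^m$ exactly, with no multiplicative loss; the relevant square function then obeys $\sup_{B_{\ell_p^n}}\big(\sum_\alpha(\tfrac{m!}{\alpha!})^2|z^\alpha|^2\big)^{1/2}\le(m!)^{1-1/\min\{p,2\}}\max\{1,n^{m(\frac12-\frac1p)}\}$, and optimizing at $m\asymp\log n$ produces exactly the claimed exponents. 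Alternatively, simply cite \cite[Theorem 4.1]{DMP08} for the scalar case, as the paper does; your lower bound and reduction step then constitute a complete proof.
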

	
	\begin{remark}
		If we consider $p=q=2$ in the above theorem then interestingly we got the exact asymptotic value of $A(B_{{\ell}_p^n},\mathcal{V},\bm{\lambda})$. 
	\end{remark}
	
	Moreover, for general bounded operator $\mathcal{V}$, with $\|\mathcal{V}\|<\bm{\lambda}$ and $1\leq p \leq \infty$, we have the estimates
	$$
	\frac{1}{n} \lesssim A(B_{{\ell}_p^n},\mathcal{V},\bm{\lambda}) \lesssim \cfrac{\big(\log n\big)^{ 1- (1/\min\{p,2\})}}{\displaystyle n^{ \frac{1}{2}+(1/\max\{p,2\})}},
	$$
	with constants only depending on $\bm{\lambda}$ and $\mathcal{V}$.
	
	The structure of this paper is as follows: In Section 2, we establish some basic properties of the Bohr radius $K(B_{{\ell}_p^n},\mathcal{V},\bm{\lambda})$ and the arithmetic Bohr radius $A(B_{{\ell}_p^n},\mathcal{V},\bm{\lambda})$, for each $1\leq p \leq \infty$, that help to prove our main results. Also, in the same section, we establish a lower bound of the arithmetic Bohr radius in terms of multi-dimensional Bohr radius. In Section 3, we estimate the bounds of $K(B_{{\ell}_p^n},X,\bm{\lambda})$ and $A(B_{{\ell}_p^n},X,\bm{\lambda})$ for particular type of the Banach space $X$. Moreover, we discuss the consequences of these estimates. 
	
	\section{\bf Basic definitions and key estimates}
	Let $\mathcal{P}(\prescript{m}{}{\ell_p^n},X)$ denote the space of all $m$-homogeneous polynomials defined on the $\ell_p^n$ space. Set $|\alpha|:=\alpha_1 +\cdots +\alpha_n$ for $\alpha=(\alpha_1,\dots, \alpha_n)\in \mathbb{N}_0^n$. The polynomial $P\in \mathcal{P}(\prescript{m}{}{\ell_p^n}, X)$ written as the following form
	$$
	P(z_1,\dots,z_n)=\sum_{\alpha\in\Lambda(m,n)} a_\alpha z^\alpha, 
	$$
	where $\Lambda(m,n):=\{\alpha\in\mathbb{N}_0^n:|\alpha|=m\}$ and $a_\alpha \in X$. Alternatively, we can write a polynomial $P\in \mathcal{P}(\prescript{m}{}{\ell_p^n}, X)$ is as
	$$
	P(z_1,\dots,z_n)=\sum_{{\bf j}\in\mathcal{J}(m,n)} c_{\bf j} z_{\bf j}, 
	$$
	where $\mathcal{J}(m,n):=\{{\bf j}=(j_1,\dots,j_m):1\leq j_1\leq\dots \leq j_m\leq n\}$, $z_{\bf j}:=z_{j_1}\dots z_{j_m}$, and $c_{\bf j}\in X$. Then the elements $(z^\alpha)_{\alpha\in\Lambda(m,n)}$ or $(z_{\bf j})_{{\bf j}\in\mathcal{J}(m,n)}$ are referred to as the monomials. Note that the coefficients $c_{\bf j}, a_\alpha$ are related by the relation  $c_{\bf j}=a_\alpha$ with ${\bf j}=(1,\overset{\alpha_1}{\dots},1,\dots,n,\overset{\alpha_n}{\dots},n)$. More precisely there is a one-to-one correspondence between two index sets $\Lambda(m,n)$ and $\mathcal{J}(m,n)$.
	For fixed ${\bf j}\in\mathcal{J}(m,n)$, we define
	$$
	[{\bf j}]=\{{\bf i}=(i_1,\dots,i_m)\in \mathbb{N}^m:(i_{\sigma(1)},\dots,i_{\sigma(m)})={\bf j} \mbox{ some permutation } \sigma \in S_m \}.
	$$ 
	We denote $|{\bf j}|$ the number of elements in $[{\bf j}]$.
	Observe that if the index ${\bf j}$ is $(1,\overset{\alpha_1}{\dots},1,\dots,n,\overset{\alpha_n}{\dots},n)$ then
	$$
	|{\bf j}|=\frac{m!}{\alpha!}.
	$$
	Given a set $\mathcal{J}\subset\mathcal{J}(m,n)$, we define
	$$
	\mathcal{J}^*=\{{\bf j}\in \mathcal{J}(m-1,n): \mbox{ there is } k\geq 1, \ ({\bf j},k)\in\mathcal{J}\}.
	$$
	The above notation was introduced by Bayart et al. in \cite{BDS19}.
	
	Also, we define the notation of the following index set
	$$
	\mathcal{M}(A,n)=\{1,\dots,n\}^A,
	$$
	where $A\subset \mathbb{N}$ and $n\in \mathbb{N}$. So, for $m,n\in\mathbb{N}$, we summarize
	$$
	\mathcal{M}(m,n)=\mathcal{M}(\{1,\dots,m\},n).
	$$
	In $\mathcal{M}(m,n)$, ${\bf i}\sim{\bf j}$ whenever there is a permutation $\sigma \in S_M$ such that $i_{\sigma(k)}=j_k$ for all $1\leq k \leq M$. It is simple that $\mathcal{M}(m,n)=\bigcup_{{\bf j}\in \mathcal{J}(m,n)}[{\bf j}]$. It is simple to observe that $\mbox{card}[{\bf j}_\alpha]=\frac{m!}{\alpha!}$ for every $\alpha \in \Lambda(m,n)$, where $\alpha!=\alpha_1!\cdots \alpha_n!$. 
	
	The absolute value of an element $x$ of a Banach space $X$ is defined by $|x|=x\vee (-x)$.
	A Banach space $X$ is said to be a Banach lattice if it is a vector lattice and satisfies that $\|x\|\leq \|y\|$, for all $x,y \in X$, whenever $|x|\leq |y|$. A Banach lattice $X$ is called $q$-concave, $1\leq q< \infty$, if there is a constant $B>0$ such that the following inequality holds
	$$
	\Bigg(\sum_{k=1}^n \| x_k\|^q\Bigg)^{1/q} \leq B \Bigg\|\Bigg(\sum_{k=1}^n |x_k|^q\Bigg)^{1/q}\Bigg\|,
	$$
	for arbitrarily chosen of finitely many $x_1,\dots,x_n \in X$. 
	
	Here the best such constant $B$ is as usual denoted by $M_q(X)$. Let $2\leq q \leq \infty$. Then, a Banach space $X$ is said to have cotype $q$, if there exists a constant $C>0$ such that the following holds
	$$
	\Bigg(\sum_{k=1}^n \| x_k\|^q\Bigg)^{1/q} \leq C \Bigg(\int_0^1 \Bigg\|\sum_{k=1}^n r_k(t) x_k\Bigg\|^2 dt \Bigg)^{1/2} ,
	$$
	for every choice of vectors $x_1,\dots,x_n \in X$,
	where $r_k$ stands for the $k$th Rademacher function on $[0,1]$. Here the best such constant $C$ is denoted by $C_p(X)$. We set 
	$$
	\mbox{Cot}(X)=\max\{2\leq q \leq \infty| X \mbox{ has cotype } q \}.
	$$
	It is easy to observe that every Banach space has cotype $\infty$, and for $\mbox{Cot}(X)=\infty$ we write $1/\mbox{Cot}(X)=0$. A $q$-concave Banach lattice is cotype $q$, when $q\geq 2$. Conversely, it is well known that each Banach lattice of cotype $2$ is $2$-concave, and for $q>2$, a Banach lattice of cotype $q$ is $r$-concave for all $r>q$. For a detailed study on these topics, we refer \cite{LT77,LT79}.
	
	A bounded linear operator between two Banach spaces $X$ and $Y$ is called $(p,q)$-summing, $1\leq p,q<\infty$, whenever there is a constant $C>0$ such that the following
	\begin{equation}
		\Bigg(\sum_{j=1}^n \|\mathcal{V} (x_j)\|_Y^p\Bigg)^{1/p} \leq C \sup_{x^{*}\in B_{X^{*}}} \Bigg(\sum_{j=1}^n |x^{*}(x_j)|^q\Bigg)^{1/q},
	\end{equation}
	holds for each choice of finitely many $x_1,\cdots,x_n\in X$.
	The best such constant $C$ is as usual denoted by $\pi_{p,q}(\mathcal{V})$. In the case $p=q$, $\mathcal{V}$ is called $p$-summing and we denote $\pi_p(\mathcal{V})$.
	\begin{definition}\label{def2}
		Let  $1\leq p\leq \infty$, $n\in\mathbb{N}$, $\bm{\lambda}>1$, and $\mathcal{V}:X\to Y$ be a bounded operator in complex Banach space such that $\|\mathcal{V}\|\leq \bm{\lambda}$. Then $K_m(B_{{\ell}_p^n},\mathcal{V},\bm{\lambda})$ is defined to be the supremum of all $r\geq 0$ such that the following inequality
		$$
		\sup_{z\in rB_{\ell_p^n}} \sum_{|\alpha|=m} \|\mathcal{V}(c_\alpha) z^\alpha\|_Y \leq \bm{\lambda} \sup_{z\in B_{\ell_p^n}} \big\|P(z)\big\|_X
		$$
		holds for every $M$-homogeneous polynomials $P\in \mathcal{P}(\prescript{m}{}{\ell_p^n}, X)$, where $
		P(z)= \sum_{|\alpha|=m} c_\alpha z^\alpha
		$.
	\end{definition} 
	We can rewrite the above definition as 
	\begin{align}\label{eq2.2}
		K_m(B_{{\ell}_p^n},\mathcal{V},\bm{\lambda})=\sup\Bigg\{&r\geq 0:\sup_{z\in B_{\ell_p^n}} \sum_{|\alpha|=m} \|\mathcal{V}(c_\alpha) z^\alpha\|_Y \leq \frac{\bm{\lambda}}{r^m} \sup_{z\in B_{\ell_p^n}}\big\|P(z)\big\|_X \\ \nonumber &\mbox{ for all }P\in \mathcal{P}(\prescript{m}{}{\ell_p^n}, X) \Bigg\}.
	\end{align}
	Moreover 
	$$
	K_m(B_{\ell_p^n},\mathcal{V},\bm{\lambda})=\bm{\lambda}^{1/m}K_m(B_{\ell_p^n},\mathcal{V},1)
	$$
	and
	$$
	K(B_{\ell_p^n},\mathcal{V},\bm{\lambda})\geq \max\{K(B_{\ell_p^n},X,\bm{\lambda}/\|\mathcal{V}\|),K(B_{\ell_p^n},Y,\bm{\lambda}/\|\mathcal{V}\|)\}.
	$$
	The proof of the following lemma follows exactly like \cite[Lemma 3.2]{DMS12}. So, we are skipping the proof here.
	\begin{lemma}\label{4Klemma1}
		Suppose $\mathcal{V}:X\to Y$ is a bounded operator between complex Banach spaces $X$ and $Y$ with $\|\mathcal{V}\|< \bm{\lambda}$. Then
		\begin{enumerate}
			\item $\frac{\bm{\lambda}-\|\mathcal{V}\|}{2\bm{\lambda}-\|\mathcal{V}\|} \inf_{m\in \mathbb{N}} K_m(B_{{\ell}_p^n},\mathcal{V},\bm{\lambda}) \leq K(B_{{\ell}_p^n},\mathcal{V},\bm{\lambda}) \leq  \inf_{m\in \mathbb{N}} K_m(B_{{\ell}_p^n},\mathcal{V},\bm{\lambda}).$
			\item $\frac{\bm{\lambda}-\|\mathcal{V}\|}{\bm{\lambda}-\|\mathcal{V}\|+1} \inf_{m\in \mathbb{N}} K_m(B_{{\ell}_p^n},\mathcal{V},1) \leq K(B_{{\ell}_p^n},\mathcal{V},\bm{\lambda})
			$.
		\end{enumerate}
	\end{lemma}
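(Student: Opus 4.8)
The plan is to pass between a general holomorphic $f$ and its homogeneous Taylor parts, controlling each part through the defining inequality of $K_m$ together with the Cauchy estimate for homogeneous expansions; this is the $\ell_p^n$, Banach-space-valued analogue of \cite[Lemma 3.2]{DMS12}. Throughout I would write $f(z)=\sum_{m\ge 0}P_m(z)$ with $P_m(z)=\sum_{|\alpha|=m}c_\alpha z^\alpha$, and abbreviate $\|f\|_\infty=\sup_{z\in B_{\ell_p^n}}\|f(z)\|_X$. The easy upper bound in (1) comes first: if $r\le K(B_{\ell_p^n},\mathcal{V},\bm{\lambda})$, then \eqref{eq1.2} holds for every bounded holomorphic $f$, in particular for every $m$-homogeneous polynomial $P$. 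For such a $P$ the full monomial sum collapses to the sum over $|\alpha|=m$ and $\sup_B\|f\|=\sup_B\|P\|$, so the defining inequality of $K_m$ is met at radius $r$; hence $r\le K_m(B_{\ell_p^n},\mathcal{V},\bm{\lambda})$ for every $m$, giving $K\le \inf_m K_m$.

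For the lower bound, set $\rho=\inf_m K_m(B_{\ell_p^n},\mathcal{V},\bm{\lambda})$, assuming $\rho>0$ (otherwise there is nothing to prove). The crucial input is the Cauchy estimate
\[
\sup_{z\in B_{\ell_p^n}}\|P_m(z)\|_X\le\|f\|_\infty ,
\]
which I would derive from $P_m(z)=\tfrac{1}{2\pi}\int_0^{2\pi}f(e^{i\theta}z)e^{-im\theta}\,d\theta$ together with the fact that $B_{\ell_p^n}$ is balanced, so $e^{i\theta}z\in B_{\ell_p^n}$ whenever $z\in B_{\ell_p^n}$. Feeding this into the reformulation \eqref{eq2.2} of $K_m$ (valid up to $\rho$ after a routine limiting argument $r'\uparrow K_m$ and $K_m\ge\rho$), and using $m$-homogeneity with $s=\|z\|_p$, one obtains for every $m\ge 1$
\[
\sum_{|\alpha|=m}\|\mathcal{V}(c_\alpha)z^\alpha\|_Y\le s^m\sup_{w\in B_{\ell_p^n}}\sum_{|\alpha|=m}\|\mathcal{V}(c_\alpha)w^\alpha\|_Y\le \bm{\lambda}\Big(\tfrac{s}{\rho}\Big)^m\|f\|_\infty .
\]

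The constant term is handled separately: $\|\mathcal{V}(c_0)\|_Y\le\|\mathcal{V}\|\,\|f\|_\infty$. Summing the geometric series over $m\ge 1$, which converges precisely when $s<\rho$, gives
\[
\sum_{\alpha\in\mathbb{N}_0^n}\|\mathcal{V}(c_\alpha)z^\alpha\|_Y\le \|\mathcal{V}\|\,\|f\|_\infty+\bm{\lambda}\,\frac{s/\rho}{1-s/\rho}\,\|f\|_\infty .
\]
Requiring the right-hand side to be at most $\bm{\lambda}\|f\|_\infty$ reduces, with $x=s/\rho$, to $\tfrac{x}{1-x}\le\tfrac{\bm{\lambda}-\|\mathcal{V}\|}{\bm{\lambda}}$, which solves to $x\le\tfrac{\bm{\lambda}-\|\mathcal{V}\|}{2\bm{\lambda}-\|\mathcal{V}\|}$. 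Thus \eqref{eq1.2} holds on the ball of that radius, proving the lower bound in (1). Part (2) is identical except that I would feed in $K_m(\cdot,\mathcal{V},1)$, so the factor $\bm{\lambda}$ disappears from the homogeneous estimate; the condition becomes $\tfrac{x}{1-x}\le\bm{\lambda}-\|\mathcal{V}\|$, which solves to the constant $\tfrac{\bm{\lambda}-\|\mathcal{V}\|}{\bm{\lambda}-\|\mathcal{V}\|+1}$.

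The only genuinely delicate points are the Cauchy estimate for the vector-valued homogeneous parts, which relies on $B_{\ell_p^n}$ being balanced so that $e^{i\theta}z$ stays in the ball, and the separate bookkeeping of the $m=0$ term, which is exactly what produces the $\|\mathcal{V}\|$ appearing in the numerators of both constants. Everything else is the convergence of the geometric series on $\{s<\rho\}$ and an elementary optimization, so I expect these two bookkeeping steps to be where the precise constants are won or lost.
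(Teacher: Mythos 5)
Your proposal is correct and follows essentially the same route as the paper, which itself omits the argument and refers to \cite[Lemma 3.2]{DMS12}: decomposition into homogeneous parts, the Cauchy estimate $\sup_{B_{\ell_p^n}}\|P_m\|\le\|f\|_\infty$ via the balancedness of $B_{\ell_p^n}$, separate treatment of the constant term, and summation of the geometric series with the elementary optimization $x/(1-x)\le(\bm{\lambda}-\|\mathcal{V}\|)/\bm{\lambda}$ (resp. $\le\bm{\lambda}-\|\mathcal{V}\|$) yielding exactly the stated constants. Both the constants and the bookkeeping of the $m=0$ term check out, so nothing further is needed.
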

	
	The positivity of $\bm{\lambda}$-Bohr radius of $\mathcal{V}$ is provided in the following lemma, which is a generalization of \cite[Proposition 3.3]{DMS12}.
	
	\begin{proposition}\label{Pro2.2}
		Let $1\leq p\leq \infty$. Suppose the operator $\mathcal{V}:X\to Y$ is a non-null bounded operator between complex Banach spaces $X$ and $Y$ with $\|\mathcal{V}\|< \bm{\lambda}$. Then there exists $B>0$ such that for all $n\in \mathbb{N}$ 
		$$
		K(B_{{\ell}_p^n},\mathcal{V},\bm{\lambda})\geq B \frac{1}{n^{1-\frac{1}{p}}},
		$$
		where
		$$
		B=\begin{cases}\vspace{0.1cm}
			\max\bigg\lbrace\cfrac{\bm{\lambda}-\|\mathcal{V}\|}{(\bm{\lambda}-\|\mathcal{V}\|+1)\|\mathcal{V}\|}, \cfrac{\bm{\lambda}-\|\mathcal{V}\|}
			{2\bm{\lambda}-\|\mathcal{V}\|}\bigg\rbrace,
			& \text{if $\|\mathcal{V}\|\geq 1$,}\\\vspace{0.2cm}
			\max\bigg\lbrace\cfrac{\bm{\lambda}-\|\mathcal{V}\|}{\bm{\lambda}-\|\mathcal{V}\|+1}, \cfrac{\bm{\lambda}-\|\mathcal{V}\|}
			{2\bm{\lambda}-\|\mathcal{V}\|}\bigg\rbrace,
			& \text{if $0<\|\mathcal{V}\|< 1$}.
		\end{cases}
		$$
	\end{proposition}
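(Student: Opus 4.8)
The plan is to derive everything from Lemma~\ref{4Klemma1}, arranging matters so that the two expressions inside the maximum defining $B$ come out of its two parts. By Lemma~\ref{4Klemma1} it suffices to bound from below the quantities $\inf_{m}K_m(B_{\ell_p^n},\mathcal{V},\bm{\lambda})$ and $\inf_{m}K_m(B_{\ell_p^n},\mathcal{V},1)$: once these are controlled, part (1) contributes the factor $\tfrac{\bm{\lambda}-\|\mathcal{V}\|}{2\bm{\lambda}-\|\mathcal{V}\|}$, part (2) contributes $\tfrac{\bm{\lambda}-\|\mathcal{V}\|}{\bm{\lambda}-\|\mathcal{V}\|+1}$, and taking the better of the two yields the stated $B$. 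Thus the whole problem reduces to a single homogeneous estimate that is uniform in the degree $m$.

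The key estimate I would establish is that for every $m$-homogeneous $P(z)=\sum_{|\alpha|=m}c_\alpha z^\alpha$ on $\ell_p^n$,
$$
\sup_{z\in B_{\ell_p^n}}\sum_{|\alpha|=m}\|c_\alpha\|\,|z^\alpha|\ \le\ n^{(1-\frac1p)m}\,\sup_{z\in B_{\ell_p^n}}\big\|P(z)\big\|.
$$
To prove it I would pass to the symmetric $m$-linear map $L$ with $P(z)=L(z,\dots,z)$, write $c_\alpha=\tfrac{m!}{\alpha!}L(e_{j_1},\dots,e_{j_m})$ for the index $\mathbf{j}$ associated with $\alpha$, and combine the elementary identity $\sum_{|\alpha|=m}\tfrac{m!}{\alpha!}|z|^\alpha=\|z\|_1^m$ with Hölder's inequality $\|z\|_1\le n^{1-1/p}\|z\|_p$. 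The values $L(e_{j_1},\dots,e_{j_m})$ are controlled by $\sup_{z\in B_{\ell_p^n}}\|P(z)\|$ through Cauchy's estimates on the distinguished torus inside $B_{\ell_p^n}$; in the polydisk case $p=\infty$ one has $\|c_\alpha\|\le\sup_{z\in B_{\ell_p^n}}\|P(z)\|$ directly, which is what keeps the constant clean. Granting this estimate, inserting $\|\mathcal{V}(c_\alpha)\|\le\|\mathcal{V}\|\,\|c_\alpha\|$ into the reformulation \eqref{eq2.2} gives
$$
K_m(B_{\ell_p^n},\mathcal{V},1)\ \ge\ \frac{\|\mathcal{V}\|^{-1/m}}{n^{1-\frac1p}},\qquad
K_m(B_{\ell_p^n},\mathcal{V},\bm{\lambda})\ \ge\ \frac{(\bm{\lambda}/\|\mathcal{V}\|)^{1/m}}{n^{1-\frac1p}} .
$$

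Finally I would pass to the infimum over $m$, and this is exactly where the two cases in the formula for $B$ are born. Since $\bm{\lambda}/\|\mathcal{V}\|>1$, the sequence $(\bm{\lambda}/\|\mathcal{V}\|)^{1/m}$ decreases to $1$, whence $\inf_m K_m(B_{\ell_p^n},\mathcal{V},\bm{\lambda})\ge n^{-(1-1/p)}$, and Lemma~\ref{4Klemma1}(1) produces the term $\tfrac{\bm{\lambda}-\|\mathcal{V}\|}{2\bm{\lambda}-\|\mathcal{V}\|}\,n^{-(1-1/p)}$. For the second bound, when $\|\mathcal{V}\|\ge 1$ the sequence $\|\mathcal{V}\|^{-1/m}$ increases to $1$, so its infimum is attained at $m=1$ and equals $\|\mathcal{V}\|^{-1}$, whereas when $0<\|\mathcal{V}\|<1$ it decreases to $1$ and the infimum is $1$; feeding these into Lemma~\ref{4Klemma1}(2) gives $\tfrac{\bm{\lambda}-\|\mathcal{V}\|}{(\bm{\lambda}-\|\mathcal{V}\|+1)\|\mathcal{V}\|}\,n^{-(1-1/p)}$ in the first regime and $\tfrac{\bm{\lambda}-\|\mathcal{V}\|}{\bm{\lambda}-\|\mathcal{V}\|+1}\,n^{-(1-1/p)}$ in the second. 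Taking the maximum of the bounds coming from the two parts of the lemma then yields precisely the asserted $B$. The main obstacle is the homogeneous estimate above with the correct dimension-only constant: the passage from $\sup\|P\|$ to the $\ell_1$-type sum of coefficient norms is where the degree-dependent polarization constant threatens to appear, and keeping it out — as in the polydisk argument of \cite{DMS12} — is the delicate step, after which the reductions via Lemma~\ref{4Klemma1} and the infimum over $m$ are routine.
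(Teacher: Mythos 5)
Your proposal reproduces the paper's own proof of Proposition \ref{Pro2.2} essentially step by step: the same reduction through both parts of Lemma \ref{4Klemma1}, the same per-degree lower bound of order $n^{-(1-\frac{1}{p})}$ for $K_m$, the same computation of $\inf_m \|\mathcal{V}\|^{-1/m}$ producing the case split $\|\mathcal{V}\|\geq 1$ versus $0<\|\mathcal{V}\|<1$, and the final maximum of the two resulting bounds. Even your key homogeneous estimate is the paper's estimate in disguise: the paper bounds $\sum_{|\alpha|=m}\|\mathcal{V}(c_\alpha)z^\alpha\|$ by $\|\mathcal{V}\|\max_\alpha\|c_\alpha\|\cdot\|z\|_1^m$, using $\sum_{|\alpha|=m}|z^\alpha|\leq\|z\|_1^m$ together with the assertion $\max_\alpha\|c_\alpha\|\leq\|P\|_{\mathcal{P}(\prescript{m}{}{\ell_p^n})}$, which is equivalent bookkeeping to your route via $c_\alpha=\frac{m!}{\alpha!}L(e_{j_1},\dots,e_{j_m})$ and the multinomial identity.

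There is, however, a genuine gap, and it sits exactly at the step you yourself single out as delicate. The clean constant requires $\|L(e_{j_1},\dots,e_{j_m})\|\leq\sup_{z\in B_{\ell_p^n}}\|P(z)\|$, equivalently $\|c_\alpha\|\leq\frac{m!}{\alpha!}\|P\|$, and Cauchy's estimates cannot deliver this for finite $p$: the largest polytorus inside $B_{\ell_p^n}$ has polyradius $r_i=(\alpha_i/m)^{1/p}$, so Cauchy only gives $\|c_\alpha\|\leq(m^m/\alpha^\alpha)^{1/p}\|P\|$, a degree-dependent constant. In fact the clean claim is false: for $p=1$, $P(z)=z_1z_2$ on $B_{\ell_1^2}$ has $\|L(e_1,e_2)\|=\tfrac12>\tfrac14=\|P\|$; worse, your summed estimate itself fails, since for $P(z)=z_1^2+10i\,z_1z_2+z_2^2$ one has $\sup_{z\in B_{\ell_1^2}}\sum_\alpha|c_\alpha||z^\alpha|=3$ while $\|P\|<3$ (the factor $i$ makes simultaneous phase alignment of $z_1^2$, $iz_1z_2$, $z_2^2$ impossible, so the triangle inequality is strict at every point of the compact closed ball). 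To be fair, the paper's written proof asserts $\max_\alpha\|c_\alpha\|\leq\|P\|$ at the very same spot and is vulnerable to the same example, so you have faithfully reproduced the argument, weakest step included; but as a self-contained proof yours does not close. The damage is limited: combining the honest Cauchy bound with $(m^m/\alpha^\alpha)^{1/p}\leq e^{m/p}\,\frac{m!}{\alpha!}$ and $\sum_\alpha\frac{m!}{\alpha!}|z^\alpha|=\|z\|_1^m\leq n^{m(1-\frac1p)}\|z\|_p^m$ gives $K_m(B_{\ell_p^n},\mathcal{V},1)\geq\|\mathcal{V}\|^{-1/m}e^{-1/p}\,n^{-(1-\frac1p)}$, and running the rest of your argument verbatim then proves the proposition with $B$ replaced by $e^{-1/p}B$: the correct asymptotic order in $n$, but not the exact constant stated.
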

	
	\begin{proof}
		Let $P(z)=\sum_{|\alpha|=m} c_\alpha z^\alpha$ be an $m$-homogeneous polynomial. Then, we have 
		\begin{align*}
			\sup_{z\in \frac{1}{n^{1-\frac{1}{p}}}B_{\ell_p^n}} \sum_{|\alpha|=m} \|\mathcal{V}(c_\alpha) z^\alpha\|_Y &\leq \max_{|\alpha|=m}\|\mathcal{V}(c_\alpha)\|\sup_{z\in \frac{1}{n^{1-\frac{1}{p}}}B_{\ell_p^n}} \sum_{|\alpha|=m} |z^\alpha|\\
			&\leq \|\mathcal{V}\| \max_{|\alpha|=m}\|c_\alpha\|\sup_{z\in \frac{1}{n^{1-\frac{1}{p}}}B_{\ell_p^n}} (|z_1|+\cdots+|z_n|)^m\\
			&\leq \|\mathcal{V}\| \max_{|\alpha|=m}\|c_\alpha\|\sup_{z\in \frac{1}{n^{1-\frac{1}{p}}}B_{\ell_p^n}} (|z_1|^p+\cdots+|z_n|^p)^{m/p} n^{m-\frac{m}{p}}\\
			&\leq \bm{\lambda} \max_{|\alpha|=m}\|c_\alpha\|\leq \bm{\lambda} \|P\|_{\mathcal{P}(\prescript{m}{}{\ell_p^n})},
		\end{align*}
		where $\|P\|_{\mathcal{P}(\prescript{m}{}{\ell_p^n})}=\sup_{z\in B_{\ell_p^n}}|P(z)|$.
		This implies that for all $m$
		$$
		K_m(B_{{\ell}_p^n},\mathcal{V},\bm{\lambda})\geq \frac{1}{n^{1-\frac{1}{p}}}.
		$$
		Hence, by Lemma \ref{4Klemma1}$(1)$ we have
		$$
		K(B_{{\ell}_p^n},\mathcal{V},\bm{\lambda})\geq \frac{1}{n^{1-\frac{1}{p}}}\cfrac{\bm{\lambda}-\|\mathcal{V}\|}
		{2\bm{\lambda}-\|\mathcal{V}\|}.
		$$
		By similar argument as above, for every $m$-homogeneous polynomial $P(z)=\sum_{|\alpha|=m} c_\alpha z^\alpha$ we get that
		$$
		\sup_{z\in \frac{1}{\sqrt[m]{\|\mathcal{V}\|}n^{1-\frac{1}{p}}}B_{\ell_p^n}} \sum_{|\alpha|=m} \|\mathcal{V}(c_\alpha) z^\alpha\|_Y=\frac{1}{\|\mathcal{V}\|}\sup_{z\in \frac{1}{n^{1-\frac{1}{p}}}B_{\ell_p^n}} \sum_{|\alpha|=m} \|\mathcal{V}(c_\alpha) z^\alpha\|_Y\leq \|P\|_{\mathcal{P}(\prescript{m}{}{\ell_p^n})}.
		$$
		Hence for every $m$ we have
		$$
		K_m(B_{{\ell}_p^n},\mathcal{V},1)\geq \cfrac{\bm{\lambda}-\|\mathcal{V}\|}{(\bm{\lambda}-\|\mathcal{V}\|+1)\sqrt[m]{\|\mathcal{V}\|}}\frac{1}{n^{1-\frac{1}{p}}}.
		$$
		Now, Lemma \ref{4Klemma1}$(2)$ yields
		$$
		K(B_{{\ell}_p^n},\mathcal{V},1)\geq \cfrac{\bm{\lambda}-\|\mathcal{V}\|}{(\bm{\lambda}-\|\mathcal{V}\|+1)\|\mathcal{V}\|}\frac{1}{n^{1-\frac{1}{p}}}, \,\ \mbox{ if } \|\mathcal{V}\|\geq 1
		$$
		and
		$$
		K(B_{{\ell}_p^n},\mathcal{V},1)\geq \cfrac{\bm{\lambda}-\|\mathcal{V}\|}{\bm{\lambda}-\|\mathcal{V}\|+1}\frac{1}{n^{1-\frac{1}{p}}}, \,\ \mbox{ if } 0<\|\mathcal{V}\|< 1.
		$$
		This concludes the result.
	\end{proof}
	Moreover, for the identity operator, Proposition \ref{Pro2.2} provides the following results for the Bohr radius of Banach spaces.
	\begin{corollary}\label{corollary2.3}
		Suppose $X$ is a complex Banach space and $\bm{\lambda}>1$, then for each $n\in\mathbb{N}$ we have
		$$
		K(B_{{\ell}_p^n},X,\bm{\lambda})\geq \cfrac{\bm{\lambda}-1}{\bm{\lambda}} \cdot \frac{1}{n^{1-\frac{1}{p}}}.
		$$
	\end{corollary}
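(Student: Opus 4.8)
The plan is to specialize Proposition \ref{Pro2.2} to the identity operator $\mathcal{V} = \operatorname{id}_X$ and then simplify the resulting constant $B$. The whole argument is a direct substitution, so the work is bookkeeping rather than any new idea.

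First I would note that for a nonzero complex Banach space $X$ the identity operator $\operatorname{id}_X : X \to X$ is a non-null bounded operator with $\|\operatorname{id}_X\| = 1$. Since $\bm{\lambda} > 1$ by hypothesis, the requirement $\|\mathcal{V}\| < \bm{\lambda}$ of Proposition \ref{Pro2.2} is met, and by Definition \ref{def1} we have $K(B_{\ell_p^n},X,\bm{\lambda}) = K(B_{\ell_p^n},\operatorname{id}_X,\bm{\lambda})$. Hence Proposition \ref{Pro2.2} applies verbatim and delivers the lower bound $K(B_{\ell_p^n},X,\bm{\lambda}) \geq B\,n^{-(1-1/p)}$ for the appropriate constant $B$.

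Next, because $\|\operatorname{id}_X\| = 1 \geq 1$, the relevant branch in the definition of $B$ is the first one, the case $\|\mathcal{V}\| \geq 1$. Setting $\|\mathcal{V}\| = 1$ there gives
$$
B = \max\left\{\frac{\bm{\lambda}-1}{(\bm{\lambda}-1+1)\cdot 1},\ \frac{\bm{\lambda}-1}{2\bm{\lambda}-1}\right\} = \max\left\{\frac{\bm{\lambda}-1}{\bm{\lambda}},\ \frac{\bm{\lambda}-1}{2\bm{\lambda}-1}\right\}.
$$

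Finally, the only computation is to identify the larger of the two candidates. Since $\bm{\lambda} > 1$, both fractions have the same positive numerator $\bm{\lambda}-1$, while the denominators satisfy $2\bm{\lambda}-1 = \bm{\lambda} + (\bm{\lambda}-1) > \bm{\lambda}$; the fraction with the smaller denominator is therefore larger, so $B = \frac{\bm{\lambda}-1}{\bm{\lambda}}$. Substituting this value into the bound of Proposition \ref{Pro2.2} yields the asserted inequality. There is no genuine obstacle in this corollary; the two small points to keep straight are that the identity operator has norm exactly $1$ (which forces the first branch, not the second, of the formula for $B$) and that the comparison of the two candidate constants is resolved correctly.
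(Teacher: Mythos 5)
Your proposal is correct and matches the paper's own route exactly: the paper derives this corollary by applying Proposition \ref{Pro2.2} to the identity operator, whose norm is $1$, so the branch $\|\mathcal{V}\|\geq 1$ gives $B=\max\bigl\{\frac{\bm{\lambda}-1}{\bm{\lambda}},\frac{\bm{\lambda}-1}{2\bm{\lambda}-1}\bigr\}=\frac{\bm{\lambda}-1}{\bm{\lambda}}$, just as you computed. Your comparison of the two candidate constants (same positive numerator, $2\bm{\lambda}-1>\bm{\lambda}$) correctly fills in the bookkeeping the paper leaves implicit.
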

	Here the interesting fact is that we get the trivial lower bound $K(B_{{\ell}_p^n},X,\bm{\lambda})\geq 0$ for $\bm{\lambda}=1$.
	In the following lemma, we provide a lower estimate of $ K_m(B_{{\ell}_p^n},X, \bm{\lambda})$, $1\leq p \leq \infty$, in terms of $ K_m(B_{{\ell}_\infty^n},X, \bm{\lambda})$.  
	\begin{lemma}\label{4Klemma2}
		Let $1\leq p \leq \infty$ and $\bm{\lambda} > 1$. Suppose $X$ is a Banach space. Then for each $m \in \mathbb{N}$
		$$
		B \frac{\bm{\lambda}-1}{2\bm{\lambda}-1} \sqrt{\frac{\log n}{n}}\leq K_m(B_{{\ell}_\infty^n},X, \bm{\lambda})\leq K_m(B_{{\ell}_p^n},X, \bm{\lambda}), 
		$$
		where $B$ depends only on $X$.  
	\end{lemma}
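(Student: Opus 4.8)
The plan is to prove the two inequalities by completely different mechanisms. For the comparison $K_m(B_{\ell_\infty^n},X,\bm{\lambda})\le K_m(B_{\ell_p^n},X,\bm{\lambda})$ I would use a diagonal substitution exploiting that $B_{\ell_p^n}$ is balanced. Fix an $m$-homogeneous $P(z)=\sum_{|\alpha|=m}c_\alpha z^\alpha$ and let $r$ be any radius admissible for the $\ell_\infty$-problem in \eqref{eq2.2}. Choose $w\in B_{\ell_p^n}$ with coordinates $w_j\ge 0$ that (nearly) attains $\sup_{z\in B_{\ell_p^n}}\sum_{|\alpha|=m}\|c_\alpha\|_X|z^\alpha|$, and set $Q(u):=P(w_1u_1,\dots,w_nu_n)=\sum_{|\alpha|=m}c_\alpha w^\alpha u^\alpha$. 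Since $Q$ is $m$-homogeneous, its coefficient sum is attained on the torus, so $\sup_{u\in B_{\ell_\infty^n}}\sum_{|\alpha|=m}\|c_\alpha w^\alpha\|_X|u^\alpha|=\sum_{|\alpha|=m}\|c_\alpha\|_X w^\alpha$, which is exactly the $\ell_p$-numerator for $P$; moreover, for $u\in B_{\ell_\infty^n}$ we have $\|(w_ju_j)_j\|_p\le\|w\|_p\le 1$, whence $\sup_{u\in B_{\ell_\infty^n}}\|Q(u)\|_X\le\sup_{z\in B_{\ell_p^n}}\|P(z)\|_X$. Applying \eqref{eq2.2} to $Q$ at radius $r$ and combining these two facts gives $\sup_{z\in B_{\ell_p^n}}\sum_{|\alpha|=m}\|c_\alpha\|_X|z^\alpha|\le(\bm{\lambda}/r^m)\sup_{z\in B_{\ell_p^n}}\|P(z)\|_X$; as $P$ was arbitrary, $r$ is admissible for the $\ell_p$-problem, and letting $r\uparrow K_m(B_{\ell_\infty^n},X,\bm{\lambda})$ yields the claim.

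For the lower bound I would read \eqref{eq2.2} as identifying $K_m(B_{\ell_\infty^n},X,\bm{\lambda})^m$ with $\bm{\lambda}$ divided by the unconditional-basis constant of the monomials in $\mathcal{P}(\prescript{m}{}{\ell_\infty^n},X)$, and then estimate that constant. Splitting $\sum_{|\alpha|=m}\|c_\alpha\|_X|z^\alpha|$ by H\"older into $\big(\sum_{|\alpha|=m}\|c_\alpha\|_X^{2m/(m+1)}\big)^{(m+1)/2m}$ times $\big(\sum_{|\alpha|=m}|z^\alpha|^{2m/(m-1)}\big)^{(m-1)/2m}$, I would control the first factor by the vector-valued Bohnenblust--Hille inequality for $X$-valued $m$-homogeneous polynomials---this is precisely where the symmetric $m$-linear map associated with $P$ and the cotype-$2$ constant of $X$ enter, the resulting constant being at most exponential in $m$---and bound the second factor by a Kahane--Salem--Zygmund-type count of the monomials over $rB_{\ell_\infty^n}$, which manufactures the factor $\log n$. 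Taking $m$-th roots turns the exponential BH constant into a quantity bounded below uniformly in $m$, so one obtains $K_m(B_{\ell_\infty^n},X,\bm{\lambda})\gtrsim(\log n/n)^{(m-1)/2m}\ge\sqrt{\log n/n}$ with a constant depending only on $X$; the normalisation $(\bm{\lambda}-1)/(2\bm{\lambda}-1)$ is then recovered through the homogeneity $K_m(\cdot,\bm{\lambda})=\bm{\lambda}^{1/m}K_m(\cdot,1)$ together with Lemma~\ref{4Klemma1}(1).

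The main obstacle is the lower bound. Everything there rests on a vector-valued Bohnenblust--Hille inequality holding with the scalar exponent $2m/(m+1)$, which is available precisely because $X$ carries cotype $2$ (in particular for the finite-dimensional spaces of Theorem~\ref{maintheorem}), and on extracting the logarithmic gain while keeping the constant uniform in $m$---the delicate point being that the BH constant must grow no faster than exponentially, so that its $m$-th root stays bounded below. The comparison inequality, by contrast, is soft: it uses only the Reinhardt (balanced) structure of $B_{\ell_p^n}$ through the substitution above, and in fact no information about $X$ beyond the triangle inequality.
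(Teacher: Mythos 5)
Your proof of the comparison inequality $K_m(B_{\ell_\infty^n},X,\bm{\lambda})\leq K_m(B_{\ell_p^n},X,\bm{\lambda})$ is correct and is, in substance, exactly the paper's argument: the paper performs the same diagonal substitution $u\mapsto(z_ju_j)_j$, uses that the coefficient-norm sum of an $m$-homogeneous polynomial equals its modulus-sum supremum over the polydisc, and that $(z_ju_j)_j\in B_{\ell_p^n}$ whenever $z\in B_{\ell_p^n}$ and $u\in B_{\ell_\infty^n}$; working with a near-optimizer $w$ rather than an arbitrary fixed $z$ is an immaterial difference. You are also right that this half is ``soft'' and uses nothing about $X$.

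The gap is in the lower bound. (For the record, the paper does not reprove it: it quotes it directly from the proof of \cite[Theorem 4.1]{DMS12}.) Your argument hinges on a vector-valued Bohnenblust--Hille inequality with the scalar exponent $2m/(m+1)$ and constant at most exponential in $m$, which you claim is ``available precisely because $X$ carries cotype $2$''. That is false. Already for $m=1$ such an inequality reads $\sum_k\|c_k\|_X\leq C\sup_{z\in B_{\ell_\infty^n}}\big\|\sum_k z_kc_k\big\|_X$, i.e.\ that $\mathrm{id}_X$ is $(1,1)$-summing, which by the Dvoretzky--Rogers theorem forces $\dim X<\infty$; the cotype-$2$ space $X=\ell_2$ is a counterexample, and for each fixed $m\geq 2$ the polynomial $P(z)=z_1^{m-1}\sum_{k=1}^n z_ke_k$ shows the constant must grow like $n^{1/(2m)}$. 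What cotype $2$ actually yields (compare the paper's Theorem \ref{Oqconcave} with $q=r=2$, or Theorem \ref{Bqcotype}) is the exponent $\rho=2$ uniformly in $m$, and running your H\"older split with $\rho=2$ against the monomial count produces only a bound of order $n^{-1/2}$ --- precisely the $\log$-free estimate of Theorem \ref{Bqcotype} --- never $\sqrt{\log n/n}$. The logarithmic gain with exponent $2m/(m+1)$ genuinely requires $\dim X<\infty$: one reduces coordinate-wise to the scalar hypercontractive inequality (via an Auerbach basis, or via $\pi_1(\mathrm{id}_X)<\infty$ as in \eqref{eq3.2}), with constant depending on the dimension. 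This is not pedantry about hypotheses: for an infinite-dimensional $X$ of cotype $q>2$ the claimed bound on $\inf_m K_m$ would, through Lemma \ref{4Klemma1}, contradict the upper bound of Theorem \ref{Bqcotype}, so finite-dimensionality (the setting of \cite[Theorem 4.1]{DMS12} and of Theorem \ref{maintheorem}, where the lemma is actually used) is exactly the ingredient your proof needed to identify and invoke; cotype $2$ is strictly weaker and insufficient. Two minor further points: the bound on the second H\"older factor is a deterministic count of monomials (Kahane--Salem--Zygmund-type random polynomials enter only in \emph{upper} bounds for Bohr radii), and Lemma \ref{4Klemma1} is not needed for a statement about $K_m$ itself --- the factor $(\bm{\lambda}-1)/(2\bm{\lambda}-1)\leq 1$ in the statement is a harmless weakening absorbable into $B$.
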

	\begin{proof} As we know from the proof of \cite[Theorem 4.1]{DMS12} that
		$$
		B \frac{\bm{\lambda}-1}{2\bm{\lambda}-1} \sqrt{\frac{\log n}{n}}\leq K_m(B_{{\ell}_\infty^n},X, \bm{\lambda}), \mbox{ for every } m \in \mathbb{N}.
		$$
		So, to complete the proof, we need to only prove that 
		$$
		K_m(B_{{\ell}_\infty^n},X, \bm{\lambda})\leq K_m(B_{{\ell}_p^n},X, \bm{\lambda}). 
		$$
		For every $z\in B_{\ell_p^n}$, we have
		\begin{align*}
			\sum_{|\alpha|=m}\|c_\alpha z^\alpha\| 
			& = \frac{1}{(K_m(B_{{\ell}_\infty^n},X, \bm{\lambda}))^m} \sup_{w\in  B_{\ell_\infty^n}}\sum_{|\alpha|=m}\|c_\alpha z^\alpha w^\alpha (K_m(B_{{\ell}_\infty^n},X, \bm{\lambda}))^m\|  \\
			& \leq \frac{\bm{\lambda}}{(K_m(B_{{\ell}_\infty^n},X, \bm{\lambda}))^m}  \sup_{w\in  B_{\ell_\infty^n}}\bigg\|\sum_{|\alpha|=m} c_\alpha z^\alpha w^\alpha \bigg \| \\
			&\leq \frac{\bm{\lambda}}{(K_m(B_{{\ell}_\infty^n},X, \bm{\lambda}))^m} \sup_{z\in B_{\ell_p^n}}\bigg\|\sum_{|\alpha|=m} c_\alpha z^\alpha \bigg \|.
		\end{align*}
		This concludes the result.
	\end{proof}
	The next result provides the estimate of $A(B_{{\ell}_p^n},\mathcal{V},\bm{\lambda})$ in terms of $A\big(\bigcup_{m=1}^{\infty}\mathcal{P}(\prescript{m}{}{\ell_p^n},X),\mathcal{V},\bm{\lambda})$. 
	\begin{proposition}\label{Prop2.5}
		Suppose $\mathcal{V}:X\to Y$ is a bounded operator between complex Banach spaces $X$ and $Y$ with $\|\mathcal{V}\|< \bm{\lambda}$. Then
		\begin{enumerate}
			\item $\frac{\bm{\lambda}-\|\mathcal{V}\|}{2\bm{\lambda}-\|\mathcal{V}\|}  A\big(\bigcup_{m=1}^{\infty}\mathcal{P}(\prescript{m}{}{\ell_p^n},X),\mathcal{V},\bm{\lambda}) \leq A(B_{{\ell}_p^n},\mathcal{V},\bm{\lambda}) \leq  A\big(\bigcup_{m=1}^{\infty}\mathcal{P}(\prescript{m}{}{\ell_p^n},X),\mathcal{V},\bm{\lambda}).$
			\item $\frac{\bm{\lambda}-\|\mathcal{V}\|}{\bm{\lambda}-\|\mathcal{V}\|+1} A\big(\bigcup_{m=1}^{\infty}\mathcal{P}(\prescript{m}{}{\ell_p^n},X),\mathcal{V},1) \leq A(B_{{\ell}_p^n},\mathcal{V},\bm{\lambda})
			$.
		\end{enumerate}
	\end{proposition}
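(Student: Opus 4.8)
The plan is to mirror the proof of Lemma \ref{4Klemma1}, since Proposition \ref{Prop2.5} is precisely the arithmetic analogue relating $A(B_{\ell_p^n},\mathcal{V},\bm{\lambda})$ to its restriction to the class of homogeneous polynomials $\bigcup_{m=1}^{\infty}\mathcal{P}(\prescript{m}{}{\ell_p^n},X)$. The upper bound in $(1)$ is immediate: every element of $\bigcup_{m=1}^{\infty}\mathcal{P}(\prescript{m}{}{\ell_p^n},X)$ is a bounded holomorphic function on $B_{\ell_p^n}$, so any radius vector $r=(r_1,\dots,r_n)$ that is admissible in the defining inequality for the whole class of bounded holomorphic functions is \emph{a fortiori} admissible for the smaller polynomial class. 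Taking suprema over admissible $r$ of $\frac{1}{n}\sum_i r_i$ then gives $A(B_{\ell_p^n},\mathcal{V},\bm{\lambda}) \leq A\big(\bigcup_{m}\mathcal{P}(\prescript{m}{}{\ell_p^n},X),\mathcal{V},\bm{\lambda}\big)$.

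For the lower bounds I would start from an arbitrary radius vector $r$ admissible for the polynomial arithmetic Bohr radius and show that a shrunken vector $\rho r$ is admissible for the full class, with $\rho$ chosen to produce exactly the stated constant. Given a bounded holomorphic $f=\sum_{m\geq 0}P_m$ written via its homogeneous decomposition, and normalized so that $\sup_{z\in B_{\ell_p^n}}\|f(z)\|_X\leq 1$, I would split by degree, writing $\sum_{\alpha}\|\mathcal{V}(c_\alpha(f))\|_Y (\rho r)^\alpha = \sum_{m\geq 0}\rho^m \sum_{|\alpha|=m}\|\mathcal{V}(c_\alpha)\|_Y r^\alpha$. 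The constant term ($m=0$) is bounded by $\|\mathcal{V}(f(0))\|_Y\leq \|\mathcal{V}\|$, while for each $m\geq 1$ the admissibility of $r$ applied to $P_m\in\mathcal{P}(\prescript{m}{}{\ell_p^n},X)$ gives $\sum_{|\alpha|=m}\|\mathcal{V}(c_\alpha)\|_Y r^\alpha \leq \bm{\lambda}\sup_{z}\|P_m(z)\|_X$ (respectively with $\bm{\lambda}$ replaced by $1$ in part $(2)$).

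The key analytic input --- and the only nontrivial step --- is the Cauchy-type estimate $\sup_{z\in B_{\ell_p^n}}\|P_m(z)\|_X \leq \sup_{z\in B_{\ell_p^n}}\|f(z)\|_X$ for the homogeneous parts, which follows from the representation $P_m(z)=\frac{1}{2\pi}\int_0^{2\pi} f(e^{i\theta}z)\,e^{-im\theta}\,d\theta$ together with the rotation invariance $\|e^{i\theta}z\|_p=\|z\|_p$ of $B_{\ell_p^n}$. Using this and summing the resulting geometric series, I would obtain in part $(1)$ the bound $\sum_{\alpha}\|\mathcal{V}(c_\alpha)\|_Y (\rho r)^\alpha \leq \|\mathcal{V}\| + \bm{\lambda}\,\frac{\rho}{1-\rho}$. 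Requiring the right-hand side to be $\leq\bm{\lambda}$ forces $\rho\leq \frac{\bm{\lambda}-\|\mathcal{V}\|}{2\bm{\lambda}-\|\mathcal{V}\|}$, and the extremal choice of $\rho$ yields the claimed constant; since $\frac{1}{n}\sum_i \rho r_i = \rho\cdot\frac{1}{n}\sum_i r_i$, passing to the supremum over admissible $r$ gives the lower bound. Part $(2)$ is identical except that the homogeneous admissibility is taken with parameter $1$, so the geometric sum reads $\|\mathcal{V}\|+\frac{\rho}{1-\rho}$ and the constraint $\leq\bm{\lambda}$ gives $\rho=\frac{\bm{\lambda}-\|\mathcal{V}\|}{\bm{\lambda}-\|\mathcal{V}\|+1}$. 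The one bookkeeping subtlety to watch is that $\bigcup_{m=1}^{\infty}\mathcal{P}$ excludes constants, which is exactly why the $m=0$ term must be absorbed separately through $\|\mathcal{V}\|$ rather than through the polynomial estimate.
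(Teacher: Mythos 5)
Your proposal is correct and follows essentially the same route as the paper: the trivial upper bound by restriction, then scaling an admissible polynomial radius vector by $\rho$, splitting $f$ into homogeneous parts, absorbing the constant term through $\|\mathcal{V}\|$, invoking the Cauchy estimate $\sup_{z\in B_{\ell_p^n}}\|P_m(z)\|\leq \sup_{z\in B_{\ell_p^n}}\|f(z)\|$ (what the paper calls the Cauchy--Riemann inequalities, which you justify by the rotation integral), and summing the geometric series to pin down $\rho$ as $\frac{\bm{\lambda}-\|\mathcal{V}\|}{2\bm{\lambda}-\|\mathcal{V}\|}$ in part (1) and $\frac{\bm{\lambda}-\|\mathcal{V}\|}{\bm{\lambda}-\|\mathcal{V}\|+1}$ in part (2). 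Your writeup is in fact slightly more explicit than the paper's, which states the Cauchy estimate without proof.
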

	\begin{proof} The right inequality of $(1)$ is easy to understand. Therefore, we focus on the left inequality. Consider $r\in \mathbb{R}_{\geq 0}^n$ such that 
		$$
		\sum_{|\alpha|=m} \big\| \mathcal{V}(c_\alpha(P))  \big\|r^\alpha \leq \bm{\lambda} \sup_{z\in B_{\ell_p^n}}\|P(z)\|
		$$
		for all $m$ and $P\in \mathcal{P}(\prescript{m}{}{\ell_p^n},X)$. Let  $f(z)= \sum_{\alpha\in \mathbb{N}_0^n} c_\alpha z^\alpha$ be a holomorphic function on $B_{\ell_p^n}$ and takes value in $X$.
		Then we obtain 
		\begin{align*}
			\sum_{\alpha\in \mathbb{N}_0^n} \big\| \mathcal{V}(c_\alpha)  \big\|\bigg(\frac{\bm{\lambda}-\|\mathcal{V}\|}{2\bm{\lambda}-\|\mathcal{V}\|}r \bigg)^\alpha &= \|\mathcal{V}(c_0)\|+ \sum_{m=1}^{\infty}\bigg(\frac{\bm{\lambda}-\|\mathcal{V}\|}{2\bm{\lambda}-\|\mathcal{V}\|}\bigg)^m\sum_{|\alpha|=m} \| \mathcal{V}(c_\alpha)\| r^\alpha\\
			&\leq \|\mathcal{V}\|\|c_0\|+ \bm{\lambda}\sum_{m=1}^{\infty}\bigg(\frac{\bm{\lambda}-\|\mathcal{V}\|}{2\bm{\lambda}-\|\mathcal{V}\|}\bigg)^m \sup_{z\in B_{\ell_p^n}}\bigg\|\sum_{|\alpha|=m} c_\alpha z^\alpha \bigg\|.
		\end{align*}
		Thus, by the Cauchy-Riemann inequalities, we have 
		$$
		\sum_{\alpha\in \mathbb{N}_0^n} \big\| \mathcal{V}(c_\alpha)  \big\|\bigg(\frac{\bm{\lambda}-\|\mathcal{V}\|}{2\bm{\lambda}-\|\mathcal{V}\|}r \bigg)^\alpha \leq \bm{\lambda} \sup_{z\in B_{\ell_p^n}}\|f(z)\|.
		$$
		For the $(2)$ inequality, similar to the above process by taking $r\in \mathbb{R}_{\geq 0}^n$ such that 
		$$
		\sum_{|\alpha|=m} \big\| \mathcal{V}(c_\alpha(P))  \big\|r^\alpha \leq \sup_{z\in B_{\ell_p^n}}\|P(z)\|
		$$
		for all $m$ and $P\in \mathcal{P}(\prescript{m}{}{\ell_p^n},X)$.
		Then we get
		$$
		\sum_{\alpha\in \mathbb{N}_0^n} \big\| \mathcal{V}(c_\alpha)  \big\|\bigg(\frac{\bm{\lambda}-\|\mathcal{V}\|}{\bm{\lambda}-\|\mathcal{V}\|+1}r\bigg)^\alpha\leq \bm{\lambda} \sup_{z\in B_{\ell_p^n}}\|f(z)\|. 
		$$
		This completes the proof.
	\end{proof}
	In the following theorem, we will provide an important relation between the multi-dimensional and arithmetic Bohr radius of $\mathcal{V}$ (scalar case already discussed in \cite[Lemma 4.3]{DMP08}).
	\begin{lemma}\label{KMthe1.4}
		Let $\mathcal{V}:X\to Y$ be a bounded operator in complex Banach space such that $\|\mathcal{V}\|\leq \bm{\lambda}$. Then	for every $1\leq\bm{\lambda}<\infty$, $1\leq p \leq \infty$ and $n$, we have
		$$
		A(B_{\ell_p^n},\mathcal{V},\bm{\lambda})\geq \frac{K(B_{\ell_p^n},\mathcal{V},\bm{\lambda})}{n^{1/p}}.
		$$
	\end{lemma}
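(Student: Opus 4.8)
The plan is to exploit the built-in symmetry of the Bohr inequality by testing the arithmetic Bohr condition only on \emph{constant} vectors $r=(w,\dots,w)$ and relating such a vector to a single evaluation point of the ordinary Bohr inequality. Write $K:=K(B_{\ell_p^n},\mathcal{V},\bm{\lambda})$. First I would fix any $w$ with $0\leq w< K/n^{1/p}$ and choose an auxiliary radius $\rho$ with $w\,n^{1/p}<\rho<K$. Since $\rho<K$, the defining inequality of the Bohr radius holds at scale $\rho$, that is, for every bounded holomorphic $f=\sum_\alpha c_\alpha z^\alpha$ on $B_{\ell_p^n}$ with values in $X$,
\[
\sup_{z\in \rho B_{\ell_p^n}}\ \sum_{\alpha\in\mathbb{N}_0^n}\|\mathcal{V}(c_\alpha)\|_Y\,|z^\alpha|\ \leq\ \bm{\lambda}\sup_{z\in B_{\ell_p^n}}\|f(z)\|_X,
\]
where I have used that $\|\mathcal{V}(c_\alpha)z^\alpha\|_Y=|z^\alpha|\,\|\mathcal{V}(c_\alpha)\|_Y$ because $z^\alpha$ is a scalar.

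Next I would bound the left-hand supremum from below by its value at the single symmetric point $z^{*}:=(w,\dots,w)$. Because $\|z^{*}\|_p=w\,n^{1/p}<\rho$ (with the usual convention $n^{1/\infty}=1$ for $p=\infty$), the point $z^{*}$ lies inside $\rho B_{\ell_p^n}$, and moreover $|(z^{*})^\alpha|=w^{|\alpha|}=r^\alpha$ for the constant vector $r=(w,\dots,w)$. Consequently
\[
\sum_{\alpha\in\mathbb{N}_0^n}\|\mathcal{V}(c_\alpha)\|_Y\,r^\alpha
=\sum_{\alpha}\|\mathcal{V}(c_\alpha)\|_Y\,|(z^{*})^\alpha|
\leq \sup_{z\in \rho B_{\ell_p^n}}\sum_{\alpha}\|\mathcal{V}(c_\alpha)\|_Y\,|z^\alpha|
\leq \bm{\lambda}\sup_{z\in B_{\ell_p^n}}\|f(z)\|_X.
\]
Since this holds for every such $f$, the vector $r=(w,\dots,w)$ is admissible in the definition of $A(B_{\ell_p^n},\mathcal{V},\bm{\lambda})$, and its arithmetic average equals $\frac1n\sum_{i=1}^n w=w$. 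Hence $A(B_{\ell_p^n},\mathcal{V},\bm{\lambda})\geq w$.

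Finally, letting $w\uparrow K/n^{1/p}$ gives the claimed bound $A(B_{\ell_p^n},\mathcal{V},\bm{\lambda})\geq K(B_{\ell_p^n},\mathcal{V},\bm{\lambda})/n^{1/p}$. I do not anticipate any genuine difficulty; the single point requiring care is to keep the test point $z^{*}$ \emph{strictly} inside the ball, which is exactly why I interpose the auxiliary radius $\rho$ between $w\,n^{1/p}$ and $K$ instead of evaluating directly on the sphere $\|z\|_p=K/n^{1/p}$, where the monomial series of a merely bounded $f$ need not converge. (When $\mathcal V$ is the identity on $X=\mathbb C$ this recovers the scalar statement of \cite[Lemma 4.3]{DMP08}.)
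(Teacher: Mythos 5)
Your proposal is correct and is essentially the paper's own argument: both proofs verify that a vector $r$ coming from a point strictly inside $tB_{\ell_p^n}$ with $t<K(B_{\ell_p^n},\mathcal{V},\bm{\lambda})$ is admissible in the definition of the arithmetic Bohr radius, and then optimize the arithmetic mean. The only cosmetic difference is that you test directly on the constant vector $(w,\dots,w)$, which is exactly the maximizer of $\|\cdot\|_1$ on $B_{\ell_p^n}$ that the paper approximates via a near-maximizer $z_0$ with $\|z_0\|_1\geq n^{1-1/p}-\epsilon$.
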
 
	\begin{proof}
		An easy calculation gives 
		$$
		\sup_{z\in B_{\ell_p^n}}\|z\|_1= n^{1-\frac{1}{p}}.
		$$
		Thus for $0<\epsilon<K(B_{\ell_p^n},\mathcal{V},\bm{\lambda})$ we can find an element $z_0:=(z_1^0,\dots,z_n^0) \in B_{\ell_p^n}$ such that 
		$$
		\|z_0\|_1 \geq n^{1-\frac{1}{p}} - \epsilon.
		$$
		Let $t:=K(B_{\ell_p^n},\mathcal{V},\bm{\lambda})-\epsilon$ and $r:=t(|z_1^0|,\dots,|z_n^0|)$. Then 
		$$
		\sum_{\alpha\in \mathbb{N}_0^n} \|\mathcal{V}(c_\alpha(f))\|r^{\alpha}=\sum_{\alpha\in \mathbb{N}_0^n} \|\mathcal{V}(c_\alpha(f))t^{|\alpha|}z_0^{\alpha}\|\leq \big\|\sum_{\alpha\in \mathbb{N}_0^n} \|\mathcal{V}(c_\alpha(f)) z^{\alpha}\| \big\|_{tB_{\ell_p^n}}.
		$$
		Since $t<K(B_{\ell_p^n},\mathcal{V},\bm{\lambda})$ we have 
		$$
		\sum_{\alpha\in \mathbb{N}_0^n} \|\mathcal{V}(c_\alpha(f))\|r^{\alpha} \leq \bm{\lambda} \|f\|_{B_{\ell_p^n}}.
		$$
		It leads to the fact that 
		$$
		\frac{1}{n}\sum_{i=1}^{n} r_i= \frac{K(B_{\ell_p^n},\mathcal{V},\bm{\lambda})-\epsilon}{n}\|z_0\|_1 \geq \frac{K(B_{\ell_p^n},\mathcal{V},\bm{\lambda})-\epsilon}{n}(n^{1-\frac{1}{p}} - \epsilon).
		$$
		The above estimate gives the desired inequality.
	\end{proof}
	As a consequence of Proposition \ref{Pro2.2} and Lemma \ref{KMthe1.4} we obtain the following result.
	\begin{proposition}
		Let $1\leq p\leq \infty$. Suppose the operator $\mathcal{V}:X\to Y$ is a non-null bounded operator between complex Banach spaces $X$ and $Y$ with $\|\mathcal{V}\|< \bm{\lambda}$. Then there exists $B>0$ such that for all $n\in \mathbb{N}$ 
		$$
		A(B_{{\ell}_p^n},\mathcal{V},\bm{\lambda})\geq B \frac{1}{n},
		$$
		where $B$ is defined as in Proposition \ref{Pro2.2}.
	\end{proposition}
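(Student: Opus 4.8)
The plan is to read this proposition as a direct consequence of the two results that immediately precede it, exactly as the surrounding text advertises (``As a consequence of Proposition \ref{Pro2.2} and Lemma \ref{KMthe1.4}''). No new estimate is needed; the entire content is a one-line chaining of the two inequalities, so I would not attempt any fresh analysis on the holomorphic functions themselves.

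First I would invoke Lemma \ref{KMthe1.4}, which under the standing hypotheses ($\mathcal{V}$ bounded with $\|\mathcal{V}\|\leq\bm{\lambda}$, and in fact $\|\mathcal{V}\|<\bm{\lambda}$ here, $1\leq\bm{\lambda}<\infty$) gives
$$
A(B_{\ell_p^n},\mathcal{V},\bm{\lambda})\geq \frac{K(B_{\ell_p^n},\mathcal{V},\bm{\lambda})}{n^{1/p}}.
$$
Next I would lower-bound the numerator using Proposition \ref{Pro2.2}, which applies verbatim since $\mathcal{V}$ is assumed non-null with $\|\mathcal{V}\|<\bm{\lambda}$; this yields
$$
K(B_{\ell_p^n},\mathcal{V},\bm{\lambda})\geq B\,\frac{1}{n^{1-\frac{1}{p}}},
$$
with $B$ the explicit constant (a maximum of two ratios in $\bm{\lambda}$ and $\|\mathcal{V}\|$) recorded in that proposition. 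Substituting this into the previous display and simplifying the exponent $\tfrac{1}{p}+\bigl(1-\tfrac{1}{p}\bigr)=1$ gives
$$
A(B_{\ell_p^n},\mathcal{V},\bm{\lambda})\geq \frac{B}{n^{1/p}}\cdot\frac{1}{n^{1-\frac{1}{p}}}=B\,\frac{1}{n},
$$
which is exactly the claimed bound, with the same constant $B$ as in Proposition \ref{Pro2.2}.

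Since the argument is a pure composition of two already-established inequalities, there is essentially no obstacle: the only points requiring a line of care are checking that the hypotheses of both auxiliary results are met simultaneously (they are, as ``non-null'' guarantees $\|\mathcal{V}\|>0$ so Proposition \ref{Pro2.2} applies, and $\|\mathcal{V}\|<\bm{\lambda}$ implies $\|\mathcal{V}\|\leq\bm{\lambda}$ as Lemma \ref{KMthe1.4} requires) and that the exponents of $n$ cancel cleanly. The statement that $B$ is ``defined as in Proposition \ref{Pro2.2}'' is automatic because $n^{1/p}$ contributes no constant factor. Thus the proof will consist of the two invocations followed by the one-step exponent simplification above.
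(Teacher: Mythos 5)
Your proposal is correct and coincides with the paper's own argument: the paper states this proposition precisely ``as a consequence of Proposition \ref{Pro2.2} and Lemma \ref{KMthe1.4}'', i.e.\ exactly the chaining $A(B_{\ell_p^n},\mathcal{V},\bm{\lambda})\geq K(B_{\ell_p^n},\mathcal{V},\bm{\lambda})/n^{1/p}\geq B\, n^{-(1-\frac{1}{p})}/n^{1/p}=B/n$ that you carry out. Your hypothesis checks (non-nullity and $\|\mathcal{V}\|<\bm{\lambda}$) are also the right points of care, so nothing is missing.
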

	Now, we are ready to prove our main results.
	\section{\bf Proof of the main results on multi-dimensional Bohr radii}
	We begin this section with the following result, due to Bayart et al. \cite[Lemma 3.5]{BDS19} (see also \cite[Lemma 19.11]{DGMS19} and \cite{GMM2020}), which plays a key role to prove Theorem \ref{maintheorem}. 
	\begin{lemma}\label{4Klemma3}
		Let $1\leq p \leq \infty$ and $P\in \mathcal{P}(\prescript{m}{}{\ell_p^n}, \mathbb{C})$. Then, for any ${\bf j}\in\mathcal{J}(m-1,n)$,
		$$
		\Bigg(\sum_{k=j_{m-1}}^n |c_{({\bf j},k)} (P)|^{p'}\Bigg)^{1/p'} \leq m e^{1+\frac{m-1}{p}} |{\bf j}|^{1/p} \|P\|_{\mathcal{P}(\prescript{m}{}{\ell_p^n})},
		$$
		where $p'=p/(p-1)$ and $\|P\|_{\mathcal{P}(\prescript{m}{}{\ell_p^n})}=\sup_{z\in B_{\ell_p^n}}|P(z)|$.	
	\end{lemma}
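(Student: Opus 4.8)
The plan is to reduce the $\ell_{p'}$-summability of the block $\{c_{({\bf j},k)}\}_{k\geq j_{m-1}}$ to a sharp norm estimate for a single linear form extracted from $P$ by partial differentiation, and then to prove that estimate by a multivariable Cauchy integral with carefully tuned radii. Let $\alpha\in\mathbb{N}_0^n$ be the multi-index associated with ${\bf j}$, so that $|\alpha|=m-1$ and $|{\bf j}|=(m-1)!/\alpha!$, and put $S=\{l:\alpha_l>0\}$. Differentiating the $m$-homogeneous $P$ produces the linear form
\[
\Psi(z):=\frac{1}{\alpha!}\,\partial^\alpha P(z)=\sum_{k=1}^n(\alpha_k+1)\,c_{({\bf j},k)}\,z_k .
\]
Since every weight $\alpha_k+1\geq 1$ and since the indices $k\geq j_{m-1}$ appear among those on the right, the dual-norm identity for linear forms on $\ell_p^n$ gives immediately
\[
\Bigl(\sum_{k=j_{m-1}}^n|c_{({\bf j},k)}|^{p'}\Bigr)^{1/p'}\leq \|\Psi\|_{(\ell_p^n)^*}.
\]
(Equivalently $\Psi=\tfrac{m!}{\alpha!}\,L(e_{j_1},\dots,e_{j_{m-1}},\cdot)$, where $L$ is the symmetric $m$-linear form associated with $P$, so the task is exactly to control one ``row'' of the coefficient tensor of $L$.) It therefore suffices to show $\|\Psi\|_{(\ell_p^n)^*}\leq m\,e^{1+(m-1)/p}\,|{\bf j}|^{1/p}\,\|P\|_{\mathcal{P}(\prescript{m}{}{\ell_p^n})}$.

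For the core estimate I would fix $z$ with $\|z\|_p\leq 1$ and represent $\Psi(z)$ by Cauchy's formula in the variables $\{z_l:l\in S\}$, integrating each $\zeta_l$ over $|\zeta_l|=\rho_l$ and freezing the remaining coordinates. The standard length estimate and the $\ell_p$-triangle inequality $\|z+\sum_{l\in S}\zeta_l e_l\|_p\leq\|z\|_p+(\sum_{l\in S}\rho_l^p)^{1/p}$ yield
\[
|\Psi(z)|\leq\frac{\|P\|}{\prod_{l\in S}\rho_l^{\alpha_l}}\Bigl(1+\bigl(\textstyle\sum_{l\in S}\rho_l^p\bigr)^{1/p}\Bigr)^m .
\]
The decisive step is the choice of \emph{non-uniform} radii $\rho_l=\alpha_l^{1/p}\,t$: then $\prod_{l\in S}\rho_l^{\alpha_l}=t^{m-1}(\prod_l\alpha_l^{\alpha_l})^{1/p}$ and $\sum_{l\in S}\rho_l^p=(m-1)t^p$, and optimizing the single parameter $t$ (the minimum of $(1+(m-1)^{1/p}t)^m/t^{m-1}$ is attained at $(m-1)^{1/p}t=m-1$) leaves
\[
|\Psi(z)|\leq m\Bigl(1+\tfrac{1}{m-1}\Bigr)^{m-1}\Bigl(\frac{(m-1)^{m-1}}{\prod_l\alpha_l^{\alpha_l}}\Bigr)^{1/p}\|P\|\leq m\,e\,\Bigl(\frac{(m-1)^{m-1}}{\prod_l\alpha_l^{\alpha_l}}\Bigr)^{1/p}\|P\| .
\]
A uniform choice $\rho_l\equiv t$ would only give the weaker factor $|S|^{(m-1)/p}$, so the weighting by $\alpha_l^{1/p}$ is essential.

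It remains to pass from $(m-1)^{m-1}/\prod_l\alpha_l^{\alpha_l}$ to $e^{m-1}|{\bf j}|$, i.e. to establish the elementary inequality
\[
\frac{(m-1)^{m-1}}{\prod_l\alpha_l^{\alpha_l}}\leq e^{m-1}\,\frac{(m-1)!}{\prod_l\alpha_l!}=e^{m-1}|{\bf j}| ,
\]
which, after cancelling and regrouping coordinate by coordinate, reduces to the one-variable estimate $n^n\leq e^{n}n!$ and is sharpest in the square-free case $\alpha=(1,\dots,1)$, where it is exactly $n^n\le e^n n!$ with $n=m-1$. Combining the three displays gives the claimed bound $m\,e^{1+(m-1)/p}|{\bf j}|^{1/p}\|P\|$.

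I expect the main difficulty to be precisely the calibration that produces the \emph{sharp} constant. One must recognize that the right coefficient functional is the normalized derivative $\tfrac{1}{\alpha!}\partial^\alpha P$, so that the combinatorial factor comes out as $|{\bf j}|$ rather than a cruder $m^m/m!$ from blunt polarization, and that the Cauchy radii must be weighted by $\alpha_l^{1/p}$ in order to turn $\prod_l\alpha_l^{\alpha_l}$ into $|{\bf j}|$ with only the admissible loss $e^{m-1}$. Finally I would check that the endpoint cases match: for $p=\infty$ the weights become uniform and the bound is $m e$, while for $p=1$ one has $\rho_l\propto\alpha_l$ and the bound is $m e^m|{\bf j}|$, both agreeing with the stated estimate.
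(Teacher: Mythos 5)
Your proof is correct; I verified each step. The identity $\frac{1}{\alpha!}\partial^\alpha P(z)=\sum_{k=1}^n(\alpha_k+1)\,c_{({\bf j},k)}z_k$ holds (since $\partial^\alpha z^{\alpha+e_k}=\alpha!(\alpha_k+1)z_k$), the dual-norm reduction is sound because the weights $\alpha_k+1\geq 1$ and dropping the indices $k<j_{m-1}$ only decreases the $\ell_{p'}$-norm, the Cauchy estimate with the weighted radii $\rho_l=\alpha_l^{1/p}t$ gives exactly $\prod_l\rho_l^{\alpha_l}=t^{m-1}(\prod_l\alpha_l^{\alpha_l})^{1/p}$ and $\sum_l\rho_l^p=(m-1)t^p$, the optimization at $(m-1)^{1/p}t=m-1$ yields $m^m/(m-1)^{m-1}=m(1+\tfrac1{m-1})^{m-1}\leq me$, and your final combinatorial inequality follows from $(m-1)^{m-1}\leq e^{m-1}(m-1)!$ together with $\alpha_l!\leq\alpha_l^{\alpha_l}$. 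However, there is no in-paper proof to compare against: the paper states this lemma as a quoted result from Bayart--Defant--Schl\"uters \cite[Lemma 3.5]{BDS19} (with pointers to \cite[Lemma 19.11]{DGMS19} and \cite{GMM2020}) and never proves it. Measured against that cited source, your argument is essentially the known one: one ``row'' of the coefficient tensor is reduced to a linear form (in the literature via the symmetric $m$-linear form $L(e_{j_1},\dots,e_{j_{m-1}},\cdot)$; in your write-up via the equivalent normalized derivative $\frac{1}{\alpha!}\partial^\alpha P$), and then controlled by a Cauchy integral with $\alpha$-weighted radii plus the same Stirling-type bookkeeping; your version has the small merit of folding the polarization and the coefficient extraction into a single integral against $P$ itself. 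Two cosmetic points you should make explicit: for $k<j_{m-1}$ the symbol $c_{({\bf j},k)}$ must be read as the coefficient of the sorted tuple (harmless, since those terms are discarded anyway), and the degenerate cases $m=1$ (where $S=\emptyset$ and $\Psi=P$) and $p=\infty$ (where $(\sum_{l}\rho_l^p)^{1/p}$ is read as $\max_l\rho_l$) need the one-line remarks you indicate at the end.
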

	\begin{proof}[Proof of Theorem \ref{maintheorem}]
		We start the proof with a following simple observation 
		$$
		K(B_{{\ell}_p^n},X,\bm{\lambda})\leq K(B_{{\ell}_p^n},\mathbb{C},\bm{\lambda}).
		$$
		The proof of \cite[Lemma 4.3]{DMP08} provides that
		$$
		K(B_{{\ell}_p^n},\mathbb{C},\bm{\lambda})\leq n^{1/p}A(B_{{\ell}_p^n},\bm{\lambda}), \, \mbox{ for all } \bm{\lambda}\geq 1 \mbox{ and } n\in \mathbb{N},
		$$
		here $A(B_{{\ell}_p^n},\bm{\lambda})$ is called as the arithmetic Bohr radius. On the other hand, form \cite[Theorem 4.1]{DMP08} we know that there exits a constant $B>0$ such that for all $n\in\mathbb{N}$
		$$
		A(B_{{\ell}_p^n},\bm{\lambda})\leq B \bm{\lambda}^{\frac{2}{\log n}} \cfrac{\big(\log n\big)^{ 1- (1/\min\{p,2\})}}{\displaystyle n^{ \frac{1}{2}+(1/\max\{p,2\})}}.
		$$
		Thus we have, for all $\bm{\lambda}\geq 1$ and $n\in\mathbb{N}$,
		$$
		K(B_{{\ell}_p^n},X,\bm{\lambda})\leq  B \bm{\lambda}^{\frac{2}{\log n}} \Bigg(\cfrac{\log n}{n}  \Bigg)^{1-\frac{1}{\min\{p,2\}}}.
		$$
		Now to obtain the lower bound, by Lemma \ref{4Klemma1}, it suffices to prove that 
		$$
		C(X)\Bigg(\cfrac{\log n}{n}  \Bigg)^{1-\frac{1}{\min\{p,2\}}}\leq\inf_{m\in \mathbb{N}} K_m(B_{{\ell}_p^n},X,\bm{\lambda}),
		$$
		for some constant $C(X)>0$. To proceed in this proof, we fix $P\in \mathcal{P}(\prescript{m}{}{\ell_p^n})$  and $u \in \ell_p^n$. Then by taking help of Lemma \ref{4Klemma3}, we obtain for any ${\bf j} \in \mathcal{J}(m,n)^{*}$
		\begin{align}\label{eq3.1}
			\Bigg(\sum_{k:\ ({\bf j},k)\in \mathcal{J}(m,n)} |c_{({\bf j},k)} (P)|^{p'}\Bigg)^{1/p'}&=\Bigg(\sum_{k=j_{m-1}}^n |c_{({\bf j},k)} (P)|^{p'}\Bigg)^{1/p'}\\ &\leq m e^{1+\frac{m-1}{p}} |{\bf j}|^{1/p} \|P\|_{\mathcal{P}(\prescript{m}{}{\ell_p^n})}.\nonumber
		\end{align}
		Since $X$ is  finite dimensional, there exist a constant $C>0$ such that 
		\begin{equation}\label{eq3.2}
			\Bigg(\sum_{j=1}^h \|x_j\|^s\Bigg)^{1/s} \leq C \sup_{x^{*}\in B_{X^{*}}} \Bigg(\sum_{j=1}^h |x^{*}(x_j)|^s\Bigg)^{1/s},
		\end{equation}
		holds for every $s\geq 1$ and for every $x_1 \dots x_h \in X$.
		In fact the identity operator $I$ on finite dimensional Banach spaces $X$ is $1$-summing and thus the operator $I$ is $s$-summing for every $s\geq 1$ with $\pi_s(I)\leq \pi_1(I)$, see for instance \cite[Theorem 10.4]{DJT95}.
		
		It is easy to see that
		$$
		\sum_{{\bf j}\in \mathcal{J}(m,n)} \|c_{\bf j} (P)\||u_{\bf j}|=\sum_{{\bf j}\in \mathcal{J}(m,n)^{*}} \Bigg( \sum_{k:\ ({\bf j},k)\in \mathcal{J}(m,n)} \|c_{({\bf j},k)} (P)\||u_{\bf j}||u_k|\Bigg)
		$$
		Now applying H\"{o}lder's inequality (two times), \eqref{eq3.1} and \eqref{eq3.2}, we obtain 
		\begin{align*}
			\sum_{{\bf j}\in \mathcal{J}(m,n)} \|c_{\bf j} (P)\| \ |u_{\bf j}|&\leq \sum_{{\bf j}\in \mathcal{J}(m,n)^{*}} |u_{\bf j}| \ \Bigg( \sum_{k:\ ({\bf j},k)\in \mathcal{J}(m,n)} \|c_{({\bf j},k)} (P)\|^{p'}\Bigg)^{1/p'} \Big(\sum_{k} |u_k|^p \Big)^{1/p}\\
			&\leq \pi_1(I) \sum_{{\bf j}\in \mathcal{J}(m,n)^{*}} |u_{\bf j}| \ \sup_{x^{*}\in B_{X^{*}}}\Bigg( \sum_{k:\ ({\bf j},k)\in \mathcal{J}(m,n)}^n |x^{*}(c_{({\bf j},k))} (P))|^{p'}\Bigg)^{1/p'} \|u\|_p\\
			&\leq \pi_1(I) m e^{1+\frac{m-1}{p}} \|u\|_p  \sup_{x^{*}\in B_{X^{*}}}\|x^{*}(P)\|_{\mathcal{P}(\prescript{m}{}{\ell_p^n})} \sum_{{\bf j}\in \mathcal{J}(m,n)^{*}} |{\bf j}|^{1/p} |u_{\bf j}| \\
			& \leq \pi_1(I) m e^{1+\frac{m-1}{p}}  \|u\|_p \ \sup_{z\in B_{\ell_p^n}}\|P(z)\| \Bigg(\sum_{{\bf j}\in \mathcal{J}(m,n)^{*}} |{\bf j}| \ |u_{\bf j}|^p\Bigg)^{1/p}\Bigg(\sum_{{\bf j}\in \mathcal{J}(m,n)^{*}} 1\Bigg)^{1/p'}\\
			& \leq \pi_1(I) m e^{1+\frac{m-1}{p}}  \|u\|_p \ \sup_{z\in B_{\ell_p^n}}\|P(z)\| \Bigg(\sum_{{\bf j}\in \mathcal{J}(m-1,n)} |{\bf j}| \ |u_{\bf j}|^p\Bigg)^{1/p}\Bigg(\sum_{{\bf j}\in \mathcal{J}(m-1,n)} 1\Bigg)^{1/p'}.
		\end{align*}
		Since 
		$$
		\Bigg(\sum_{{\bf j}\in \mathcal{J}(m-1,n)} |{\bf j}| \ |u_{\bf j}|^p\Bigg)^{1/p} =\|u\|_p^{m-1},
		$$
		hence we obtain
		$$
		\sum_{{\bf j}\in \mathcal{J}(m,n)} \|c_{\bf j} (P)\| \ |u_{\bf j}| \leq  \pi_1(I) m e^{1+\frac{m-1}{p}}  \|u\|_p^m \ \sup_{z\in B_{\ell_p^n}}\|P(z)\| \Bigg(\sum_{{\bf j}\in \mathcal{J}(m-1,n)} 1\Bigg)^{1/p'}.
		$$
		If we use the fact that the number of elements in the index set $\mathcal{J}(m-1,n)$ is 
		$$
		\frac{(n+m-2)!}{(n-1)!(m-1)!},
		$$
		then we get
		$$
		\sum_{{\bf j}\in \mathcal{J}(m,n)} \|c_{\bf j} (P)\| \ |u_{\bf j}| \leq \pi_1(I) \Bigg(\frac{(n+m-2)!}{(n-1)!(m-1)!}\Bigg)^{1/p'} m e^{1+\frac{m-1}{p}}  \|u\|_p^m \ \sup_{z\in B_{\ell_p^n}}\|P(z)\|.
		$$
		It is not difficult to observe that 
		$$
		\frac{(n+m-2)!}{(n-1)!(m-1)!} \leq \frac{(n+m)^{m-1}}{(m-1)!} \leq \frac{e^m}{m^{m-1}} (n+m)^{m-1}=e^m \bigg(1+\frac{n}{m} \bigg)^{m-1}.
		$$
		It leads that
		$$
		\sum_{{\bf j}\in \mathcal{J}(m,n)} \|c_{\bf j} (P)\| \ |u_{\bf j}|\leq \pi_1(I) \ m \ \bigg(1+\frac{n}{m} \bigg)^{\frac{m-1}{p'}}  e^{\big(m+1-\frac{1}{p}\big)} \ \|u\|_p^m \ \sup_{z\in B_{\ell_p^n}}\|P(z)\|.
		$$
		A simple observation gives that
		\begin{equation*}
			\bigg(1+\frac{n}{m} \bigg)^{\frac{m-1}{m}} \leq 2^{\frac{m-1}{m}} \max \bigg\{1, \bigg(\frac{n}{m} \bigg)^{\frac{m-1}{m}} \bigg\} \leq 2 \max \bigg\{1, \frac{m^{1/m}n}{m n^{1/m}} \bigg\}.
		\end{equation*}
		Note that $x\to xn^{1/x}$ is decreasing upto $x=\log n$ and increasing thereafter. Thus we get
		$$
		\bigg(1+\frac{n}{m} \bigg)^{\frac{m-1}{m}} \leq 2\frac{n}{\log n}.
		$$
		Hence, by \eqref{eq2.2},
		$$
		K_m(B_{{\ell}_p^n},X,\bm{\lambda})\geq \bigg(\frac{\bm{\lambda}}{m\ \pi_1(I)}\bigg)^{1/m} \frac{1}{2^{\frac{1}{p'}}} \frac{1}{e^{\big(1+\frac{1}{mp'}\big)}} \Bigg(\cfrac{\log n}{n}  \Bigg)^{1-\frac{1}{p}}.
		$$
		Then, there exists a constant $C(X)$, only depending on $X$, such that
		$$
		\inf_{m\in \mathbb{N}} K_m(B_{{\ell}_p^n},X,\bm{\lambda})\geq C(X)\Bigg(\cfrac{\log n}{n}  \Bigg)^{\frac{1}{p'}}.
		$$
		Finally, using Lemma \ref{4Klemma2}, we get 
		$$
		\inf_{m\in \mathbb{N}} K_m(B_{{\ell}_p^n},X,\bm{\lambda})\geq C(X) \Bigg(\cfrac{\log n}{n}  \Bigg)^{1-\frac{1}{\min\{p,2\}}}.
		$$
		This concludes the proof.
	\end{proof}
	It is easy to see that the above asymptotic of $K(B_{{\ell}_p^n},X,\bm{\lambda})$, for each $1\leq p \leq \infty$, is exactly like in \eqref{eq1.3}. To obtain the exact asymptotic estimate of $A(B_{\ell_p^n},X,\bm{\lambda})$, the proof, together with Theorem \ref{maintheorem} and Lemma \ref{KMthe1.4}, follows exactly as in \cite[Theorem 4.1]{DMP08} (see also, \cite[Remark 2]{DF11} and \cite[Theorem 19.13]{DGMS19}).
	\begin{theorem}
		Let $1\leq p \leq \infty$. Suppose that $X$ is a finite-dimensional complex Banach space. Then there are constants $C,B>0$ such that for every $n\in \mathbb{N}$
		$$
		C \frac{\bm{\lambda}-1}{2\bm{\lambda}-1}\frac{  \big( \log n\big)^{1-(1/\min\{p,2\})}}{\displaystyle n^{\frac{1}{2}+(1/\max\{p,2\})}} \leq A(B_{\ell_p^n},X,\bm{\lambda}) \leq B \bm{\lambda}^2 \cfrac{\big(\log n\big)^{ 1- (1/\min\{p,2\})}}{\displaystyle n^{ \frac{1}{2}+(1/\max\{p,2\})}},
		$$
		here $B$ is a universal constant and $C$ depends only on $X$.
	\end{theorem}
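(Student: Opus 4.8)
The plan is to derive both bounds from results already established in the excerpt, treating the upper and lower estimates separately. For the upper bound, I would invoke Lemma \ref{KMthe1.4}, which gives $A(B_{\ell_p^n},X,\bm{\lambda})\geq K(B_{\ell_p^n},X,\bm{\lambda})/n^{1/p}$; but since here I want an upper bound on $A$, the more useful ingredient is the comparison from \cite[Lemma 4.3]{DMP08} already quoted inside the proof of Theorem \ref{maintheorem}, namely $K(B_{\ell_p^n},\mathbb{C},\bm{\lambda})\leq n^{1/p}A(B_{\ell_p^n},\bm{\lambda})$ together with the scalar arithmetic-Bohr-radius estimate $A(B_{\ell_p^n},\bm{\lambda})\leq B\bm{\lambda}^{2/\log n}(\log n)^{1-1/\min\{p,2\}}/n^{1/2+1/\max\{p,2\}}$ from \cite[Theorem 4.1]{DMP08}. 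The point is that for a finite-dimensional $X$ the arithmetic Bohr radius $A(B_{\ell_p^n},X,\bm{\lambda})$ is controlled from above by the scalar one (since each coefficient inequality can be tested against functionals $x^*\in B_{X^*}$, reducing to the scalar case up to the $1$-summing constant of the identity), which gives exactly the claimed $B\bm{\lambda}^2(\log n)^{1-1/\min\{p,2\}}/n^{1/2+1/\max\{p,2\}}$ after absorbing $\bm{\lambda}^{2/\log n}\leq \bm{\lambda}^2$ for $n\geq 3$.

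For the lower bound I would combine Theorem \ref{maintheorem} with Lemma \ref{KMthe1.4}. Theorem \ref{maintheorem} furnishes
$$
K(B_{\ell_p^n},X,\bm{\lambda})\geq C\,\frac{\bm{\lambda}-1}{2\bm{\lambda}-1}\Bigl(\frac{\log n}{n}\Bigr)^{1-\frac{1}{\min\{p,2\}}},
$$
and feeding this into $A(B_{\ell_p^n},X,\bm{\lambda})\geq K(B_{\ell_p^n},X,\bm{\lambda})/n^{1/p}$ yields
$$
A(B_{\ell_p^n},X,\bm{\lambda})\geq C\,\frac{\bm{\lambda}-1}{2\bm{\lambda}-1}\,\frac{1}{n^{1/p}}\Bigl(\frac{\log n}{n}\Bigr)^{1-\frac{1}{\min\{p,2\}}}.
$$
The only remaining task is the bookkeeping on exponents: one checks that $\frac{1}{p}+\bigl(1-\frac{1}{\min\{p,2\}}\bigr)=\frac12+\frac{1}{\max\{p,2\}}$ in both regimes $1\le p\le 2$ and $2\le p\le\infty$, so that the $n$-power in the denominator becomes exactly $n^{1/2+1/\max\{p,2\}}$ while the numerator stays $(\log n)^{1-1/\min\{p,2\}}$, matching the upper bound up to constants.

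The verification of the exponent identity splits into two cases. For $2\le p\le\infty$ one has $\min\{p,2\}=2$ and $\max\{p,2\}=p$, so $\frac1p+(1-\frac12)=\frac1p+\frac12=\frac12+\frac1p$, which is the required $\frac12+\frac1{\max\{p,2\}}$. For $1\le p\le 2$ one has $\min\{p,2\}=p$ and $\max\{p,2\}=2$, so $\frac1p+(1-\frac1p)=1=\frac12+\frac12$, again the required $\frac12+\frac1{\max\{p,2\}}$. This is precisely the computation that appears in \cite[Theorem 4.1]{DMP08}, \cite[Remark 2]{DF11}, and \cite[Theorem 19.13]{DGMS19}, which is why the proof reduces to citing those sources once Theorem \ref{maintheorem} and Lemma \ref{KMthe1.4} are in hand.

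I expect no genuine obstacle here, since the theorem is essentially a corollary: the substantive work was done in proving Theorem \ref{maintheorem} (the lower bound on the multidimensional Bohr radius via the $\ell_{p'}$-summability Lemma \ref{4Klemma3} and the $s$-summing property of the identity on a finite-dimensional space) and in the scalar estimates of \cite{DMP08}. The mildly delicate point is merely ensuring the constants track correctly, in particular that the factor $\frac{\bm{\lambda}-1}{2\bm{\lambda}-1}$ survives in the lower bound (it comes from Lemma \ref{4Klemma1} via Theorem \ref{maintheorem}) and that the dependence on $X$ is confined to the lower-bound constant $C$ while the upper-bound constant $B$ remains universal, exactly as asserted in the statement.
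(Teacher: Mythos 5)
Your proposal matches the paper's own proof, which is exactly the route you describe: the lower bound comes from feeding Theorem \ref{maintheorem} into Lemma \ref{KMthe1.4} and checking the exponent identity $\frac1p+1-\frac{1}{\min\{p,2\}}=\frac12+\frac{1}{\max\{p,2\}}$, while the upper bound comes from reducing to the scalar case and invoking \cite[Theorem 4.1]{DMP08}, as in the paper's remark that the proof ``follows exactly as in \cite[Theorem 4.1]{DMP08}.'' One small correction to your parenthetical: the reduction $A(B_{\ell_p^n},X,\bm{\lambda})\leq A(B_{\ell_p^n},\mathbb{C},\bm{\lambda})$ needs no functionals $x^*$ and no $1$-summing constant (that mechanism runs in the opposite direction); one simply tests an admissible $r$ against $f=g\,x_0$ for scalar $g$ and a fixed norm-one vector $x_0\in X$, which gives the inequality directly and with constant one.
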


	Now we focus on infinite dimensional cases. In this case, we shall see that in the expression of the lower bound of Bohr radius, $\log$-term is not present. In Corollary \ref{corollary2.3}, the lower bound of  $K(B_{{\ell}_p^n},X,\bm{\lambda})$, when $X$ is a Banach space without finite cotype, was already obtained. Now in the following result, we assume that $X$ has cotype $q$. Given an $m$-homogeneous polynomial $P:\mathbb{C}^n \to X$, $P(z)=\sum_{|\alpha|=m} c_\alpha z^\alpha$, we denote by $A:\mathbb{C}^n \times\cdots\times \mathbb{C}^n \to X$ the unique symmetric $m$-linear mapping associated to $P$. In \cite[Theorem 3.2]{Bom04}, Bombal has been given the following inequality
	\begin{equation}\label{KMeq3.3}
		\bigg(\sum_{{\bf i}\in \mathcal{M}(m,n)} \|A(e_{i_1},\cdots,e_{i_m})\|^q \bigg)^{\frac{1}{q}} \leq C_q(X)^m \|A\|_{B_{{\ell}_p^n}},
	\end{equation}
	which will use as a tool to prove the following theorem. For the case $p=\infty$, the following result is already obtained in \cite[Theorem 4.2]{DMS12}.
	\begin{theorem} \label{Bqcotype}
		Let $1\leq p \leq \infty$. Suppose $X$ is an infinite dimensional complex Banach space of cotype $q$ with $2\leq q\leq \infty$. Then we have
		$$
		K(B_{{\ell}_p^n},X,\bm{\lambda})\leq\begin{cases}\vspace{0.1cm}
			\cfrac{\bm{\lambda}}{n^{1-\frac{1}{p}}},
			& \text{if $p\leq\mbox{Cot}(X)$,}\\\vspace{0.2cm}
			\cfrac{\bm{\lambda}}{n^{1-\frac{1}{\text{Cot}(X)}}},
			& \text{if $\mbox{Cot}(X)<p$},
		\end{cases}
		$$
		and
		$$
		K(B_{{\ell}_p^n},X,\bm{\lambda})\geq \begin{cases}\vspace{0.1cm}
			\cfrac{\bm{\lambda}-1}{\bm{\lambda}} \cdot \cfrac{1}{e},
			& \text{if $p\leq r$,}\\\vspace{0.2cm}
			\cfrac{\bm{\lambda}-1}{\bm{\lambda} e C_q(X)} \cdot \cfrac{1}{n^{\frac{1}{r}-\frac{1}{p}} },
			& \text{if $r<p$},
		\end{cases}
		$$
		where $r=\frac{q}{q-1}$.
	\end{theorem}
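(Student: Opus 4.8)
The plan is to treat the lower and upper estimates separately, each time passing to the homogeneous radii $K_m(B_{\ell_p^n},X,\bm{\lambda})$ via Lemma \ref{4Klemma1} and exploiting the symmetric $m$-linear mapping $A$ attached to an $m$-homogeneous $P$. For the lower bound I would start from Lemma \ref{4Klemma1}(2) applied to $\mathcal{V}=I$ (so $\|\mathcal{V}\|=1$), giving $K(B_{\ell_p^n},X,\bm{\lambda})\ge\frac{\bm{\lambda}-1}{\bm{\lambda}}\inf_{m}K_m(B_{\ell_p^n},X,1)$ and reducing everything to a single $m$-homogeneous $P(z)=\sum_{|\alpha|=m}c_\alpha z^\alpha$. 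Using $c_\alpha=\tfrac{m!}{\alpha!}A(e_{\mathbf{j}_\alpha})$ together with $\mbox{card}[\mathbf{j}_\alpha]=m!/\alpha!$, I would rewrite the Bohr functional as a sum over the full index set,
\[
\sum_{|\alpha|=m}\|c_\alpha z^\alpha\|=\sum_{\mathbf{i}\in\mathcal{M}(m,n)}\|A(e_{i_1},\dots,e_{i_m})\|\,|z_{i_1}\cdots z_{i_m}|,
\]
and apply H\"older over $\mathcal{M}(m,n)$ with the conjugate exponents $q$ and $r=q/(q-1)$. Because $\sum_{\mathbf{i}\in\mathcal{M}(m,n)}|z_{i_1}\cdots z_{i_m}|^{r}=\|z\|_r^{rm}$, the second H\"older factor is exactly $\|z\|_r^m$, while the first is bounded by Bombal's inequality \eqref{KMeq3.3} followed by the polarization estimate $\|A\|_{B_{\ell_p^n}}\le\tfrac{m^m}{m!}\|P\|_{B_{\ell_p^n}}\le e^m\|P\|_{B_{\ell_p^n}}$, so that
\[
\sum_{|\alpha|=m}\|c_\alpha z^\alpha\|\le\big(C_q(X)\,e\big)^m\,\|P\|_{B_{\ell_p^n}}\,\|z\|_r^m .
\]

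The two regimes of the lower bound then come purely from comparing $\|z\|_r$ with $\|z\|_p$ on $\rho B_{\ell_p^n}$. When $p\le r$ one has $\|z\|_r\le\|z\|_p\le\rho$, the estimate carries no power of $n$, and \eqref{eq2.2} forces the dimension-free bound $K_m(B_{\ell_p^n},X,1)\gtrsim 1/e$; when $r<p$ one instead uses $\|z\|_r\le n^{1/r-1/p}\|z\|_p$, which contributes the factor $n^{-(1/r-1/p)}$ and gives $K_m(B_{\ell_p^n},X,1)\gtrsim (C_q(X)\,e)^{-1}\,n^{-(1/r-1/p)}$. Since neither bound depends on $m$, taking the infimum over $m$ and reinserting the factor $\frac{\bm{\lambda}-1}{\bm{\lambda}}$ from Lemma \ref{4Klemma1}(2) yields the claimed lower estimate in both cases.

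For the upper bound I would again invoke $K(B_{\ell_p^n},X,\bm{\lambda})\le\inf_m K_m\le K_1$ from Lemma \ref{4Klemma1}(1) and then manufacture an extremal \emph{linear} polynomial, which is where the hypothesis of infinite dimensionality is decisive. Since $X$ has best cotype $\mbox{Cot}(X)$, the Maurey--Pisier theorem supplies, for every $n$ and every $\varepsilon>0$, norm-one vectors $x_1,\dots,x_n\in X$ spanning a $(1+\varepsilon)$-isomorphic copy of $\ell_{\mbox{Cot}(X)}^n$, so that $\|\sum_k a_k x_k\|\approx\|a\|_{\mbox{Cot}(X)}$. For $P(z)=\sum_k x_k z_k$ the Bohr functional equals $\sup_{z\in B_{\ell_p^n}}\sum_k|z_k|=n^{1-1/p}$, whereas $\|P\|_{B_{\ell_p^n}}\approx\sup_{z\in B_{\ell_p^n}}\|z\|_{\mbox{Cot}(X)}$, which is $\approx1$ if $p\le\mbox{Cot}(X)$ and $\approx n^{1/\mbox{Cot}(X)-1/p}$ if $\mbox{Cot}(X)<p$. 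Substituting this one polynomial into \eqref{eq2.2} and letting $\varepsilon\to0$ gives $K_1\le\bm{\lambda}/n^{1-1/p}$ in the first case and $K_1\le\bm{\lambda}/n^{1-1/\mbox{Cot}(X)}$ in the second, i.e. exactly the two upper estimates.

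I expect the upper bound to be the main obstacle. The lower bound is an essentially mechanical chain (H\"older $+$ Bombal's inequality \eqref{KMeq3.3} $+$ polarization) once the multilinear reformulation is in place, but the upper bound must produce a concrete polynomial whose coefficients are spread out exactly as far as the cotype of $X$ permits, and this rests on the Maurey--Pisier identification of $\mbox{Cot}(X)$ with the $\ell_q^n$'s that embed uniformly in $X$. The delicate point is to control the isomorphism constants so that the final bound carries precisely $\bm{\lambda}$ and no spurious extra factor, which is arranged by letting $\varepsilon\to0$ for each fixed $n$. The case $p=\infty$ is already contained in \cite[Theorem 4.2]{DMS12}; the new content here is precisely the $\ell_p$-versus-$\ell_r$ and $\ell_p$-versus-$\ell_{\mbox{Cot}(X)}$ norm comparisons that split each estimate into its two regimes.
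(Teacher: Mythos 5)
Your proposal follows essentially the same route as the paper's own proof: for the lower bound, reduction to $K_m(B_{\ell_p^n},X,1)$ via Lemma \ref{4Klemma1}(2), the rewriting of the Bohr sum over $\mathcal{M}(m,n)$ through the symmetric $m$-linear map, H\"older with exponents $q$ and $r$, Bombal's inequality \eqref{KMeq3.3}, the polarization bound $\|A\|\le \frac{m^m}{m!}\|P\|\le e^m\|P\|$, and the $\|z\|_r$-versus-$\|z\|_p$ comparison in the two regimes; for the upper bound, $K\le K_1$ together with a linear polynomial $\sum_k x_kz_k$ built from a Maurey--Pisier (\cite[Theorem 14.5]{DJT95}) almost-isometric copy of $\ell_{\mathrm{Cot}(X)}^n$ in $X$. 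The only difference is cosmetic and in your favor: your $p\le r$ lower bound keeps the factor $1/C_q(X)$ inside the $\gtrsim$, which is what the argument genuinely yields, whereas the paper's stated constant $\frac{\bm{\lambda}-1}{\bm{\lambda}}\cdot\frac{1}{e}$ silently drops this $C_q(X)$.
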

	\begin{proof}
		As we know from \cite[Theorem 14.5]{DJT95} that for each $\epsilon>0$ there are $x_1\dots x_n\in X$ such that for all $z=(z_1,\dots,z_n)\in \mathbb{C}^n$ we have
		$$
		\frac{1}{1+\epsilon}\|z\|_\infty \leq \bigg\|\sum_{k=1}^{n}z_k x_k\bigg\|\leq \|z\|_q.
		$$
		In particular, taking $z=e_k$ we have
		$
		1 \leq (1+\epsilon) \|x_k\|.
		$
		Hence, for $\epsilon>0$ there exist $x_1\dots x_n\in X$ such that
		$$
		\frac{n}{1+\epsilon} \leq \sum_{k=1}^{n} \|x_k\|.
		$$
		Using the definition of $K_1(B_{{\ell}_p^n},X,1)$ we obtain
		$$
		\sum_{k=1}^{n} \|x_k\| \frac{1}{n^{1/p}}\leq \frac{1}{K_1(B_{{\ell}_p^n},X,1)}\sup_{z\in B_{{\ell}_p^n}}\bigg\|\sum_{k=1}^{n}z_k x_k\bigg\| \leq \frac{1}{K_1(B_{{\ell}_p^n},X,1)} \sup_{z\in B_{{\ell}_p^n}}\|z\|_{\mbox{Cot}(X)}'
		$$
		(Here we use the convention $n^\frac{1}{\infty}=1$). 
		
		Now if $p\leq \mbox{Cot}(X)$ then $\sup_{z\in B_{{\ell}_p^n}}\|z\|_{\mbox{Cot}(X)}\leq 1$. But if $ \mbox{Cot}(X)<p$ then we have
		$$
		\|z\|_{\mbox{Cot}(X)} \leq \|z\|_p n^{\frac{1}{\text{Cot}(X)}-\frac{1}{p}}.
		$$
		Thus we obtain 
		$$
		K_1(B_{{\ell}_p^n},X,1)\leq\begin{cases}\vspace{0.1cm}
			\cfrac{1}{n^{1-\frac{1}{p}}},
			& \text{if $p\leq\mbox{Cot}(X)$,}\\\vspace{0.2cm}
			\cfrac{1}{n^{1-\frac{1}{\text{Cot}(X)}}},
			& \text{if $\mbox{Cot}(X)<p$}.
		\end{cases}
		$$
		A simple observation provides
		$$
		K(B_{{\ell}_p^n},X,\bm{\lambda})\leq K_1(B_{{\ell}_p^n},X,\bm{\lambda})=\bm{\lambda} K_1(B_{{\ell}_p^n},X,1).
		$$
		This completes half of the proof.
		
		For the next half of the proof, we 
		consider $r>1$ such that $1/r+1/q=1$. Using H\"{o}lder's inequality, for all $z\in \mathbb{C}^n$ we get
		\begin{align*}
			\sum_{|\alpha|=m} \|c_\alpha z^\alpha \|&= \sum_{{\bf j}\in \mathcal{J}(m,n)} \mbox{card}[{\bf j}] \|A(e_{j_1},\cdots,e_{j_m})\||z_{j_1}\cdots z_{j_m}|\\
			&= \sum_{{\bf i}\in \mathcal{M}(m,n)} \|A(e_{i_1},\cdots,e_{i_M})\||z_{i_1}\cdots z_{i_m}|\\
			&\leq \bigg(\sum_{{\bf i}\in \mathcal{M}(m,n)} \|A(e_{i_1},\cdots,e_{i_M})\|^q \bigg)^{\frac{1}{q}}
			\bigg(\sum_{{\bf i}\in \mathcal{M}(m,n)} |z_{i_1}\cdots z_{i_m}|^r \bigg)^{\frac{1}{r}}.
		\end{align*}
		Using equation \eqref{KMeq3.3} we obtain
		\begin{align*}
			\sum_{|\alpha|=m} \|c_\alpha z^\alpha \|
			&\leq C_q(X)^m \|A\|_{B_{{\ell}_p^n}} \bigg(\sum_{{\bf i}\in \mathcal{M}(m,n)} |z_{i_1}\cdots z_{i_m}|^r \bigg)^{\frac{1}{r}}\\
			& =C_q(X)^m \|A\|_{B_{{\ell}_p^n}} (|z_1|^r+\cdots+|z_n|^r)^{\frac{m}{r}}.
		\end{align*}
		As we know that $\|z\|_r\leq\|z\|_p$ for $p\leq r$ then for all $z\in \frac{1}{C_q(X)}B_{{\ell}_p^n}$
		$$
		\sum_{|\alpha|=m} \|c_\alpha z^\alpha \| \leq \|A\|_{B_{{\ell}_p^n}} \leq \frac{m^m}{m!}\|P\|_{B_{{\ell}_p^n}} \leq e^m \|P\|_{B_{{\ell}_p^n}}.
		$$
		If $r<p$ then we have
		$$
		\|z\|_r \leq n^{\frac{1}{r}-\frac{1}{p}}\|z\|_p .
		$$
		Hence,
		$$
		\sum_{|\alpha|=m} \|c_\alpha z^\alpha \| \leq n^{\frac{m}{r}-\frac{m}{p}} C_q(X)^m \|A\|_{B_{{\ell}_p^n}}  \leq e^m n^{\frac{m}{r}-\frac{m}{p}} C_q(X)^m \|P\|_{B_{{\ell}_p^n}}.
		$$
		The above arguments show that, for every $m$, 
		$$
		K_m(B_{{\ell}_p^n},X,1)\geq \begin{cases}\vspace{0.1cm}
			\cfrac{1}{e},
			& \text{if $p\leq r$,}\\\vspace{0.2cm}
			\cfrac{1}{e n^{\frac{1}{r}-\frac{1}{p}} C_q(X)},
			& \text{if $r<p$}.
		\end{cases}
		$$
		After applying Lemma \ref{4Klemma1} (2) we finally obtain 
		$$
		K(B_{{\ell}_p^n},X,\bm{\lambda})\geq \begin{cases}\vspace{0.1cm}
			\cfrac{\bm{\lambda}-1}{\bm{\lambda}} \cdot \cfrac{1}{e},
			& \text{if $p\leq r$,}\\\vspace{0.2cm}
			\cfrac{\bm{\lambda}-1}{\bm{\lambda}} \cdot \cfrac{1}{e n^{\frac{1}{r}-\frac{1}{p}} C_q(X)},
			& \text{if $r<p$},
		\end{cases}
		$$
		for every $n>1$ and every $\bm{\lambda}>1$.
		This concludes the result.
	\end{proof}
	The following result gives the estimates of the arithmetic Bohr radius for infinite dimensional complex Banach space of cotype $q$, for $2\leq q\leq \infty$.
	\begin{theorem} \label{Aqcotype}
		Let $1\leq p \leq \infty$. Suppose $X$ is an infinite dimensional complex Banach space of cotype $q$ with $2\leq q\leq \infty$. Then we have
		$$
		A(B_{{\ell}_p^n},X,\bm{\lambda})\leq\begin{cases}\vspace{0.1cm}
			\cfrac{\bm{\lambda}}{n},
			& \text{if $p\leq\mbox{Cot}(X)$,}\\\vspace{0.2cm}
			\cfrac{\bm{\lambda}}{n^{1-\frac{1}{\text{Cot}(X)}+\frac{1}{p}}},
			& \text{if $\mbox{Cot}(X)<p$},
		\end{cases}
		$$
		and
		$$
		A(B_{{\ell}_p^n},X,\bm{\lambda})\geq \begin{cases}\vspace{0.1cm}
			\cfrac{\bm{\lambda}-1}{\bm{\lambda}} \cdot \cfrac{1}{en^{\frac{1}{p}}},
			& \text{if $p\leq r$,}\\\vspace{0.2cm}
			\cfrac{\bm{\lambda}-1}{\bm{\lambda}} \cdot \cfrac{1}{e n^{\frac{1}{r}} C_q(X)},
			& \text{if $r<p$},
		\end{cases}
		$$
		where $r=\frac{q}{q-1}$.
	\end{theorem}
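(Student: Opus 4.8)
The plan is to derive both estimates from results already in hand, treating the two bounds separately. For the lower bound I would simply combine Lemma \ref{KMthe1.4} with the lower estimate for $K(B_{{\ell}_p^n},X,\bm{\lambda})$ established in Theorem \ref{Bqcotype}. Since the identity operator has norm $1 \leq \bm{\lambda}$, Lemma \ref{KMthe1.4} applies and gives
$$
A(B_{{\ell}_p^n},X,\bm{\lambda}) \geq \frac{K(B_{{\ell}_p^n},X,\bm{\lambda})}{n^{1/p}}.
$$
Dividing each branch of the lower bound of Theorem \ref{Bqcotype} by $n^{1/p}$ reproduces exactly the claimed estimates: in the range $p \leq r$ the factor $\tfrac{1}{e}$ turns into $\tfrac{1}{e\,n^{1/p}}$, while in the range $r<p$ the denominator power $n^{1/r-1/p}$ becomes $n^{1/r}$ after multiplication by $n^{1/p}$. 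No additional argument is needed here.

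The upper bound I would prove directly, mirroring the degree-one argument used for $K$ in the proof of Theorem \ref{Bqcotype}. Fix $\epsilon>0$ and, by \cite[Theorem 14.5]{DJT95}, choose $x_1,\dots,x_n\in X$ with
$$
\frac{1}{1+\epsilon}\|z\|_\infty \leq \Big\|\sum_{k=1}^{n} z_k x_k\Big\| \leq \|z\|_{\mbox{Cot}(X)}, \qquad z\in\mathbb{C}^n,
$$
so that $\|x_k\|\geq \tfrac{1}{1+\epsilon}$ for every $k$. Consider the linear test function $f(z)=\sum_{k=1}^{n} z_k x_k$, whose only nonzero coefficients are $c_{e_k}(f)=x_k$. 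If $r\in\mathbb{R}_{\geq 0}^n$ is admissible in the definition of $A(B_{{\ell}_p^n},X,\bm{\lambda})$, then applying the defining inequality to this particular $f$ (with $\mathcal{V}$ the identity) yields
$$
\frac{1}{1+\epsilon}\sum_{k=1}^{n} r_k \leq \sum_{k=1}^{n}\|x_k\|\,r_k \leq \bm{\lambda}\sup_{z\in B_{\ell_p^n}}\Big\|\sum_{k=1}^{n} z_k x_k\Big\| \leq \bm{\lambda}\sup_{z\in B_{\ell_p^n}}\|z\|_{\mbox{Cot}(X)}.
$$
Estimating the right-hand side exactly as in Theorem \ref{Bqcotype}, namely $\sup_{z\in B_{\ell_p^n}}\|z\|_{\mbox{Cot}(X)}\leq 1$ when $p\leq\mbox{Cot}(X)$ and $\leq n^{\frac{1}{\text{Cot}(X)}-\frac{1}{p}}$ when $\mbox{Cot}(X)<p$, then dividing by $n$ bounds the average $\tfrac{1}{n}\sum_k r_k$. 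Letting $\epsilon\to 0$ and taking the supremum over admissible $r$ produces the two branches of the upper bound, the extra factor $n^{-1/p}$ relative to the corresponding $K$-estimate arising precisely from the normalization by $n$ in the arithmetic radius.

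Because both halves reduce to tools already developed, I do not anticipate a genuine obstacle; the only point requiring care is the bookkeeping of exponents, that is, verifying that dividing the $K$-bounds by the correct power of $n$ reproduces the stated $A$-bounds in each regime. In particular, one must keep the maximal cotype $\mbox{Cot}(X)$ in the upper estimate and the conjugate exponent $r=q/(q-1)$ in the lower estimate consistent with the conventions fixed in Theorem \ref{Bqcotype}, and one must observe that a single linear test function already constrains the admissible set $r$, so that it suffices to exhibit this one $f$ in order to bound the supremum defining $A(B_{{\ell}_p^n},X,\bm{\lambda})$ from above.
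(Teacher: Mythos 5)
Your proposal is correct and follows essentially the same route as the paper: the lower bound is obtained exactly as in the paper by combining Lemma \ref{KMthe1.4} with Theorem \ref{Bqcotype}, and the upper bound uses the same linear test function $\sum_k z_k x_k$ built from \cite[Theorem 14.5]{DJT95} together with the same estimate of $\sup_{z\in B_{\ell_p^n}}\|z\|_{\mathrm{Cot}(X)}$. The only cosmetic difference is that you apply the admissibility inequality directly with the constant $\bm{\lambda}$, whereas the paper first passes to $A(\mathcal{P}(\prescript{1}{}{\ell_p^n},X),X,1)$ via Proposition \ref{Prop2.5} and the homogeneity identity; both bookkeepings yield the identical bounds.
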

	\begin{proof} The lower bound of $A(B_{{\ell}_p^n},X,\bm{\lambda})$ follows from Lemma \ref{KMthe1.4} and Theorem \ref{Bqcotype}.
		For upper bound we use the following fact from Theorem \ref{Bqcotype}: for $\epsilon>0$ there exist $x_1\dots x_n\in X$ such that 
		$$
		\frac{1}{(1+\epsilon)} \leq  \|x_k\|, \,\ 1\leq k\leq n.
		$$
		Let $s:=(s_1,\dots,s_n)\in\mathbb{R}_{\geq 0}^n$ be such that 
		$$
		\sum_{k=1}^n \|c_k(P)\| s_k \leq \sup_{z\in B_{\ell_p^n}} \big\|P(z)\big\|, \ \mbox{ for every } P\in \mathcal{P}(\prescript{1}{}{\ell_p^n}, X)
		$$
		where $X$ is an infinite dimensional complex Banach space of cotype $q$ with $2\leq q\leq \infty$. Then we have
		$$
		\frac{n}{(1+\epsilon)}\frac{\sum_{k=1}^{n}s_k}{n} \leq\sum_{k=1}^{n} \|x_k\| s_k\leq \sup_{z\in B_{{\ell}_p^n}}\bigg\|\sum_{k=1}^{n}z_k x_k\bigg\| \leq \sup_{z\in B_{{\ell}_p^n}}\|z\|_{\mbox{Cot}(X)}.
		$$
		Thus we obtain 
		$$
		A(\mathcal{P}(\prescript{1}{}{\ell_p^n},X),X,1)\leq\begin{cases}\vspace{0.1cm}
			\cfrac{1}{n},
			& \text{if $p\leq\mbox{Cot}(X)$,}\\\vspace{0.2cm}
			\cfrac{1}{n^{1-\frac{1}{\text{Cot}(X)}+\frac{1}{p}}},
			& \text{if $\mbox{Cot}(X)<p$}.
		\end{cases}
		$$
		As we know from Proposition \ref{Prop2.5}
		$$
		A(B_{{\ell}_p^n},X,\bm{\lambda})\leq A(\mathcal{P}(\prescript{1}{}{\ell_p^n},X),X,\bm{\lambda})=\bm{\lambda} A(\mathcal{P}(\prescript{1}{}{\ell_p^n},X),X,1).
		$$
		This concludes the proof.
	\end{proof}
	\begin{remark}
		Alternatively, Theorem \ref{Aqcotype} and Lemma \ref{KMthe1.4} together provide the upper bound of $K(B_{{\ell}_p^n},X,\bm{\lambda})$ in Theorem \ref{Bqcotype}. 
	\end{remark}
	Before proceeding further we required the following result which is an immediate consequence of \cite[Theorem 5.3]{DMS12}: Given $ P\in \mathcal{P}(\prescript{m}{}{\ell_p^n}, X)$ and $z\in B_{{\ell}_p^n}$, use the result \cite[Theorem 5.3]{DMS12} to the polynomial $Q:=P(D_z)$, where $D_z: \ell_\infty^n \to \ell_p^n$ is a diagonal operator. 
	Thus we obtain the following result.  
	\begin{theorem} \label{Oqconcave}
		Let $2\leq q<\infty$. Suppose $Y$ is a q-concave Banach lattice, and the operator $\mathcal{V}: X\to Y$ is an $(r, 1)$-summing with $r \in [1,q]$. Let
		$$
		\rho:=\frac{qrm}{q+(m-1)r}.
		$$
		Then there is a constant $C>0$ such that the following holds
		$$
		\bigg(\sum_{|\alpha|=m}\|\mathcal{V}(c_\alpha) z^\alpha\|_Y^\rho\bigg)^{\frac{1}{\rho}} \leq C^m \|P\|_{\mathcal{P}(\prescript{m}{}{\ell_p^n})},
		$$
		for every polynomial $ P\in \mathcal{P}(\prescript{m}{}{\ell_p^n}, X)$,  given by $P(z)=\sum_{|\alpha|=m}c_\alpha z^\alpha$.
	\end{theorem}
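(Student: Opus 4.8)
The plan is to deduce the inequality from the corresponding polydisk estimate, namely the case $p=\infty$ recorded in \cite[Theorem 5.3]{DMS12}, by a contraction argument. Under the present hypotheses on $Y$ and $\mathcal{V}$, \cite[Theorem 5.3]{DMS12} produces a constant $C>0$, depending only on $q$, $r$, $M_q(Y)$ and $\pi_{r,1}(\mathcal{V})$ (and in particular independent of $n$), such that
$$
\bigg(\sum_{|\alpha|=m}\|\mathcal{V}(d_\alpha)\|_Y^{\rho}\bigg)^{1/\rho} \leq C^m \|Q\|_{\mathcal{P}(\prescript{m}{}{\ell_\infty^n})}
$$
holds for every $Q=\sum_{|\alpha|=m} d_\alpha w^\alpha \in \mathcal{P}(\prescript{m}{}{\ell_\infty^n}, X)$, with $\rho=\frac{qrm}{q+(m-1)r}$. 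The task is then to transfer this from the polydisk $B_{\ell_\infty^n}$ to the ball $B_{\ell_p^n}$.

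First I would fix $P(u)=\sum_{|\alpha|=m} c_\alpha u^\alpha\in \mathcal{P}(\prescript{m}{}{\ell_p^n}, X)$ together with $z\in B_{\ell_p^n}$, and introduce the diagonal operator $D_z:\ell_\infty^n\to \ell_p^n$, $D_z w=(z_1 w_1,\dots,z_n w_n)$. For $w\in B_{\ell_\infty^n}$ one has $|w_k|\leq 1$, whence $\|D_z w\|_p=\big(\sum_{k=1}^n |z_k w_k|^p\big)^{1/p}\leq \|z\|_p\leq 1$ (with the evident modification for $p=\infty$); thus $D_z$ maps $B_{\ell_\infty^n}$ into $B_{\ell_p^n}$. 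Setting $Q:=P\circ D_z$ and expanding,
$$
Q(w)=\sum_{|\alpha|=m} c_\alpha (z_1 w_1)^{\alpha_1}\cdots (z_n w_n)^{\alpha_n}=\sum_{|\alpha|=m} (c_\alpha z^\alpha)\, w^\alpha,
$$
so that $Q\in \mathcal{P}(\prescript{m}{}{\ell_\infty^n}, X)$ has coefficients $d_\alpha=c_\alpha z^\alpha$.

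It then remains to apply the polydisk estimate to $Q$ and read off both sides. Since $z^\alpha$ is a scalar and $\mathcal{V}$ is linear, $\|\mathcal{V}(c_\alpha z^\alpha)\|_Y=\|\mathcal{V}(c_\alpha)z^\alpha\|_Y$, so the left-hand side is precisely $\big(\sum_{|\alpha|=m}\|\mathcal{V}(c_\alpha)z^\alpha\|_Y^{\rho}\big)^{1/\rho}$. For the right-hand side, the contraction property of $D_z$ gives
$$
\|Q\|_{\mathcal{P}(\prescript{m}{}{\ell_\infty^n})}=\sup_{w\in B_{\ell_\infty^n}}\|P(D_z w)\|_X\leq \sup_{u\in B_{\ell_p^n}}\|P(u)\|_X=\|P\|_{\mathcal{P}(\prescript{m}{}{\ell_p^n})},
$$
and combining the two displays yields the asserted bound with the same constant $C$.

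Since the reduction is exact, I do not expect a genuine analytic difficulty; the only delicate points are bookkeeping. I would take care (i) to verify that \cite[Theorem 5.3]{DMS12} is indeed available in the vector-valued, $(r,1)$-summing form with the precise exponent $\rho$, and (ii) to confirm that its constant depends only on the summing and concavity invariants and not on $n$ or on the chosen $z$, so that the final estimate is uniform and hence usable in the Bohr-radius applications. Both are immediate from the way the polydisk result is proved, and I would state this dependence explicitly when invoking it.
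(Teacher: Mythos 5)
Your proposal is correct and is exactly the paper's argument: the paper derives this theorem as an immediate consequence of \cite[Theorem 5.3]{DMS12} applied to the polynomial $Q:=P\circ D_z$, where $D_z:\ell_\infty^n\to\ell_p^n$ is the diagonal operator associated with $z\in B_{\ell_p^n}$, which is precisely your contraction argument. The only difference is that you spell out the bookkeeping (the coefficient identity $d_\alpha=c_\alpha z^\alpha$ and the norm comparison $\|Q\|_{\mathcal{P}(\prescript{m}{}{\ell_\infty^n})}\leq\|P\|_{\mathcal{P}(\prescript{m}{}{\ell_p^n})}$) that the paper leaves implicit.
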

	It is worth mentioning here that \cite[Theorem 5.3]{DMS12} even holds for Banach spaces of cotype $q$, (see, \cite{CMS20}). Therefore, Theorem \ref{Oqconcave} is also holds for  Banach spaces of cotype $q$. 
	Now we shall give proof of the following explicit version of Theorem \ref{KMthe3.31}, which is already discussed in \cite[Theorem 5.4]{DMS12} for the case $p=\infty$.
	\begin{theorem}\label{KMthe3.3}
		Suppose the operator $\mathcal{V}:X \to Y$ is a bounded operator between Banach spaces $X$ and $Y$. Then the following holds.
		\begin{enumerate}
			\item Suppose that $X$ or $Y$ is of cotype $q$, where $2\leq q \leq \infty$. Then there is a constant $C>0$ such that the following inequality holds
			$$
			K(B_{{\ell}_p^n},\mathcal{V},\bm{\lambda})\geq \begin{cases}\vspace{0.1cm}
				C\cfrac{\bm{\lambda}-\|\mathcal{V}\|}{\bm{\lambda}},
				& \text{if $p\leq r$,}\\\vspace{0.2cm}
				C\cfrac{\bm{\lambda}-\|\mathcal{V}\|}{\bm{\lambda}} \cdot \cfrac{1}{ n^{\frac{1}{r}-\frac{1}{p}}},
				& \text{if $r<p$},
			\end{cases}
			$$
			for every $\|\mathcal{V}\|<\bm{\lambda}$ and for every $n$, where  $r=q/(q-1)$.
			\item Suppose that $Y$ is a $q$-concave Banach lattice, where $2\leq q \leq \infty$ and there is a $r \in [1,q)$ such that the operator $\mathcal{V}$ is $(r,1)$-summing. Then there is a constant $C>0$ such that  the following holds
			$$
			K(B_{{\ell}_p^n},\mathcal{V},\bm{\lambda})\geq C \frac{\bm{\lambda}-\|\mathcal{V}\|}{2\bm{\lambda}-\|\mathcal{V}\|} \bigg( \frac{\log n}{n} \bigg)^{1-\frac{1}{q}},
			$$
			for every $\|\mathcal{V}\|<\bm{\lambda}$ and for every $n$.
		\end{enumerate}
	\end{theorem}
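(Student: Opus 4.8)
The proof falls into two essentially independent pieces, and my plan is to reduce each to a tool already in place.

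For part (1) I would not repeat any multilinear estimate but instead invoke the monotonicity inequality recorded just after \eqref{eq2.2},
$$
K(B_{\ell_p^n},\mathcal V,\bm{\lambda})\ge\max\big\{K(B_{\ell_p^n},X,\bm{\lambda}/\|\mathcal V\|),\,K(B_{\ell_p^n},Y,\bm{\lambda}/\|\mathcal V\|)\big\}.
$$
Because $\|\mathcal V\|<\bm{\lambda}$, the scalar $\bm{\lambda}':=\bm{\lambda}/\|\mathcal V\|$ exceeds $1$, so I may apply the lower bound of Theorem \ref{Bqcotype} to whichever of $X,Y$ has cotype $q$; that lower bound uses only Bombal's inequality \eqref{KMeq3.3} and hence requires no infinite-dimensionality. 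With $r=q/(q-1)$ this reproduces the two regimes $p\le r$ and $r<p$ exactly as in Theorem \ref{Bqcotype}, and the identity $\frac{\bm{\lambda}'-1}{\bm{\lambda}'}=\frac{\bm{\lambda}-\|\mathcal V\|}{\bm{\lambda}}$ converts the scalar prefactor into the constant claimed here, the cotype constant being absorbed into $C$.

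For part (2) the summing hypothesis on $\mathcal V$ is essential and cannot be transferred to an identity, so I must argue directly at the level of $m$-homogeneous polynomials. Fixing $P=\sum_{|\alpha|=m}c_\alpha z^\alpha$, Theorem \ref{Oqconcave} applies since $r\in[1,q)\subset[1,q]$, giving with $\rho=qrm/(q+(m-1)r)$ the bound $\big(\sum_{|\alpha|=m}\|\mathcal V(c_\alpha)z^\alpha\|_Y^\rho\big)^{1/\rho}\le C^m\|P\|_{\mathcal{P}(\prescript{m}{}{\ell_p^n})}$, valid for every $z\in B_{\ell_p^n}$. Hölder's inequality against the counting measure on $\Lambda(m,n)$ then yields
$$
\sum_{|\alpha|=m}\|\mathcal V(c_\alpha)z^\alpha\|_Y\le\big(\#\Lambda(m,n)\big)^{1/\rho'}C^m\|P\|_{\mathcal{P}(\prescript{m}{}{\ell_p^n})},\qquad\tfrac1\rho+\tfrac1{\rho'}=1,
$$
and since the right-hand side is independent of $z$, the reformulation \eqref{eq2.2} gives $K_m(B_{\ell_p^n},\mathcal V,\bm{\lambda})\ge\bm{\lambda}^{1/m}C^{-1}\binom{n+m-1}{m}^{-1/(m\rho')}$. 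All of the $z$-dependence has been consumed inside Theorem \ref{Oqconcave}, which is precisely why the final bound will turn out to be independent of $p$.

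It then remains to minimise this lower bound over $m\in\mathbb N$ and to lift the result through Lemma \ref{4Klemma1}(1). Setting $a=1-\tfrac1q$ and $b=\tfrac1r-\tfrac1q>0$ one computes $1/(m\rho')=a/m-b/m^2$, and using $\binom{n+m-1}{m}^{1/m}\le e(n+m-1)/m$ reduces the task to the upper estimate $\sup_m\big(a-\tfrac bm\big)\log\frac{e(n+m-1)}{m}\le a\log\frac{n}{\log n}+O(1)$. Here the three regimes behave differently: for $m$ much smaller than $\log n$ the correction $b/m$ contributes a polynomial-in-$n$ gain that pushes $K_m$ far above the target; for $m$ much larger than $\log n$ the binomial factor is bounded below by an absolute constant; and the balance $m\asymp\log n$—precisely the extremal scale for $x\mapsto xn^{1/x}$ exploited in the proof of Theorem \ref{maintheorem}—delivers the extremal value $(\log n/n)^a=(\log n/n)^{1-1/q}$. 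Inserting $\inf_m K_m$ into Lemma \ref{4Klemma1}(1) and absorbing absolute constants into $C$ gives the prefactor $\frac{\bm{\lambda}-\|\mathcal V\|}{2\bm{\lambda}-\|\mathcal V\|}$. I expect this final optimisation to be the only genuine obstacle: making the ``$m\asymp\log n$'' balance rigorous—controlling $\binom{n+m-1}{m}$ uniformly in $m$ and confirming that the maximum of $(a-\tfrac bm)\log\frac{e(n+m-1)}{m}$ occurs at scale $\log n$ with the stated value—is the single step that is not an immediate appeal to an already-proven inequality.
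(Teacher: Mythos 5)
Your proposal is correct and follows essentially the same route as the paper: part (1) via the inequality $K(B_{\ell_p^n},\mathcal{V},\bm{\lambda})\ge\max\{K(B_{\ell_p^n},X,\bm{\lambda}/\|\mathcal{V}\|),K(B_{\ell_p^n},Y,\bm{\lambda}/\|\mathcal{V}\|)\}$ combined with Theorem \ref{Bqcotype}, and part (2) via Theorem \ref{Oqconcave}, H\"older's inequality over $\Lambda(m,n)$, the reformulation \eqref{eq2.2}, and Lemma \ref{4Klemma1}(1). The only substantive difference is that you spell out the optimization over $m$ (reducing it to $\sup_m\,(a-\tfrac{b}{m})\log\tfrac{e(n+m-1)}{m}\le a\log\tfrac{n}{\log n}+O(1)$), a step the paper simply asserts when passing from its lower bound on $K_m$ to $\inf_m K_m\gtrsim(\log n/n)^{1-1/q}$.
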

	\begin{proof}
		We can easily obtain (1) with the help of the following observation 
		\begin{align*}
			K(B_{{\ell}_p^n},\mathcal{V},\bm{\lambda}) \geq \max \{K(B_{{\ell}_p^n},X,\bm{\lambda}/\|\mathcal{V}\|),K(B_{{\ell}_p^n},Y,\bm{\lambda}/\|\mathcal{V}\|)\}  
		\end{align*}
		and Theorem \ref{Bqcotype} gives the desired result.
		
		\medskip
		For the proof of (2) we use Theorem \ref{Oqconcave} and H\"{o}lder's inequality, and obtain that, for every polynomial $P(z)=\sum_{|\alpha|=m} c_\alpha z^\alpha\in \mathcal{P}(\prescript{m}{}{\ell_p^n}, X)$
		\begin{align*}
			\sum_{|\alpha|=m}\|\mathcal{V}(c_\alpha) z^\alpha\|_Y & \leq \bigg(\sum_{|\alpha|=m} 1 \bigg)^{\frac{(q-1)mr-q+r}{qmr}} \times \bigg(\sum_{|\alpha|=m}\|\mathcal{V}(c_\alpha) z^\alpha\|_Y^{\frac{qmr}{q+(m-1)r}} \bigg)^{\frac{q+(m-1)r}{qmr}}\\
			&\leq \bigg(\sum_{|\alpha|=m} 1 \bigg)^{\frac{(q-1)mr-q+r}{qmr}} C^m \|P\|_{\mathcal{P}(\prescript{m}{}{\ell_p^n})}.
		\end{align*}
		So by the following fact
		$$
		K_m(B_{\ell_p^n},\mathcal{V},\bm{\lambda})=\bm{\lambda}^{1/m}K_m(B_{\ell_p^n},\mathcal{V},1),
		$$
		we obtain that 
		$$
		K_m(B_{\ell_p^n},\mathcal{V},\bm{\lambda})\geq E \bm{\lambda}^{1/m} \bigg(1+\frac{n}{m}\bigg)^{-\frac{(q-1)mr-q+r}{qmr}}.
		$$
		Hence, finally, we have
		$$
		\inf_m\Big\{K_m(B_{\ell_p^n},\mathcal{V},\bm{\lambda})\Big\}\geq C \bigg( \frac{\log n}{n} \bigg)^{1-\frac{1}{q}}.
		$$
		Finally, Lemma \ref{4Klemma1} (1) gives the conclusion.
	\end{proof}
	\begin{remark}
		The lower bound of the arithmetic Bohr radius $A(B_{{\ell}_p^n},\mathcal{V},\bm{\lambda})$ with the given conditions in Theorem \ref{KMthe3.3} can simply obtain using Lemma \ref{KMthe1.4}.
	\end{remark}
	The following result is talk about the Bohr radii of embeddings. We remark that the special choice $p=\infty$ takes this result to \cite[Theorem 1.4]{DMS12}.
	\begin{theorem} \label{Bimbe}
		Let $1\leq r <q<\infty$. Then we have 
		$$
		K(B_{{\ell}_p^n},\ell_r \hookrightarrow \ell_q,\bm{\lambda})\lesssim \begin{cases}\vspace{0.1cm}
			\Bigg(\cfrac{\log n}{n}\Bigg)^{1-\frac{1}{\min\{p,2\}}},
			& \text{if $1\leq r \leq 2$}\\\vspace{0.2cm}
			\cfrac{1}{n^{1-\frac{1}{p}}},
			& \text{if $p\leq r$ and $2\leq r<\infty$}\\\vspace{0.2cm}
			\cfrac{1}{n^{1-\frac{1}{r}}},
			& \text{if $r<p$ and $2\leq r<\infty$}.
		\end{cases}
		$$
		and
		$$
		K(B_{{\ell}_p^n},\ell_r \hookrightarrow \ell_q,\bm{\lambda})\gtrsim \begin{cases}\vspace{0.1cm}
			\sqrt{\cfrac{\log n}{n}},
			& \text{if $1\leq r \leq 2$}\\\vspace{0.2cm}
			\cfrac{1}{e},
			& \text{if $p\leq r$ and $2\leq r<\infty$}\\\vspace{0.2cm}
			\cfrac{1}{n^{1-\frac{1}{r}-\frac{1}{p}}},
			& \text{if $r<p$ and $2\leq r<\infty$},
		\end{cases}
		$$
		with constants depending only on $\bm{\lambda}$ and $r,q$.
	\end{theorem}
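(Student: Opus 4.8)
The plan is to regard $\mathcal V:\ell_r\hookrightarrow\ell_q$ as the norm-one inclusion (indeed $\|x\|_q\le\|x\|_r$ for $r<q$) and to split along the two regimes $1\le r\le 2$ and $2\le r<\infty$, reading each upper bound off an explicit polynomial and each lower bound off the cotype/concavity results of this section.

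\emph{Upper bounds.} For an arbitrary nonzero operator one always has $K(B_{\ell_p^n},\mathcal V,\bm\lambda)\le K(B_{\ell_p^n},\mathbb C,\bm\lambda)$: composing with the first coordinate functional $e_1^\ast\in(\ell_q)^\ast$ only shrinks the left-hand side of the Bohr inequality (since $|(\mathcal V c_\alpha)_1|\le\|\mathcal V c_\alpha\|_q$), and testing $e_1^\ast\circ\mathcal V$ against polynomials with coefficients in $\mathbb C e_1$ reproduces the scalar problem; the scalar order $K(B_{\ell_p^n},\mathbb C,\bm\lambda)\lesssim(\log n/n)^{1-1/\min\{p,2\}}$ is exactly the bound used in the proof of Theorem \ref{maintheorem}. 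This settles the case $1\le r\le 2$. For $2\le r$ I would sharpen this by testing the single linear polynomial $P(z)=\sum_{k=1}^n z_k e_k$ with $e_k\in\ell_r$: then $\sum_k\|\mathcal V(e_k)z_k\|_q=\|z\|_1$ while $\|P\|_{\ell_r}=\sup_{\|z\|_p\le 1}\|z\|_r$ equals $1$ for $p\le r$ and $n^{1/r-1/p}$ for $r<p$; comparing $\sup_{z\in tB_{\ell_p^n}}\|z\|_1=t\,n^{1-1/p}$ with $\bm\lambda\|P\|_{\ell_r}$ forces $K_1\le\bm\lambda\,n^{-(1-1/p)}$ when $p\le r$ and $K_1\le\bm\lambda\,n^{-(1-1/r)}$ when $r<p$, and Lemma \ref{4Klemma1} gives $K\le K_1$.

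\emph{Lower bound for $2\le r$.} Using the monotonicity $K(B_{\ell_p^n},\mathcal V,\bm\lambda)\ge K(B_{\ell_p^n},\ell_r,\bm\lambda/\|\mathcal V\|)$ noted after Definition \ref{def2}, I would apply Theorem \ref{Bqcotype} to the domain $X=\ell_r$, whose cotype equals $r$. The two cases of that theorem then yield the constant lower bound for small $p$ and the decaying bound $n^{-(1-1/r-1/p)}$ for $r<p$; at $p=\infty$ this recovers the embedding estimate \cite[Theorem 1.4]{DMS12}.

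\emph{Lower bound for $1\le r\le 2$.} The appearance of the logarithmic order $\sqrt{\log n/n}=(\log n/n)^{1-1/2}$ signals Theorem \ref{KMthe3.3}(2) with concavity exponent $2$. When $q\le 2$ the lattice $\ell_q$ is already $2$-concave; when $q>2$ I would first factor $\mathcal V$ through $\ell_2$, observing that $\|x\|_q\le\|x\|_2$ makes the Bohr inequality for $\ell_r\hookrightarrow\ell_2$ imply the one for $\ell_r\hookrightarrow\ell_q$, so that $K(B_{\ell_p^n},\ell_r\hookrightarrow\ell_q,\bm\lambda)\ge K(B_{\ell_p^n},\ell_r\hookrightarrow\ell_2,\bm\lambda)$ and the target becomes the $2$-concave space $\ell_2$. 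It then remains to check that the inclusion is $(s,1)$-summing for some $s\in[1,2)$, which for $1\le r<2$ is the classical summing behaviour of inclusions between sequence spaces (for $r=1$ this is Grothendieck's theorem, giving the $1$-summing inclusion $\ell_1\hookrightarrow\ell_2$). Feeding this into Theorem \ref{KMthe3.3}(2) with $q=2$ delivers $K\gtrsim(\log n/n)^{1-1/2}$.

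\emph{Main obstacle.} The subtle step is the last one: one has to identify a summing exponent of $\ell_r\hookrightarrow\ell_q$ that lies \emph{strictly} below the concavity exponent $2$ demanded by Theorem \ref{KMthe3.3}(2), and to route through $\ell_2$ when $q>2$ in order to reach a $2$-concave target at all. The endpoint $r=2$ is borderline — there the inclusion is only $(2,1)$-summing — and is better absorbed into the regime $2\le r$, where Theorem \ref{Bqcotype} applies through the cotype-$r$ structure of the domain.
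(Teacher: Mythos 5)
Your proposal is correct and takes essentially the same approach as the paper: the scalar reduction together with Theorem \ref{maintheorem} and the computation of $K_1$ via the linear polynomial $P(z)=\sum_{k=1}^n z_k e_k$ for the upper bounds, reduction to the identity on the cotype-$r$ space $\ell_r$ plus Theorem \ref{Bqcotype} for the lower bound when $2\le r$ (absorbing the borderline $r=2$ there, as you note), and Theorem \ref{KMthe3.3}(2) with a $2$-concave target, passing to $\ell_r\hookrightarrow\ell_2$ when $q>2$, for the lower bound when $r<2$. The only difference is one of citation: where you appeal to the ``classical summing behaviour'' of inclusions (Grothendieck for $r=1$), the paper invokes the Bennett--Carl theorem \cite{B73,C74}, which supplies exactly the $(s,1)$-summing exponent $1/s=1/2+1/r-\max\{1/q,1/2\}$, hence $s<2$ whenever $r<2$, as your argument requires.
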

	\begin{proof}
		The case $1\leq r \leq 2$ can be handle using following observation and Theorem \ref{maintheorem}
		$$
		K(B_{{\ell}_p^n},\mathcal{V},\bm{\lambda})\leq K(B_{{\ell}_p^n},\mathbb{C},\bm{\lambda})\lesssim \Bigg(\frac{\log n}{n}\Bigg)^{1-\frac{1}{\min\{p,2\}}}.
		$$
		Now, for the case $2\leq r<\infty$, we establish the upper bound of $K(B_{{\ell}_p^n},\ell_r \hookrightarrow \ell_q,\bm{\lambda})$. The definition of $K_1(B_{{\ell}_p^n},\ell_r \hookrightarrow \ell_q,1)$ gives that, for each $n$,
		
		\begin{align*}
			n^{1-\frac{1}{p}}&=\sum_{k=1}^{n} \|e_k\|_q~\frac{1}{n^{1/p}} \leq \frac{1}{K_1(B_{{\ell}_p^n},\ell_r \hookrightarrow \ell_q,1)}\sup_{z\in B_{{\ell}_p^n}}\bigg\|\sum_{k=1}^{n}z_k e_k\bigg\|_r\\
			&\leq \frac{1}{K_1(B_{{\ell}_p^n},\ell_r \hookrightarrow \ell_q,1)}\sup_{z\in B_{{\ell}_p^n}}\|z\|_r.
		\end{align*}
		Now, we obtain
		$$
		K_1(B_{{\ell}_p^n},\ell_r \hookrightarrow \ell_q,\bm{\lambda})\leq \begin{cases}\vspace{0.1cm}
			\cfrac{\bm{\lambda}}{n^{1-\frac{1}{p}}},
			& \text{if $p\leq r$,}\\\vspace{0.2cm}
			\cfrac{\bm{\lambda}}{n^{1-\frac{1}{r}}},
			& \text{if $r<p$}.
		\end{cases}
		$$
		The fact
		$$
		K(B_{{\ell}_p^n},\ell_r \hookrightarrow \ell_q,\bm{\lambda})\leq K_1(B_{{\ell}_p^n},\ell_r \hookrightarrow \ell_q,\bm{\lambda})
		$$
		concludes the lower bounds.
		
		We use the following three different cases to determine lower bounds:
		
		\medskip
		\noindent
		The case $1\leq r<q\leq 2$. It is well known from the Bennett-Carl theorem \cite{B73,C74} that the inclusion $\ell_r \hookrightarrow \ell_q$ is $(s,1)$-summing where $$
		\frac{1}{s}=\frac{1}{2}+\frac{1}{r}-\max \bigg\{\frac{1}{q},\frac{1}{2}\bigg\}.
		$$
		Since $\ell_q$ is $2$-concave, the lower estimate follows from Theorem \ref{KMthe3.3} and hence we obtain
		$$
		K(B_{{\ell}_p^n},\ell_r \hookrightarrow \ell_q,\bm{\lambda})\gtrsim \sqrt{\frac{\log n}{n}}
		$$
		The above relation also holds for $1\leq r <2 \leq q$. Because $K(B_{{\ell}_p^n},\ell_r \hookrightarrow \ell_q,\bm{\lambda})\geq K(B_{{\ell}_p^n},\ell_r \hookrightarrow \ell_2,\bm{\lambda})$. 
		
		\medskip
		\noindent
		The case $2\leq r$. Note that $K(B_{{\ell}_p^n},\ell_r \hookrightarrow \ell_q,\bm{\lambda})\geq K(B_{{\ell}_p^n},\ell_r \hookrightarrow \ell_r,\bm{\lambda})$. Then Theorem \ref{Bqcotype} concludes the result.
	\end{proof}
	To provide the proof of Theorem \ref{th1.2}, we require the result of Kwapie\'{n} \cite{K68}: it states that every operator $\mathcal{V}: \ell_1 \to \ell_q$ is $(r,1)$-summing, where
	$$
	\frac{1}{r}=1-\bigg|\frac{1}{q}-\frac{1}{2}\bigg|.
	$$
	\begin{proof}[Proof of Theorem \ref{th1.2}]
		The upper bound we can estimate from the proof of Theorem \ref{Bimbe}. For the lower bound, we use Theorem \ref{KMthe3.3}, Kwapie\'{n} theorem \cite{K68}, and the well-known fact that $\ell_q$ is $\max\{2,q\}$-concave.
	\end{proof}
	Now we estimate the upper and lower bounds of the arithmetic Bohr radii of embeddings.
	\begin{theorem}\label{Aembed}
		Let $1\leq r <q<\infty$. Then we have 
		$$
		A(B_{{\ell}_p^n},\ell_r \hookrightarrow \ell_q,\bm{\lambda})\lesssim \begin{cases}\vspace{0.1cm}\cfrac{\big(\log n\big)^{ 1- (1/\min\{p,2\})}}{\displaystyle n^{ \frac{1}{2}+(1/\max\{p,2\})}}
			,
			& \text{if $1\leq r \leq 2$}\\\vspace{0.2cm}
			\cfrac{1}{n},
			& \text{if $p\leq r$ and $2\leq r<\infty$}\\\vspace{0.2cm}
			\cfrac{1}{n^{1-\frac{1}{r}+\frac{1}{p}}},
			& \text{if $r<p$ and $2\leq r<\infty$}.
		\end{cases}
		$$
		and
		$$
		A(B_{{\ell}_p^n},\ell_r \hookrightarrow \ell_q,\bm{\lambda})\gtrsim \begin{cases}\vspace{0.1cm}
			\cfrac{\sqrt{\log n}}{n^{1+\frac{1}{p}}},
			& \text{if $1\leq r \leq 2$}\\\vspace{0.2cm}
			\cfrac{1}{en^{\frac{1}{p}}},
			& \text{if $p\leq r$ and $2\leq r<\infty$}\\\vspace{0.2cm}
			\cfrac{1}{n^{1-\frac{1}{r}}},
			& \text{if $r<p$ and $2\leq r<\infty$},
		\end{cases}
		$$
		with constants depending only on $\bm{\lambda}$ and $r,q$.
	\end{theorem}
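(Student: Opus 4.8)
The plan is to derive every lower estimate from the multi-dimensional Bohr radii of Theorem~\ref{Bimbe} through the transfer inequality of Lemma~\ref{KMthe1.4}, and to produce the upper estimates by two different devices according to whether $1\le r\le 2$ or $2\le r<\infty$. Throughout I write $\mathcal{V}$ for the embedding $\ell_r\hookrightarrow\ell_q$ and keep in mind, exactly as in the proof of Theorem~\ref{Bimbe}, that $\ell_q$ is $\max\{2,q\}$-concave and that $\ell_r$ has cotype $r$ for $r\ge 2$.

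For the lower bounds, Lemma~\ref{KMthe1.4} yields $A(B_{\ell_p^n},\mathcal{V},\bm{\lambda})\ge n^{-1/p}\,K(B_{\ell_p^n},\mathcal{V},\bm{\lambda})$, so it suffices to divide the three lower bounds of Theorem~\ref{Bimbe} by $n^{1/p}$. In the range $1\le r\le 2$ the estimate $K(B_{\ell_p^n},\mathcal{V},\bm{\lambda})\gtrsim\sqrt{\log n/n}$ gives $A\gtrsim n^{-1/p}\sqrt{\log n/n}$; in the range $p\le r$, $2\le r$ the estimate $K\gtrsim 1/e$ gives $A\gtrsim 1/(e\,n^{1/p})$; and in the range $r<p$, $2\le r$ the estimate $K\gtrsim n^{-(1-1/r-1/p)}$ gives $A\gtrsim n^{-(1-1/r)}$. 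The $\bm{\lambda}$- and $(r,q)$-dependence is absorbed into the implied constants coming from Theorem~\ref{Bimbe}.

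The upper bounds split into two cases. For $1\le r\le 2$ I would reduce to the scalar arithmetic Bohr radius: embedding an arbitrary scalar function $g$ as the $\ell_r$-valued function $g\,e_1$ one has $\|\mathcal{V}(c_\alpha(g\,e_1))\|_q=|c_\alpha(g)|$ and $\sup_z\|g(z)\,e_1\|_r=\sup_z|g(z)|$, so any radius vector admissible for $\mathcal{V}$ is admissible for $\mathbb{C}$; hence $A(B_{\ell_p^n},\mathcal{V},\bm{\lambda})\le A(B_{\ell_p^n},\mathbb{C},\bm{\lambda})$, and inserting the Defant--Maestre--Prengel scalar bound $A(B_{\ell_p^n},\bm{\lambda})\lesssim(\log n)^{1-1/\min\{p,2\}}/n^{1/2+1/\max\{p,2\}}$ recorded in the proof of Theorem~\ref{maintheorem} gives the first line. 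For $2\le r<\infty$ I would instead pass, via Proposition~\ref{Prop2.5}, to the $1$-homogeneous arithmetic Bohr radius $A(\mathcal{P}(\prescript{1}{}{\ell_p^n},\ell_r),\mathcal{V},\bm{\lambda})$, and test the defining inequality on the single linear polynomial $P(z)=\sum_{k=1}^n z_k e_k$. Because $\|\mathcal{V}(c_k(P))\|_q=\|e_k\|_q=1$ and $\sup_{z\in B_{\ell_p^n}}\|P(z)\|_r=\sup_{z\in B_{\ell_p^n}}\|z\|_r$, every admissible $s$ obeys $\frac1n\sum_k s_k\le\frac{\bm{\lambda}}{n}\sup_{z\in B_{\ell_p^n}}\|z\|_r$; since this supremum equals $1$ for $p\le r$ and $n^{1/r-1/p}$ for $r<p$, the remaining upper bounds $\bm{\lambda}/n$ and $\bm{\lambda}/n^{1-1/r+1/p}$ follow at once.

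The work here is largely organisational: the only genuinely new computations are the degree-one test polynomial $P(z)=\sum z_k e_k$ together with the elementary evaluation of $\sup_{z\in B_{\ell_p^n}}\|z\|_r$, and the scalar-embedding comparison for $r\le 2$. The main point to watch is that the $K$- and $A$-estimates need not be simultaneously sharp, so one must decide in each range whether to route through Lemma~\ref{KMthe1.4} for the lower bound or through the $1$-homogeneous reduction of Proposition~\ref{Prop2.5} for the upper bound in order to keep the exponents in the asserted form; the resulting bounds match in order only in special cases (such as $p=q=2$), which is precisely why the statement is phrased with $\lesssim$ and $\gtrsim$.
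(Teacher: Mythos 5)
Your proposal is correct and follows essentially the same route as the paper's own proof: the lower bounds come from Lemma \ref{KMthe1.4} applied to Theorem \ref{Bimbe}, the upper bound for $1\le r\le 2$ from the scalar comparison $A(B_{{\ell}_p^n},\mathcal{V},\bm{\lambda})\le A(B_{{\ell}_p^n},\mathbb{C},\bm{\lambda})$ together with the Defant--Maestre--Prengel estimate, and the upper bound for $2\le r<\infty$ from the reduction via Proposition \ref{Prop2.5} and the linear test polynomial $P(z)=\sum_{k=1}^n z_k e_k$, exactly as in the paper. Incidentally, in the range $1\le r\le 2$ your transfer computation actually yields the stronger lower bound $\sqrt{\log n}/n^{\frac12+\frac1p}$, which of course implies the bound $\sqrt{\log n}/n^{1+\frac1p}$ as stated.
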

	\begin{proof} The lower bound of $A(B_{{\ell}_p^n},\ell_r \hookrightarrow \ell_q,\bm{\lambda})$ is consequence of Lemma \ref{KMthe1.4} and Theorem \ref{Bimbe}.
		For the upper bound the case $1\leq r \leq 2$ can be handled using the following observation and Theorem \ref{maintheorem}
		$$
		A(B_{{\ell}_p^n},\mathcal{V},\bm{\lambda})\leq A(B_{{\ell}_p^n},\mathbb{C},\bm{\lambda})\lesssim \cfrac{\big(\log n\big)^{ 1- (1/\min\{p,2\})}}{\displaystyle n^{ \frac{1}{2}+(1/\max\{p,2\})}}.
		$$
		Now, for the case $2\leq r<\infty$, we establish the upper bound of $A(B_{{\ell}_p^n},\ell_r \hookrightarrow \ell_q,\bm{\lambda})$. Let $s:=(s_1,\dots,s_n)\in\mathbb{R}_{\geq 0}^n$ such that 
		$$
		\sum_{k=1}^n \|\mathcal{V}(c_k(P))\|_q ~ s_k \leq \sup_{z\in B_{\ell_p^n}} \big\|P(z)\big\|_r, \ \mbox{ for every } P\in \mathcal{P}(\prescript{1}{}{\ell_p^n}, \ell_r^n)
		$$
		where $\mathcal{V}: \ell_r \hookrightarrow \ell_q$ with $2\leq r\leq \infty$. Then we have
		
		$$
		\sum_{k=1}^{n} s_k=\sum_{k=1}^{n} s_k \|e_k\|_q \leq \sup_{z\in B_{{\ell}_p^n}}\bigg\|\sum_{k=1}^{n}z_k e_k\bigg\|_r
		\leq \sup_{z\in B_{{\ell}_p^n}}\|z\|_r.
		$$
		Now, we obtain
		$$
		A(\mathcal{P}(\prescript{1}{}{\ell_p^n}, \ell_r^n),\ell_r \hookrightarrow \ell_q,\bm{\lambda})\leq \begin{cases}\vspace{0.1cm}
			\cfrac{\bm{\lambda}}{n^{1-\frac{1}{p}}},
			& \text{if $p\leq r$,}\\\vspace{0.2cm}
			\cfrac{\bm{\lambda}}{n^{1-\frac{1}{r}}},
			& \text{if $r<p$}.
		\end{cases}
		$$
		Since
		$$
		A(B_{{\ell}_p^n},\ell_r \hookrightarrow \ell_q,\bm{\lambda})\leq A(\mathcal{P}(\prescript{1}{}{\ell_p^n}, \ell_r^n),\ell_r \hookrightarrow \ell_q,\bm{\lambda}).
		$$
		Hence we have the lower bound.
	\end{proof}
	\begin{remark}
		We can use Theorem \ref{Aembed} and Lemma \ref{KMthe1.4} together as a tool to find the upper estimate of $K(B_{{\ell}_p^n},\ell_r \hookrightarrow \ell_q,\bm{\lambda})$ in Theorem \ref{Bimbe}. 
	\end{remark}
	\begin{proof}[Proof of Theorem \ref{Aoperator}] The lower bound obtain from Theorem \ref{th1.2} and Lemma \ref{KMthe1.4}, and the upper bound is from Theorem \ref{Aembed}.
	\end{proof}

	\bigskip
	\noindent
	{\bf Acknowledgement.} 
	The work of the first author is supported by the institute post doctoral fellowship of NISER Bhubaneswar, India. He thanks the National Institute of Science Education and Research Bhubaneswar, for providing an excellent research facility. The second author is thankful for the research grants (DST/INSPIRE/04/2019/001914).

	\medskip
	\noindent

\end{document}